\documentclass[reqno,10pt,centertags]{amsart}
\usepackage{amsmath,amsthm,amscd,amssymb,latexsym,upref,enumerate,mathtools,mathrsfs}
\usepackage{bbm}
\usepackage{nicefrac}
\usepackage{esint}
\usepackage[usenames,dvipsnames,svgnames,table]{xcolor}

\usepackage{hyperref}

\usepackage{tikz}

\newcommand*{\mailto}[1]{\href{mailto:#1}{\nolinkurl{#1}}}
\newcommand{\arxiv}[1]{\href{http://arxiv.org/abs/#1}{arXiv:#1}}

\newcommand{\bbC}{{\mathbb{C}}}

\newcommand{\bbN}{{\mathbb{N}}}

\newcommand{\bbR}{{\mathbb{R}}}

\newcommand{\bbZ}{{\mathbb{Z}}}

\newcommand{\cB}{{\mathcal B}}

\newcommand{\cH}{{\mathcal H}}

\newcommand{\cM}{{\mathcal M}}

\DeclareMathOperator{\ran}{ran}
\DeclareMathOperator{\dom}{dom}

\DeclareMathOperator{\Pf}{Pf}
\DeclareMathOperator{\sech}{sech}

\DeclareMathOperator{\KdV}{KdV}

\DeclareMathOperator{\mKdV}{mKdV}

\renewcommand{\Im}{\text{\rm Im}}
\renewcommand{\ln}{\text{\rm ln}}

\newcommand{\ind}{\operatorname{index}}
\newcommand{\no}{\notag}
\newcommand{\lb}{\label}
\newcommand{\f}{\cfrac}

\newcommand{\ol}{\overline}

\newcommand{\wti}{\widetilde}
\newcommand{\Oh}{O}
\newcommand{\oh}{o}
\newcommand{\hatt}{\widehat}
\newcommand{\dott}{\,\cdot\,}

\renewcommand{\dot}{\overset{\textbf{\Large.}}}
\renewcommand{\ddot}{\overset{\textbf{\Large..}}}
\renewcommand{\dotplus}{\overset{\textbf{\Large.}} +}

\newcommand{\bi}{\bibitem}

\renewcommand{\ge}{\geqslant}
\renewcommand{\le}{\leqslant}

\let\geq\geqslant
\let\leq\leqslant

\makeatletter
\def\theequation{\@arabic\c@equation}

\newcommand{\msF}{\mathscr{F}} 
\newcommand{\mCC}{\mathcal{C}} 
\newcommand{\CH}{\mathcal{H}} 
\newcommand{\CL}{\mathcal{L}} 
\newcommand{\tmu}{\tilde{\mu}} 
\newcommand{\tCC}{\tilde{\mCC}} 

\newcommand{\CC}{\mathbb{C}} 
\newcommand{\RR}{\mathbb{R}} 
\newcommand{\BSS}{\mathbb{S}} 
\newcommand{\dW}{\dot{W}} 

\newcommand{\lf}{\left}
\newcommand{\rt}{\right}
\newcommand{\tit}{\textit}
\newcommand{\tbf}{\textbf}

\newcommand{\de}{\delta}
\newcommand{\De}{\Delta}

\newcommand{\Om}{\Omega}
\newcommand{\nab}{\nabla}
\newcommand{\la}{\lambda}
\newcommand{\ga}{\gamma}
\newcommand{\sg}{\sigma}
\newcommand{\Sg}{\Sigma}
\newcommand{\p}{\partial}

\newcommand{\tr}{\rm{tr}}

\allowdisplaybreaks
\numberwithin{equation}{section}

\newtheorem{theorem}{Theorem}[section]

\newtheorem{lemma}[theorem]{Lemma}

\newtheorem{hypothesis}[theorem]{Hypothesis}

\theoremstyle{remark}
\newtheorem{remark}[theorem]{Remark}

\numberwithin{equation}{section}

\begin{document}

\title[Applications of a Commutation Formula II]{Applications of a Commutation Formula II}

\dedicatory{Dedicated, with admiration, to Barry Simon, mentor and friend, \\ on the occasion of his 80th birthday.}

\author[P.\ A.\ Deift]{Percy A.\  Deift}
 \address{Department of Mathematics, 
Courant Institute of Mathematical Sciences, 
New York University, 251 Mercer Str., New York, NY 10012, USA}
\email{\mailto{deift@cims.nyu.edu}}
\urladdr{\url{http://math.nyu.edu/faculty/deift/}}

\author[F.\ Gesztesy]{Fritz Gesztesy}
\address{Department of Mathematics,
Baylor University, Sid Richardson Bldg., 1410 S.\,4th Street, Waco, TX 76706, USA}
\email{\mailto{Fritz\_Gesztesy@baylor.edu}}
\urladdr{\url{https://math.artsandsciences.baylor.edu/person/fritz-gesztesy-phd}}

\date{\today}


\subjclass[2020]{Primary: 15B52, 34L25, 37K10, 47A57; Secondary: 47A08,  47A40.}
\keywords{Commutation formulas, integrable operators and systems, KdV and mKdV, Miura transformation, regularization of pdes, operator estimates, Poncelet's porism, KPZ equation, Pfaffians, Gross--Pitaevskii equation, one-dimensional scattering theory, Floquet theory.}

\begin{abstract}
The principal {\it commutation formula} underlying the paper by Deift \cite{De78}, is of the type 
\[
- z (AB - z I)^{-1} + A (BA - z I)^{-1} B = I, \quad z \in [\rho(AB) \cap \rho(BA)]\backslash\{0\}, 
\]
where, temporarily, and for simplicity only, $A,B$ represent bounded operators in a Banach space. A fundamental consequence of this commutation formula is the fact that $AB$ and $BA$ are {\it essentially isospecral} in the sense that 
\[   
\sigma(AB) \backslash \{0\} = \sigma(BA) \backslash \{0\}. 
\]
Moreover, under appropriate trace class hypotheses, this implies the fundamental identity for (Fredholm) determinants
\[
\det(I - AB) = \det(I - BA),
\]
which extends to modified (or regularized) Fredholm determinants. 

In this paper we briefly recall some of the applications of these commutation formulas to isospectrality, integrable systems, regularization of PDEs, and estimates in Statistical Mechanics and Quantum Field Theory discussed in \cite{De78}. Subsequently, we review some of the ramifications of commutation formulas that were established over the past fifty years, including determinants and dimension reduction, random matrix theory and dimension expansion, integrable operators, Poncelet's porism, interpolation, eigenvalue computation, KPZ, Pfaffians, the focusing Gross--Pitaevskii equation, one-dimensional scattering theory, Floquet theory, and the Miura transformation connecting the $\KdV$ and $\mKdV$ hierarchies. 
\end{abstract}

\maketitle

{\scriptsize{\tableofcontents}}

\section{Introduction} \lb{s1}

In linear algebra, the Woodbury matrix identity -- named after M.\ A.\ 
Woodbury -- asserts that
\begin{equation}\label{1.1}
\lf(W+UCV\rt)^{-1} = W^{-1}-W^{-1} U \lf(C^{-1} + VW^{-1}\,U\rt)^{-1} \, V \, W^{-1}, 
\end{equation}
where $W,\, U,\, C$ and $V$ are conformable matrices:  $W$ is $n\times n$, $C$ is $k\times k$,
$U$ is $n\times k$, and $V$ is $k\times n$.

In the case that the dimension of $C$ is small compared to the dimension of $W$, the identity
\eqref{1.1} allows cheap computation of inverses and solutions to linear equations.  It is this 
feature that makes the identity useful in the theory and applications of Kalman filter theory, which
in turn famously plays an outstanding role in all manner of problems in control theory.

In the 1970's, while working on his Ph.D. thesis in scattering theory, one of us (P.D.)  came across the following
\tit{commutation formula,}
\begin{equation}\label{1.2}
- z (AB - z I)^{-1} + A (BA - z I)^{-1} B = I, \quad z \in [\rho(AB) \cap \rho(BA)]\backslash\{0\}, 
\end{equation}
which can be obtained from \eqref{1.1} by setting $W=I$, $C=I$, $U=A$ and $V=-B/z$.
One observes that \eqref{1.1} can be recovered in turn from \eqref{1.2} by setting
$A \to W^{-1} A$, $B \to CB$, and then $A\to U$, $B\to V$ and $z \to -1$.

The commutation formula extends easily to bounded operators
$A \in \cB (X_1, X_2)$ and $B \in \cB (X_2, X_1)$ where $X_1$ and $X_2$ are Banach
spaces, and also to certain unbounded (densely defined, closed) operators
$A$ in a Hilbert space with $B=A^*$. This is discussed in detail in Subsections~\ref{sec12} and \ref{sec13}. 

One of the fundamental consequences of the commutation formula \eqref{1.2} results in the {\it essential isospectrality} of $AB$ and $BA$ in the form,
\begin{equation}   
\sigma(AB) \backslash \{0\} = \sigma(BA) \backslash \{0\}.     \lb{1.3} 
\end{equation}

In \cite{De78}, one of us (P.D.) presented a variety of applications of \eqref{1.2} to problems
in different areas of mathematics and physics. These applications addressed a variety of problems that we turn to  next: We start with  the notion of {\bf essential isospectrality} (subsection~\ref{sec1}), possible by the fact that the point $0$ is excluded in \eqref{1.3}, leading to the construction of the class of reflectionless potentials for one-dimensional (short-range) Schr\"odinger operators supporting precisely $N \in \bbN$ strictly negative and simple eigenvalues. In turn, this class of reflectionless potentials gives rise to the class of $N$-soliton solutions of the Korteweg--de Vries ($\KdV$) hierarchy of nonlinear evolution equations, an {\bf infinite-dimensional integrable system} (subsection~\ref{sec2}). We also recall the process of {\bf regularizing PDEs} (subsection~\ref{sec3}), which, in the particular case of acoustic scattering in nonhomogeneous media, avoids smoothness assumptions on the speed of sound and the density coefficient modeling the underlying medium. Commutation formulas also enter {\bf operator estimates in statistical mechanics and quantum field theory} (subsection\ref{sec4}) in connection with Segal's Lemma and the Golden--Thompson inequality. These applications are all briefly recalled in Section~\ref{s2}.

In Section~\ref{s3} we describe a panorama of applications of the commutation formula \eqref{1.2} over the past fifty years. We start with {\bf determinants and dimension reduction} (subsection~\ref{sec5}), noticing that if $AB$ and $BA$ are trace class, and if $A \in \cB(\cH_1, \cH_2)$ and $B \in \cB(\cH_2, \cH_1)$, then the fundamental (Fredholm) determinant formula
\begin{equation}\label{1.4a}
{\det}_{\cH_2} \lf(I-AB \rt) = {\det}_{\cH_1} \lf(I-BA\rt), 
\end{equation}
holds. Here we explicitly indicated the different Hilbert spaces $\cH_1$ and $\cH_2$, and hence, possibly the different underlying dimensions involved. Analogous formulas hold in the context of modified (or regularized) Fredholm determinants. This yields an effective analysis of outliers for small low rank perturbations of independent identically (i.i.d) matrices. 

On the contrary, {\bf random matrix theory and dimension expansion} (subsection~\ref{sec6}) are natural in connection with the limit $N \to \infty$ of invariant ensembles of random $N \times N$ Hermitian matrices. 

For background on the notions of traces and infinite determinants, see, for instance, \cite[Chs.~IV, VI, IX]{GGK00}, \cite[Ch.~IV]{GK69}, \cite[Sect.~XIII.17]{RS78}, \cite[Chs.~3, 9]{Si05}, \cite[Ch.~3]{Si15}.

Next, we discuss the notion of {\bf Integrable Operators} (subsection~\ref{sec7}) of the form
\begin{equation}\label{1.5}
K(z,z')=  \Bigg[ \sum^N_{j=1} f_j(z)\, g_j(z') \Bigg]\Bigg{/}(z-z'), \quad z, z' \in \Sg, 
\end{equation}
where $\Sg$ is an oriented contour in $\CC$, and $K$ acts in $L^2 \lf(\Sg, |dz|\rt)$. This class of operators was  identified by Its, Izergin, Korepin, and Slavnov \cite{IIKS90} in 1990. One then infers that also $R=(I-K)^{-1} - I$ is an integrable operator with kernel
\begin{align}
\begin{split} 
& R(z,z') = \Bigg(\sum^N_{j=1} \, F_j (z) \, G_j(z') \Bigg) \Bigg{/} (z-z'),   \\
& \quad \text{where } F_j = (I-K)^{-1} f_j, \quad G_j= \big(I-K^\top\big)^{-1} g_j, \quad
1\le j \le N.
\end{split}
\end{align}
Remarkably, $F_j$, $G_j$, and hence the inverse $(I-K)^{-1}$ of the integrable
operator $K$ can be computed in terms of a {\it canonical auxiliary Riemann--Hilbert Problem} (RHP), combined with commutation. 

{\bf Poncelet's porism}\footnote{A type of proposition bridging the gap between a problem and a theorem, typically  describing a situation where a geometric construction is possible not just once, but in an infinite number of ways.} (subsection~\ref{sec8}), states that whenever a polygon is inscribed in one conic section and circumscribes another one, the polygon must be part of an infinite family of polygons with the same property. An elegant generalization of Poncelet's porism, due to Gibson, Saldanha, and Tomei \cite{GST24}, involves commutation in a fundamental way.

Certain {\bf Interpolation estimates} (subsection~\ref{sec9}) can be obtained via commutation, bypassing complex interpolation methods.

{\bf Eigenvalue Computations} (subsection~\ref{sec10}) employing the $QR$ algorithm, naturally exploit commutation.

The {\bf Kardar--Parisi--Zhang (KPZ) Equation} (subsection~\ref{sec11}), a foundational, nonlinear stochastic partial differential equation, introduced in 1986, models how random interfaces (such as, burning paper or crystal growth) grow and fluctuate over time. In this context, Amir, Corwin and Quastel \cite{ACQ11} analyzed the free energy of the continuum directed random polymer in $1+1$ dimensions -- once more this analysis uses commutation in a prominent manner, together with an associate RHP. Moreover, we recall that Cafasso and Claeys \cite{CC22} used the same RHP to evaluate precise lower tail asymptotics for the Cole--Hopf solution of the KPZ equation with narrow wedge initial data. In this context we also mention the specific reduction of an operator-valued RHP by Its, Bothner, Simon, and Kozlowsky \cite{IBSK26}, when they consider the Emptiness Formation Probability in the one-dimensional  impenetrable Bose gas. In particular, they use commutation $AB \to BA$ to reduce the operator-valued RHP in  \cite{IBSK26} to a standard $2\times 2$ RHP associated with the deformed sine-kernel integrable operator. 

{\bf Pfaffians, especially, Fredholm Pfaffians} (subsection~\ref{sec3.8}) have found applications in a great variety of problems in mathematics and mathematical physics. In the case of a skew-symmetric $n \times n$ matrix $A$ there exists a function, $\Pf(A)$, the Pfaffian of $A$, which is a polynomial in the entries $A_{j,k}$ of $A$, such that Cayley's formula  
\begin{equation}
\Pf(A)^2 = \det(A)    \lb{1.7A} 
\end{equation}   
holds. An extension of this concept to infinite dimensions can proceed as follows: given a measure space $(X;d\mu)$, let 
\begin{equation}
K(x,y) = \begin{pmatrix} K_{1,1}(x,y) & K_{1,2}(x,y) \\ K_{2,1}(x,y) & K_{2,2}(x,y) \end{pmatrix}, \quad x, y \in X,
\end{equation}
be a $2 \times 2$ matrix-valued skew-symmetric kernel that induces an integral operator on $L^2(X; d\mu) \times L^2(X; d\mu)$, in the standard manner, 
\begin{align}
\begin{split} 
& (K f)(x) = \int_{X} K(y,x) f(y) \, dmu(y), \quad \text{for a.e.~$x \in X$,}    \\ 
& f(\dott) = (f_1(\dott), f_2(\dott))^{\top} \in L^2(X; d\mu) \times L^2(X; d\mu). 
\end{split} 
\end{align}
In this context skew-symmetry means that 
\begin{align}
& K_{1,1}(x,y) = - K_{1,1}(y,x), \; K_{2,2}(x,y) = - K_{2,2}(y,x), \; K_{1,2}(x,y) = - K_{2,1}(y,x), \no \\
& \hspace*{8.5cm} \text{for a.e.~$x, y \in X$.}     \lb{1.10A} 
\end{align}
Introducing the operator 
\begin{equation}
(J f)(\dott) = (f_2(\dott), - f_1(\dott))^{\top}, \quad f = (f_1, f_2)^\top \in L^2(X; d\mu) \times L^2(X; d\mu),
\end{equation}
and assuming that $K$ is a trace class operator on $L^2(X; d\mu) \times L^2(X; d\mu)$, the expression 
\begin{equation}
\Pf(J-K) := I + \sum_{\ell=1}^{\infty} \f{(-1)^{\ell}}{\ell!} \int_{X^{\ell}} \Pf\big((K(x_j,x_k))_{j,k=1}^{\ell}\big) \, \prod_{j=1}^{\ell} d\mu(x_j)
\end{equation}
defines the {\it Fredholm Pfaffian of $K$}, were $\Pf(\dott)$ under the integral denotes the ordinary Pfaffian of the $2 \ell \times 2 \ell$ skew-symmetric matrix $(K(x_j,x_k))_{j,k=1}^{\ell}$.  

Direct computation (see \cite{OQR17}) then yields 
\begin{equation}
(\Pf(J-K))^2 = \det(I - M),     \lb{1.13A}
\end{equation}
where $\det(I - M)$ denotes the Fredholm determinant of the trace class operator 
\begin{equation}
M = \begin{pmatrix} - K_{2,1}(x,y) & -K_{2,2}(x,y) \\ K_{1,1}(x,y) & K_{1,2}(x,y) \end{pmatrix}.    \lb{1.14A} 
\end{equation}
Formula \eqref{1.13A} represents the analog of Cayley's formula \eqref{1.7A}. Subsection~\ref{sec3.8} then continues to make connections between $\det(I-M)$ and a canonical Riemann--Hilbert problem (RHP) in the context of the so-called symplectic derived class, following Bothner and Jaconelli \cite{BJ25}. Additional simplifications, leading to a RHP of Zakharov--Shabat-type, are briefly discussed.

In connection with $\det(I-M)$, and appropriate factorizations of $M$, the commutation determinant formula $\det(I - AB) = \det(I - BA)$ enters at a crucial point. 

{\bf The initial boundary value problem for the focusing Gross--Pitaevskii equation} (subsection~\ref{sec3.9}), is concerned with 
solutions to the focusing Gross--Pitaevskii equation with a delta potential supported at the origin,
\begin{equation}
\begin{cases} i u_t + 2^{-1} u_{xx} + |u|^2 u + q \delta_0(\dott) u = 0,  \quad q \in \bbR,    \\
u(x,0) = v_{\lambda} (x) + w(x); \quad (x,t) \in \bbR^2,
\end{cases}     \lb{1.15A}
\end{equation}
where $|q|$ is small and $w(\dott)$ is even and of order $\Oh(q)$. Following the treatment by Holmer and Zworski \cite{HZ09}, $v_{\lambda}(\dott)$ has the special form,
\begin{equation}
v_{\lambda}(x) = \lambda \sech\big(\lambda |x| + \tanh^{-1}(q/\lambda)\big), \quad \lambda > |q|,
\end{equation}
and corresponds to the nonlinear ground state of a condensate variational problem in one dimension. Associated with $v_{\lambda}$ one has the stationary solution
\begin{equation}
u_{\lambda}(x,t) = e^{i \lambda^2 t/2} v_{\lambda}(x), \quad (x,t) \in \bbR^2,
\end{equation}
for \eqref{1.15A} corresponding to $w \equiv 0$. The main result in \cite{HZ09} concerns the stability of this ground state condensate under even perturbations $w= \Oh(q)$, $q \ll 1$. In the case where $u(x,0)$, and hence $u(x,t)$ is even in $x$, \eqref{1.15A} reduces to the initial boundary value problem (IBV) for the focusing NLS equation on the half-line,
\begin{equation}
\begin{cases}
i u_t + 2^{-1} u_{xx} + |u|^2 u = 0, &(x,t) \in (0,\infty)^2, \\
u(x,0) = u_0(x), &x \in (0,\infty),   \\
u_x(0,t) + q u(0,t) = 0, &t \in (0,\infty), \, \text{a Robin boundary condition at $x=0$.}
\end{cases}     \lb{1.18A} 
\end{equation}

As observed by A.\ Fokas \cite{Fo89}, \cite{Fo02}, the IBV \eqref{1.18A} is ``integrable'' in the sense that it can be solved by using linear equations only, see \cite{IS13}. Taking a different approach, Deift and Park \cite{DP11} utilize remarkable computations of Bikbaev, Khabibullin, and Tarasov to introduce a nonlinear method of images which extends the IBV problem in a quadrant to a problem on the full line to which the familiar Riemann--Hilbert/steepest descent method for the longtime behavior of NLS applies. 

In the context of isospectral deformations of the NLS Lax operator, the extreme case of ``auto-commutation,'' where $AB \longrightarrow BA=AB$, enters naturally.

The subsections {\bf A Survey of Commutation Formulas; The Banach Space Case} (subsection~\ref{sec12} and {\bf A Survey of Commutation Formulas; The Hilbert Space Case} (subsection~\ref{sec13} revisits commutation formulas for $AB$ and $BA$ from scratch in the Banach and Hilbert space setting, respectively, by making the connection with the $2 \times 2$ block operator matrix 
\begin{equation}
Q = \begin{pmatrix} 0 & B \\ A & 0 \end{pmatrix} 
\end{equation}
(sometimes called a supersymmetric charge in supersymmetric quantum mechanics). The identity  
\begin{equation}
Q^2 = \begin{pmatrix} BA & 0   \\   0 & AB \end{pmatrix} , 
\end{equation}
makes it plain that the (spectral) properties of operators $AB$, $BA$, and $Q$ are necessarily intimately intertwined, which is thoroughly exploited in these subsections.  

These abstract subsections are contrasted by a concrete situation, {\bf Applications to One-Dimensional Scattering Theory} (subsection~\ref{sec14}), in which $A$ in $L^2(\bbR)$ is of the type $A = (d/dx) + \phi$, with $\phi$ a real-valued, locally absolutely continuous function that approaches asymptotes $\phi_{\pm} \in \bbR$ sufficiently fast.  Hence, $B=A^* = - (d/dx) + \phi$, and 
The associated Schr\"odinger operators $H_j$, $j=1,2$, in $L^2(\bbR)$ are then given by
\begin{align}
& H_1 = A^*A = - \f{d^2}{dx^2} + V_1, \quad \dom(H_1) = H^2(\bbR),     \lb{1.9} \\
& H_2 = AA^* = - \f{d^2}{dx^2} + V_2, \quad \dom(H_2) = H^2(\bbR),     \lb{1.10} \\
& V_j(x) = \phi(x)^2 + (-1)^j \phi'(x), \quad x \in \bbR, \; j=1,2.    \lb{1.11}
\end{align}
The maps 
\begin{equation} 
\phi \mapsto V_j = \phi^2 + (-1)^j \phi', \quad j=1,2,     \lb{1.12} 
\end{equation}
represent the celebrated Miura transforms. Stationary scattering theory for $H_1, H_2$ and the Dirac-type operator $Q = \left(\begin{smallmatrix} 0 & A^* \\ A & 0 \end{smallmatrix}\right)$ is then developed in great detail. 

This is followed with another concrete subsection, {\bf Applications to Floquet Theory} (subsection~\ref{sec15}) which treats the case of one-dimensional periodic Schr\"o\-dinger operators $H_j$, $j=1,2$, and the periodic Dirac-type operator $Q$. In this context $A$ in $L^2(\bbR)$ is again of the type $A = (d/dx) + \phi$, with $\phi$ a real-valued, locally absolutely continuous function such that $\phi, \phi' \in L^{\infty}(\bbR)$ and for some $\omega \in (0,\infty)$, $\phi(x+\omega)=\phi(x)$, $x \in \bbR$, and $H_j$, $j=1,2$, and $Q$ are defined as in Subsection~\ref{sec14} above. We then develop Floquet and spectral theory for $H_j$, $j=1,2$, and $Q$ in detail.  

Our final subsection on {\bf Applications to the $\KdV$ and $\mKdV$ Hierarchy} (subsection~\ref{sec16}), then exploits the Miura transform \eqref{1.12} to connect solutions between the two hierarchies. After developing the $\KdV$ and $\mKdV$ hierarchies in detail, we show that for each $n \in \bbN_0$, 
\begin{equation}
\mKdV_n(\phi) = 0 \, \text{ implies } \, \KdV_n(V_j) = 0, \quad j=1,2.     \lb{1.13} 
\end{equation}
and conversely,
\begin{equation}
\KdV_n(V_1) = 0 \, \text{ implies } \, \KdV_n(V_2) = 0 \, \text{ and } \, \mKdV_n(\phi) = 0.    \lb{1.14} 
\end{equation}
Here, once again $V_j$, $j=1,2$, and $\phi$ are related via Miura's transformation \eqref{1.12}. 

In Section \ref{s4}, a brief epilogue, we comment on commutation methods and their mysterious connection to the uncertainty principle in quantum mechanics.

We necessarily had to choose a selection of results in Section~\ref{s3} and therefore left out a description of many very relevant results. The reader will find a host of applications of commutation formulas, for instance, in \cite{Ad94}, \cite{AU22}, \cite{AW25}, \cite{AC90}, \cite{Ar90}, \cite{AB87}, \cite{Ba86}, \cite{BL00}, \cite{BBW10}, \cite{DT79}, \cite{EF85}, \cite{Fl51}, \cite{Ge93}, \cite{GST96}, \cite{GS95}, \cite{GST96}, \cite{GT96}, \cite{GW93}, \cite{G-UKM10}, \cite{Gu20}, \cite{HKRV23}, \cite{KST12}, \cite{LP86}, \cite{Mc81}, \cite{Mc85}, \cite{Mc86}, \cite{Mc87}, \cite{Mc89}, \cite{MvM75}, \cite{Mo23}, \cite{Oh87}, \cite{Oh88}, \cite{Oh95}, \cite{Oh99}, \cite{OU06}, \cite{ONR06}, \cite{Pa03}, \cite[Ch.~5]{PT87}, \cite{Pu86}, \cite{Pu86a}, \cite{Pu87}, \cite{Ry22}, \cite[Sect.~1.1]{Sa71}, \cite{Sa01}, \cite{Sa17}, \cite[Chs.~6--8]{SSR13}, \cite{Sc23}, \cite{Sc03}, \cite{Sc10}, \cite{Si09}, \cite{Su85}, \cite{Su85a}, \cite{Su86}, \cite{Tr88}, and \cite{VS93}, and the references cited therein.

\medskip
\noindent 
{\bf Notation.} By $X_1 \dotplus X_2$ we denote the direct sum of complex Banach spaces $X_j$, $j=1,2$. The complex Banach space of (everywhere defined) bounded linear operators from $X$ to $Y$, $X, Y$ complex Banach spaces, will be denotted by $\cB(X,Y)$. 

The inner product in a separable, complex Hilbert space $\cH$ is denoted by $(\,\cdot\,,\,\cdot\,)$ and is assumed to be linear with respect to the second argument; the symbol $I$ denotes the identity operator in $\cH$.   If $T$ is a linear operator mapping (a subspace of) a Hilbert space into another, then $\dom(T)$ and $\ran(T)$ denote the domain and range of $T$, respectively.  The resolvent set and spectrum of a closed linear operator $T$ in $\cH$ are abbreviated by $\rho(T)$ and $\sigma(T)$, respectively.  The complex Banach space of bounded (resp., compact) linear operators on $\cH$ is denoted by $\cB(\cH)$ (resp., $\cB_{\infty}(\cH)$). For $p\in [1,\infty)$, the corresponding $\ell^p$-based trace ideals will be denoted by $\cB_p (\cH)$ with norms abbreviated by $\|\,\cdot\,\|_{\cB_p(\cH)}$. Finally, we use the short-hand notation $L^p((a,b)) = L^p((a,b); dt)$ whenever Lebesgue measure is understood.

\section{A Brief Look Back at \cite{De78}} \lb{s2}

In this section we briefly recall some of the applications discussed in P.\ A.\ Deift, {\it Applications of a commutation formula} \cite{De78}.

\subsection{Essential Isospectrality}\label{sec1}
\eqref{1.2} holds in the sense that if $0 \neq z \in \rho(BA)$,
then also $z \in \rho(AB)$ and 
\begin{equation} 
-\frac{1}{z} \lf(I-A (BA- z I)^{-1} B\rt)
\end{equation} 
is the inverse of $AB - z I$.  Interchanging $A \leftrightarrow B$, one concludes that
\begin{equation}\label{eq3}
\sg (AB) \backslash   \{0\} = \sg(BA) \backslash  \{0\}, 
\end{equation}
and thus $AB \to BA$ provides a canonical \tit{isospectral action}.

\subsection{Integrable Systems}\label{sec2}
We consider the one-dimensional Schr\"{o}dinger operator $H= H_0 + q(\dott)$ in $L^2(\RR)$, with 
$H_0 = - (d^2/dx^2)$, $\dom(H_0) = H^2(\bbR)$. 
If $H \varphi = 0$ for some positive function $\varphi > 0$, with $\varphi, \varphi' \in AC_{loc}(\bbR)$ (the locally absolutely continuous functions on $\bbR$), then $H$ can be factored into 
\begin{equation} 
H=-\frac{1}{\varphi} \frac{d}{dx} \varphi^2\frac{d}{dx} \frac{1}{\varphi}=BA = H_0 + q(\dott), 
\end{equation} 
where $A= \varphi \frac{d}{dx} \frac{1}{\varphi}$ and $B=A^*=-\frac{1}{\varphi} 
\frac{d}{dx} \varphi$. If $0\neq \lambda \in \sigma(H)$, then by \eqref{eq3},
$\lambda \in \sg\big(\widetilde{H}\big)$, where
\begin{equation} 
\widetilde{H} =AB = H_0 + q(\dott) - 2 \bigg(\frac{d^2}{dx^2} \log(\varphi(\dott))\bigg) 
= H - 2 \bigg(\frac{d^2}{dx^2} \log(\varphi(\dott))\bigg).
\end{equation} 
However, by \eqref{eq3} one notes that there is a loophole, namely, it is possible that 
$0\in \rho(H)$ but $0\in \sg (\widetilde{H})$.  Repeated iterations of this loophole makes it
possible to construct explicitly the class of reflectionless potentials $q_N$ supporting precisely $N \in \bbN_0$ strictly negative and simple eigenvalues $- \kappa_j^2 \in (-\infty,0)$, $1 \leq j \leq N$, for $H_N = H_0 + q_N(\dott)$ in $L^2(\bbR^n)$, with $q_N$ of the explicit form, 
\begin{equation}
q_N(x) = \begin{cases} - 2 \cfrac{d^2}{dx^2} \ln({\det}_{\bbC^N} (I_N + C_N(x))), &N \in \bbN,   \\
0, & N=0, 
\end{cases}  \quad x \in \bbR,    \lb{3.1a}
\end{equation}
where $I_N$ denotes the identity matrix in $\bbC^N$, $N \in \bbN$, and 
\begin{align} 
\begin{split} 
&C_N(x) = (C_{N,j,\ell}(x))_{1 \leq j,\ell \leq N}, \quad C_{N,j,\ell}(x) = Cc_{\ell} (\kappa_j+\kappa_{\ell})^{-1} 
e^{-(\kappa_j+\kappa_{\ell})x}     \lb{3.2} \\
& c_j, \kappa_j  \in (0,\infty), \; \kappa_j \neq \kappa_{\ell} \, \text{ for } \, j \neq \ell, \; 1 \leq j, \ell \leq N, \; N\in \bbN, \; x \in \bbR.
\end{split} 
\end{align}
An appropriate $t$-dependence of $c_j$, $1 \leq j \leq N$, then yields $N$-soliton solutions for the $\KdV$ hierarchy, starting from $H_0 =-d^2/dx^2$. Indeed, replacing $c_j = c_j(0)$ by 
\begin{equation}
c_j(t) = c_j e^{\mp \sum_{m=0}^n c_{n-m} (-1)^m (-\lambda_j)^{m+(1/2)} t}, \quad t \in \bbR, \; 1 \leq j \leq N,    
\lb{8.1} 
\end{equation}
in \eqref{3.2}, gives rise to 
\begin{equation}
q_N(x,t) = - 2 \cfrac{d^2}{dx^2} \ln({\det}_{\bbC^N} (I_N + C_{N,\pm}(x,t))), \quad N \in \bbN, \; (x,t) \in \bbR^2,    \lb{8.2}
\end{equation}
satisfying\begin{equation}
\KdV_n(q_N) =0. 
\end{equation}
The family $q_N$, $N \in \bbN$, represents the celebrated {\it $N$-soliton $\KdV_n$ solutions}. 
Here the integration constants $c_0=1$, $c_{\ell} \in \bbR$, $\ell = 1,\dots,n$, are inherited from the $n$th $\KdV$ equation (cf.\  \eqref{1.2.20}, \eqref{1.2.22}). 

Using two commutations in succession, one can also insert Wigner--von Neumann eigenvalues $\lambda > 0$ into the continuous spectrum.

\subsection{Regularization of PDEs}\label{sec3}
The acoustic operator $H_b$ in $L^2(\RR^n)$ is given by $H_b=-\nab \cdot b^2\,\nab =
(b\nabla)^*\, b\nab$ where $b(x)$ is a strictly positive definite matrix satisfying
$c_1\le b(x)\le c_2$ for some $0<c_1 < c_2 < \infty$.  In the scattering theory for the 
acoustic equation 
\begin{equation} 
\ddot{u} + H_b \, u=0, 
\end{equation} 
one assumes that $b(x) \to I$ at a suitable rate as $|x| \to\infty$.  For unitarity of
the scattering operator in the trace class one needs that
\begin{equation}\label{eq4}
R\equiv (H_b+I)^{-1} - (-\De + I)^{-1}
\end{equation}
lies in an appropriate Schatten class $\mathcal{B}_p \lf(L^2\lf(\RR^n\rt) \rt)$ for some
$p \in [1,\infty)$.
Naive resolvent estimates using the  resolvent identity require some smoothness on $b(x)$, but
by \tit{commutation} one has
\begin{align}
\lf(H_b +I\rt)^{-1} = \lf((b\nab)^{*}\, b\nab +I \rt)^{-1} 
&=I -\nab^*b \lf(b\nab\,\nab^* b+I\rt)^{-1} \, b\nab \notag\\
&= I - \nab^* \lf(\nab \nab^* + b^{-2} I \rt)^{-1}\, \nab.  \label{eq5}
\end{align}
Inserting \eqref{eq5} into \eqref{eq4}, one finds
\begin{align}
R &= \nab^* \lf[ \lf(\nab\nab^* + I \rt)^{-1} - \lf(\nab\nab^* + b^{-2} I\rt) \rt] \nab\\
&= \nab^* \lf[ \lf( \nab\nab^* +I\rt)^{-1} \lf(b^{-2} I - I\rt) \lf(\nab\nab^*+b^{-2} I\rt)\rt]\nab,
\end{align}
and so commutation reduces $H_b$ to a perturbation of $H=-\nab$ by a 
\tit{potential} term $\lf[b^{-2}(x)-1]\rt] I$.  This \tit{regularizes} the acoustic
scattering theory, allowing for scattering in rough environments without any smoothness
assumptions on $b(x)$.

\subsection{Operator Estimates in Statistical Mechanics and Quantum Field Theory}\label{sec4}
One can use commutation to prove certain basic estimates in statistical mechanics and quantum field theory, without
using the machinery of complex interpolation theory.  For example,
\begin{itemize}
\item[$(i)$] \tbf{Segal's Lemma.}  
\tit{
Let $A$ and $B$ be semi-bounded, self-adjoint
operators in a Hilbert space $\CH$ such that $A+B$ is essentially self-adjoint on
 $D(A) \cap D(B)$. Then 
\begin{equation} 
\| e^{-(A+B)}\|\quad \le\quad \|e^{-A/2}\;e^{-B}\,e^{-A/2}\|\quad \le \quad \|e^{-B}\;e^{-A}\|.
\end{equation} 
}
\item[$(ii)$] \tbf{The Golden--Thompson inequality.} \tit{Let $A$ and $B$ be as in Segal's Lemma
and assume that in addition $\CH$ is separable.  Then
\begin{equation} 
\tr \lf(e^{-(A+B)}\rt) \;\le\; \tr \lf(e^{-A/2}\,e^{-B}\, e^{-A/2}\rt).
\end{equation} 
In particular, if $e^{-A}\;$ is trace class, then $\;e^{-(A+B)}\;$ is trace class and
\begin{equation} 
\tr \lf(e^{-(A+B)} \rt) \;\le \; \tr \lf(e^{-A/2}\;\, e^{-B}\, e^{-A/2}\rt) \;\le\;
\tr \lf(e^{-B} \, e^{-A}\rt) \;< \; \infty.
\end{equation} 
}
\end{itemize} 

By way of illustration, the proof of Segal's Lemma using commutation, proceeds as follows: \\[1mm]
{\it Proof of Segal's Lemma.} For any nonnegative integer $n$, 
\begin{align}
& \Big\|\big[e^{- 2^{-n-1}A} e^{- 2^{-n}B} e^{- 2^{-n-1}A} \big]^{2n}\Big\| 
= \Big\|e^{- 2^{-n-1}A} e^{- 2^{-n}B} e^{- 2^{-n-1}A}\Big\|^{2n}   \no \\
& \quad = \big|\sup_{|\lambda| \in [0,\infty)} \big\{\lambda \in \sigma\big(e^{- 2^{-n-1}A} e^{- 2^{-n}B} e^{- 2^{-n-1}A}\big)\big\} \big|^{2n}     \no \\
& \quad = \big|\sup_{|\lambda| \in [0,\infty)} \big\{\lambda \in \sigma\big(e^{- 2^{-n}B} e^{- 2^{-n}A}\big)\big\} \big|^{2n}  \quad \text{(by commutation)}   \no \\
& \quad \leq \Big\|e^{- 2^{-n}B} e^{- 2^{-n}A}\Big\|^{2n}    \no \\
& \quad = \Big\|e^{- 2^{-n}A} e^{- 2^{-n+1}B} e^{- 2^{-n}A}\Big\|^{2n-1}  \quad \text{(since $\|T\|^2 = \|T^*T\|$)}    \no \\
& \quad = \Big\|\big[e^{- 2^{-n}A} e^{- 2^{-n+1}B} e^{- 2^{-n}A}\big]^{2n-1} \Big\| \leq \dots \leq 
\big\|e^{-2^{-1}A} e^{-B} e^{-2^{-1}A} \big\|   \no \\
& \quad = \sup_{|\lambda| \in [0,\infty)} \big\{\lambda \in \sigma\big(e^{- 2^{-1}A} e^{-B} e^{- 2^{-1}A}\big)\big\}    \no \\
& \quad = \sup_{|\lambda| \in [0,\infty)} \big\{\lambda \in \sigma\big(e^{-B} e^{- A}\big)\big\}  
 \quad \text{(by commutation)}   \no \\
& \quad \leq \big\|e^{-B}e^{-A}\big\|.
\end{align}
The result then follows from the Trotter product formula, that is, 
\begin{equation}
\big[e^{-2^{-n}A} e^{- 2^{-n+1}B} e^{-2^{-n}A}\big] 2^{n-1} \underset{n \to \infty}{\longrightarrow} e^{-(A + B)}
\end{equation}
in the strong operator topology. 

Similar arguments can be used to prove the Golden--Thompson inequality.

\section{A Variety of Developments in the Past Fifty Years} \lb{s3}

Many other applications of commutation have been recognized over the last fifty years 
and we will now describe a certain selection of them.

\subsection{Determinants and Dimension Reduction}\label{sec5}
If $\la \neq 0$ is an eigenvalue of $AB$, then by \eqref{eq3} it is also an eigenvalue of 
$BA$.  But more is true.  A simple calculation shows that if $\la$ is an isolated eigenvalue,
then the algebraic (as well as geometric) multiplicity of $\la$ for $AB$ is the same as the algebraic (as well as geometric) multiplicity of $\la$ for $BA$.  By Lidskii's theorem it follows that if either one of the bounded
operators $A,\;B$ is trace class (alternatively, if $AB$ and $BA$ are trace class), then we obtain a commutation proof of the well known identity 
\begin{equation}\label{1.4}
\det \lf(I+AB \rt) = \det \lf(I+BA\rt). 
\end{equation}
In linear algebra \eqref{1.4} is known as the \tit{Weinstein--Aronszajn identity} or sometimes the \tit{Sylvester determinant theorem}.
We note that the algebraic multiplicity for $\la=0$ can be different for $AB$
and $BA$, but this has no effect on the determinant.

Some 12 years or so ago, P.D. was giving a talk in which the commutation formula \eqref{1.2}
came up.  Somewhat tongue in cheek, he referred to the formula as ``the most important
identity in mathematics''.  Terry Tao was in the audience.  He took note and the next
day he showed how he could use \eqref{1.2}, and in particular \eqref{1.4}, to analyze
\tit{outliers} for small low rank perturbations of i.i.d.\ matrices, see \cite{Ta13}. .
For $X_n$ composed of i.i.d.\ entries $\lf((X_n)_{j,k}\rt)_{1 \le j,k \le n}\;$
one considers the characteristic polynomial equation
\begin{equation}\label{eq7}
\det \lf(n^{-1/2} \;X_n + A_n\, B_n-z I \rt) =0
\end{equation}
for some perturbation 
$\displaystyle{ \mathop{ \mathrm{A_n} }_{\begin{subarray}{c} \uparrow\\ n \times k 
\end{subarray}} }$, 
$\;\displaystyle{
\mathop{ \mathrm{B_n} }_{\begin{subarray}{c} \uparrow\\ k \times n \end{subarray}} }$.
By commutation, for $|z|$ sufficiently large, \eqref{eq7} reduces to 
\begin{equation}\label{eq8}
\det \lf(I + B_n \lf(n^{-1/2}\; X_n-z I \rt)^{-1} \, A_n \rt)=0.
\end{equation}
If $k$ is fixed, $\;k<n$, we see that the eigenvalue problem \eqref{eq7} in $\CC^n$
is \tit{reduced} to a problem in $\CC^k$ with $k$ fixed,  as $n$ is allowed to grow.
In particular if $A_n B_n=u_n\, v^*_n$ for vectors $u_n$, $v_n$ in $\CC^n$,
then \eqref{eq8} is reduced to a scalar problem 
\begin{equation} 
1+ \lf( \lf(n^{-1/2} \; X_n -z I \rt)^{-1}\, u_n,\; v_n\rt)=0, 
\end{equation} 
and the appearance, or non-appearance, of outliers can be analyzed effectively using 
elementary calculus.  The reduction \eqref{eq7}$\to$\eqref{eq8} is now standard
technology in random matrix theory to analyze outliers.

This is an example where commutation gives dimension reduction in a way  that is 
similar to the use of the Woodbury matrix identity in Kalman filter theory.

We will now encounter cases where commutation is used for dimension \tit{expansion}.

\subsection{Random Matrix Theory and Dimension Expansion}\label{sec6}
Invariant ensembles of random $N\times N$ Hermitian matrices $M$ have probability
distributions of the form 
\begin{equation} 
P_N(M)\,dM = \frac{1}{Z_N} \det \lf(w(M)\rt) dM,
\end{equation} 
where $w(x)\ge 0$ and $w(M)$ is given by the spectral calculus (see \cite{Me04}).
Here $dM$ is Lebesgue measure on the set $\cM$ of algebraically independent entries of $M=\lf\{M_{ij}\rt\}$
and $Z_N$ is the normalization coefficient.

If $g(x)$ is a bounded function on $\RR$, let 
\begin{equation} 
f(M) = \det \lf(I+g(M)\rt). 
\end{equation} 
The expected value of $f$
\begin{equation} 
\langle f \rangle = \int_{\cM} f(M) \, P_N (M)\, dM, 
\end{equation} 
is of special interest.  In particular if $\Om$ is a Borel  set in $\RR$
and $\chi_\Om(x)=1$ for $x \in \Om$ and $0$ otherwise,
then
\begin{equation} 
\langle f_\Om \rangle = \int_{\cM} \det \lf(I-\chi_\Om (M)\rt) P_N(M)\, dM,  \, \text{ and } \,  
g=-\chi_\Om
\end{equation} 
is the probability that there are no eigenvalues of $M$ in $\Om$.
Standard calculations is random matrix theory lead to the result that
\begin{equation}\label{eq9}
\langle f_\Om \rangle = \det \lf( \delta_{jk} + \int_{\cM} \phi_j(x)\, \phi_k(x)\, g(x)\, dx\rt)_{
0\le j,\;k\le N-1}
\end{equation}
where $\phi_j(x) = P_j(x) \lf(w(x)\rt)^{\frac{1}{2}}, \; 0 \le j\le N-1$
and $P_j(x) = \ga_j\, \chi^j + \dots, \; 0 \le j\le N-1, \; \ga_j>0$,
are the orthonormal polynomials with respect to the weight $w(x)\, dx$ on $\RR$.

\tit{Commutation comes into the analysis in the following way.}  We are interested in 
the situation where $N\to\infty$.  We see from \eqref{eq9} that $\langle f_\Om\rangle$
is expressed in terms of matrices of larger and larger size.  Limits of this kind are
generically very difficult to control.  However, fortunately, commutation can be used
to express \eqref{eq9} in terms of determinants on a \tit{fixed} space.  Such limits
are, generally speaking, easier to control.

We proceed as follows:

\noindent

Let $A:L^2 (\RR) \to \CC^N$ denote the bounded operator given by 
\begin{equation} 
(Ah)_j = \int_{\bbR} \phi_j(x)\, g(x)\, h(x)\,dx,\quad 0\le j\le N-1,\; h\in L^2(\RR)
\end{equation}  
and let $B: \CC^N \to L^2(\RR)$ denote the bounded operator given by 
\begin{equation} 
(Ba)(x) = \sum^{N-1}_{j=0} \phi_j(x)\, a_j, \quad a=\lf(a_0, a_1, \dots, a_{N-1}
\rt)^{\top} \in \CC^N,
\end{equation} 
then $AB$ maps $\CC^N \to \CC^N$, and for $a \in \CC^N$, one finds
\begin{equation} 
\lf((AB)a\rt)_j = \sum^{N-1}_{k=0} \lf(\int_{\bbR} \phi_j(x) \, \phi_k(x) \,g(x)\,dx\rt)a_k, 
\quad 0\le j \le N-1, 
\end{equation} 
and so by \eqref{eq9} one concludes that 
\begin{equation}\label{eq10}
\langle f_\Om \rangle = {\det}_{\bbC^N} \lf(I_{\CC^N} +  AB\rt).
\end{equation}
On the other hand, for $h\in L^2(\RR)$, one finds
\begin{align}
(BA\; h)(x) &= \int_{\bbR} K_N (x,y)\; g(y)\;h(y)\,dy, \\
\noalign{\hbox{where}}
K_n (x,y) &= \sum^{N-1}_{ j=0} \,\phi_j(x)\;\phi_j(y),
\end{align}
and so by \eqref{1.4},  
\begin{align} \label{eq11}
\begin{split} 
& \langle f\rangle_{\Om} = {\det}_{\bbC^N} \lf(I_{\CC^N} + AB\rt) = {\det}_{L^2(\bbR)} \lf(I_{L^2(\RR)} + BA\rt)
= {\det}_{L^2(\bbR)} \lf(I_{L^2(\RR)} + K_N \, \chi_g\rt),   \\
& \quad \text{where } \;\chi_g\, h = gh.
\end{split} 
\end{align}
Formula \eqref{eq11} is due to Gaudin--Mehta \cite[Appendix, A.16]{Me04} and the above calculations follow Tracy and Widom \cite{TW94}.

Thus we have an expression for $\langle f\rangle_{\Om}$ in terms of an operator on a
\tit{fixed} space.  Whereas in the previous example \eqref{eq7} $\to$ \eqref{eq8}, 
commutation converts the problem into a lower dimensional one, 
commutation now converts the problem to one in a fixed larger, in fact, infinite dimensional problem.
This is dimension \tit{expansion}.  To control the limit as $N\to \infty$, we have to show that $K_N\:g$ converges in the trace norm in $L^2(\RR)$.
This can be done in many cases of interest using the Riemann--Hilbert/steepest-descent
method, introduced by Deift-Zhou \cite{DZ92}, \cite{DZ93}.

For example, for potentials $w(x) = e^{-N\, V(x)}$, where $V(x)$ is real analytic 
and
\begin{equation} 
\frac{V(x)}{\log (1+x^2)} \to +\infty \, \text{ as } \, |x|\to\infty, 
\end{equation} 
the associated equilibrium measure has the form $d\mu(x) = \psi(x)\,dx$ for some
continuous function $\psi(x) \ge 0$.  In particular if $a\in \RR$ is a point for 
which $\psi(a)>0$, then after scaling, for $g=-X_{\lf(a,\;a+\frac{x}{N\,\psi(a)}\rt)}$, $x>0$, 
one has  
\begin{align}
&\quad \text{The probability that there are no eigenvalues in the interval}    \no \\
& \quad \Om_N= \lf(a, \; a+ \frac{s}{N \, \psi(a)}\rt) \, \text{ equals } \, \langle f_{\Om_N} \rangle =\det \lf(I-\widehat{K}_n\rt),  \\
& \quad \text{where } \widehat{K}_N \lf(\xi, \eta\rt) = \frac{1}{N\,\psi(a)} 
K_N \lf(a+\frac{\eta}{N\,\psi(a)}, \; q+\frac{\eta}{N\,\psi(a)}\rt) \text{ acts on } L^2(0,\eta).   \no
\end{align} 
Using the RH-steepest descent method, one finds as $N\to \infty$
\begin{equation} 
\langle f_{\Om_N} \rangle \to \det \lf(I-K_s\rt), 
\end{equation} 
where, after rescaling, $K_s$ is the trace class operator acting on $L^2((0,1))$ with kernel
\begin{equation} 
K_s(\xi,\xi') = \frac{\sin \, s(\xi - \xi')}{\pi (\xi - \xi')}, \quad \xi, \xi'\in [0,1]. 
\end{equation} 
$K_s$ is an example of a so-called \tit{Integrable Operator} which can in turn be analyzed by 
Riemann--Hilbert operators, as we describe next.

\subsection{Integrable Operators} \label{sec7}   
We refer to Deift \cite{De99} for background on this topic. 
${}$ \\[1mm] 
\tbf{Step 1.}  \tbf{What is an integrable operator?}
\tit{Let $\Sg$ be an oriented contour in $\CC$}. By convention, the $(\pm)$-side lies to the left (resp.\ right) side as one moves along the contour in the direction of the orientation (see Figure~1 below).
\begin{figure}[ht]
\centering
\begin{tikzpicture}[>=stealth, line width=0.8pt]

\draw (1.5,4) .. controls (2,3.2) and (3,2.7) .. (4.5,2.5);

\draw (0,2) .. controls (1,1.8) and (2,1.3) .. (3,.5);

\draw (.5,1) .. controls (1.5,2.4) and (2.3,3.2) .. (3.5,3.7);

\fill (2.4507,3.1177) circle (1.5pt);
\fill (1.0186,1.6926) circle (1.5pt);

\draw[->] (1.8781,3.5348) -- (1.9688,3.4563);
\draw[->] (3.1706,2.7972) -- (3.2850,2.7610);

\draw[->] (0.4857,1.8806) -- (0.6000,1.8440);
\draw[<-] (1.8466,1.2907) -- (1.9500,1.2298);

\draw[->] (0.7224,1.3064) -- (0.7946,1.4023);
\draw[->] (1.5590,2.3249) -- (1.6424,2.4112);
\draw[->] (3.0903,3.5112) -- (2.9850,3.4537);

\node at (1.9,3.85) {$+$};
\node at (1.55,3.55) {$-$};

\node at (2.35,1.265) {$-$};
\node at (2.05,0.85) {$+$};

\end{tikzpicture}
\caption{An oriented contour in $\bbC$.}
\end{figure}

We say that an operator $K$ acting in $L^2 \lf(\Sg, |dz|\rt)$ is \tit{integrable}
if it has a kernel of the form 
\begin{equation}\label{eq12}
K(z,z')=  \Bigg[ \sum^N_{j=1} f_j(z)\, g_j(z') \Bigg]\Bigg{/}(z-z'), \quad z, z' \in \Sg. 
\end{equation}
Such operators were singled out as a distinguished class by 
Its, Izergin, Korepin, and Slavnov in 1990, \cite{IIKS90}, and with appropriate conditions on $\Sg$, $f_i$, and $\,g_j$, $K\in \cB\lf(\,L^2(\Sg)\rt)$.

One notes that if $Ah(z) =zh(z)$ is the operator of multiplication by $z$, then the 
commutator of $A$ and $K$ is of the form 
\begin{equation} 
[A,K]=\sum^N_{j=1} \, f_j(z)\, g_j(z'). 
\end{equation} 
One has 
\begin{equation} 
\lf[A, (I-K)^{-1}\rt] = (I-K)^{-1} \,\lf[A, K\rt] (I-K)^{-1}, 
\end{equation} 
and one then concludes that $R=(I-K)^{-1} - I$ is also an integrable operator with kernel
\begin{align}
\begin{split} 
& R(z,z') = \Bigg(\sum^N_{j=1} \, F_j (z) \, G_j(z') \Bigg) \Bigg{/} (z-z'),   \\
& \quad \text{where } F_j = (I-K)^{-1} f_j, \quad G_j= \big(I-K^\top\big)^{-1} g_j, \quad
1\le j \le N,
\end{split}
\end{align}
where $K^\top$ denotes the real adjoint of $K$. 

\noindent
Remarkably, $F_j$, $G_j$, and hence the inverse $(I-K)^{-1}$ of the integrable
operator $K$ can be computed in terms of a \tit{canonical auxiliary Riemann--Hilbert Problem} (RHP), as follows. \\[1mm] 
\noindent
\tbf{Step 2.} \tbf{What is a RHP?}

\noindent
For $h\in L^2(\Sg)$, define the Cauchy operator with integration in the direction of the orientation, 
\begin{equation} 
(Ch)(z) = \frac{1}{2\pi i} \int_\Sg \, \frac{h(z')}{z'-z}\;dz',\quad z\in \CC \backslash  \Sg, 
\end{equation} 
and set 
\begin{equation} 
\lf( C_{\pm}\, h\rt) (z) = \lim_{\substack{
z'\to z\\
z' \in (\pm)-\text{side of} \, \Sg 
}}
\lf(C\, h\rt) (z'), \quad z\in \Sg.
\end{equation} 
This limit exists almost everywhere on $\Sg$ for (appropriate) contours.  Standard
computations show that
\begin{equation}\label{eq13}
C_{\pm} = \pm \frac{1}{2} I - \frac{1}{2} \:H,
\end{equation}
where $H$ is the the  Hilbert transform on $\Sg$, 
\begin{equation} 
(H h)(z)=\displaystyle{\lim_{\epsilon \downarrow 0}} \;\frac{1}{\pi i} 
\int_{\{z' \in \Sg \,|\, |z'-z| > \epsilon\}}
\:\frac{h(z)}{z-z'}\; dz', \quad z\in \Sg, 
\end{equation} 
and hence
\begin{equation}\label{eq14}
C_+- C_- = I \; ,\quad C_+ + C_- =-H.
\end{equation}
The Riemann--Hilbert Problem (RHP) $(\Sg, v)$ on $\Sg$ with \tit{jump matrix} 
$v:\Sg \to GL(k, \CC)$, where 
\begin{equation} 
v, v^{-1} \in L^\infty (\Sg), \, \text{ and } \, v(z) \to I \, \text{as} \, z\to\infty \text{ on } \Sigma,
\end{equation} 
consists of the following:  Show that there exists a (unique) $k\times k$ matrix function
$m=m(z)$ satisfying
\begin{equation}\label{eq15}
\begin{aligned}
&\bullet m(z) \text{ is analytic in } \CC/\Sg,    \\
&\bullet m_+(z) = m_-(z) v(z), \quad z\in \Sg,   \\
&\quad\text{where } m_\pm (z) = \lim_{\substack{
z' \to z  \\
z' \in (\pm)-\text{ side of } \Sg}}
\; m(z'), \\
&\bullet m(z) \to I \text{ as } z\to \infty.
\end{aligned} 
\end{equation}
The solution of the RHP follows from the solution of an associated singular integral operator equation 
on $\Sg$, as follows. For a given (pointwise) factorization of $v$ on $\Sigma$,   
\begin{equation} 
v=\lf(I-w_-\rt)^{-1} \, \lf(I + w_+\rt), 
\end{equation} 
define the operator 
\begin{equation}\label{eq16}
C_w \, h = C_+ (h\, w_-) + C_- (h\, w_+), \quad w=(w_+, w_-), 
\end{equation}
for $k\times k$ matrix-valued functions $h$ in $L^2 \lf(\Sg, |dz|\rt)$.  Now let
\begin{equation} 
\mu \in \, I + L^2 (\Sg)
\end{equation} 
be the solution of the singular integral equation on $\Sigma$, 
\begin{equation}\label{eq17}
\lf(I -C_w\rt)\,\mu = I, 
\end{equation}
and set
\begin{equation}\label{eq18}
m(z) \equiv I + C\lf( \mu \lf(w_+ + w_-\rt) \rt)(z), \quad z\in \CC \backslash  \Sg,
\end{equation}
where $C$ is the Cauchy operator on $\Sg$.  Then a straight forward computation using \eqref{eq14},
$C_+ - C_- =1$, shows that $m(z)$ \tit{solves the RHP} \eqref{eq15}. In particular, one finds that 
$m_{\pm}(z)=\mu(z)(I \pm w_{\pm})$ and so 
\begin{equation}
m_+(z) = m_-(z) (I-w_-)^{-1} (I + w_+) = m_-(z) v(z), \quad z \in \Sigma.
\end{equation}

\noindent
\tbf{Step 3.}  \tbf{Commutation}

\noindent
\tit{Commutation now enters} in the following way.  Let $K$ be an integrable operator as in \eqref{eq12}
and set $f= \lf(f_1, \dots, f_N\rt)^{\top}$, $g=\lf(g_1, \dots, g_N\rt)^{\top}$, and let $R_f$ denote the 
map of right multiplication by the column $N$-vector $f$, taking row $N$-vector 
functions to scalar functions
\begin{equation}\label{eq19}
\lf( R_f h \rt) = h(z) f(z) = \sum^N_{i=1}\, h_i(z) f_i(z), \quad h=\lf(h_1, \dots, h_N\rt). 
\end{equation}
Now let $R_{g^{\top}}$ denote the map of right multiplication by the row vector of $g^{\top}$
taking scalar functions to row $N$-vector functions
\begin{equation} 
\lf( R_{g^{\top}}\, h\rt) (z) = h(z) \, g^{\top}(z) = \lf(h(z)\, g_1(z), \dots, h(z)\, g_N(z) \rt).
\end{equation} 
Direct computation now shows that
\begin{equation}\label{eq20}
K h = \lf(D E\rt) h,
\end{equation}
where 
\begin{equation}\label{eq21}
D=R_f \text{ and } E=i\pi H R_g^{\top},
\end{equation}
with $H$ the Hilbert transform as above.

On the other hand, we find that $E D$ takes row $N$-vector functions $u$ to row $N$-vector 
functions,
\begin{equation} 
(E D)(u) = C_+ \lf(u \lf[ -i\pi f g^{\top}\rt]\rt) + C_- \lf(u \lf[-i\pi f g^{\top}\rt]\rt),
\end{equation} 
which is \tit{precisely the operator $C_w$ in \eqref{eq16}} associated with the solution of the 
$N\times N$ RHP $\lf(\Sg, v\rt)$, where $w_+ = w_- = -i \pi f g^{\top}$, and 
\begin{equation}\label{eq22}
v= \lf(I-w_-\rt)^{-1} \lf(I+w_+\rt) = I-\frac{2\pi i}{1+ i\pi \langle g,f\rangle}\, f  g^{\top}, 
\end{equation}
with 
\begin{equation} 
\langle g, f\rangle = \sum^N_{j=1} g_j(z) f_j(z).   \lb{3.44}
\end{equation} 

We now use commutation $DE \leftrightarrow ED$ to compute $F$ and $G$ in terms of the solution of the RHP 
$(\Sigma,v)$, where $v$ is given in \eqref{eq22}. First, we note from \eqref{eq14} and \eqref{eq21} that 
\begin{equation}
C_w(I) = C_+ \big(\big[-i\pi f g^{\top}\big]\big) + C_- \big(\big[-i\pi f g^{\top}\big]\big) = i \pi H \big(fg^{\top}\big).  \lb{3.45}
\end{equation}
One has 
\begin{align}
F &= (F_1,\dots,F_N)^{\top} = (I - K)^{-1} f     \no \\
&= f + R_f (I - C_w)^{-1} \big(i\pi H\big(R_{g^{\top}}\big)\big) f, \quad \text{by commutation}   \no \\
&= f +R_f (I - C_w)^{-1} \big(i\pi H\big(f g^{\top}\big)\big)    \no \\
&= f + R_f I - C_w)^{-1} (C_w I),    \quad \text{by \eqref{3.45}}    \no \\
&= f + R_f (-I) + R_f \big(\big(I - C_w)^{-1} I\big)   \no \\ 
&= R_f \mu,  \quad \text{by \eqref{eq17}}   \no \\
&= m_+ \big(I - i \pi f g^{\top}\big)^{-1} f = m_- \big(I + i \pi f g^{\top}\big)^{-1} f      \no \\
&= (I \mp i \pi \langle g, f \rangle)^{-1} m_{\pm} f,     \lb{3.46} 
\end{align}
with a similar formula for $G=\lf(G_1, \dots, G_N\rt)^{\top}$,
\begin{equation}
G = (I \pm i \pi \langle g, f \rangle)^{-1} \big(m^{\top}\big)_{\pm}^{-1} g     \lb{3.47} 
\end{equation}
(with $\langle \dott, \dott \rangle$ as in \eqref{3.44}).

For the integrable operator 
\begin{equation} 
K_s(z, z') = \sin \, s(z-z')/[\pi (z-z')] = 
e^{isz} \, e^{-i\,s\,z'} - e^{-s\,z} \,e^{i\,s\,z'}/[2\pi\, i(z-z')],
\end{equation} 
arising in the random matrix problem above, the RHP on the $\Sg=\RR$ with $v$ as in \eqref{eq22},
is $2\times 2$ with 
\begin{equation}\label{eq24}
v=\begin{pmatrix}
0 & e^{2i\,z\,s}\\
-e^{-2\,i\,z\,s} &2
\end{pmatrix}. 
\end{equation}
Suppose $P_s$ denotes the probability that there are no eigenvalues in the interval 
$\lf(a,\; a+ s/[N \, \psi(a)]\rt)$ with $N\to \infty$ as above, then
\begin{equation} 
\frac{d}{ds} \log(P_s) = \frac{d}{ds} \tr (\log \lf(I- K_s\rt)) = - \tr 
\lf((I-K_s)^{-1} \frac{d}{ds} K_s\rt),
\end{equation} 
and substituting for $F$, $G$ in $\lf(I-K_s\rt)^{-1}$ using the solution $m_\pm$ of the RHP
as in \eqref{3.46}, \eqref{3.47}, and utilizing the steepest-descent method for RHP to evaluate $m_\pm$ as 
$s\to\infty$, one obtains the celebrated result  
\begin{equation} 
\frac{d}{ds} \log (P_s) = \frac{-s^2}{2} + \frac{1}{4} \; \log(s) + \text{ const }
+ o(1) \text{ as $s\to \infty$}.
\end{equation}

\subsection{Poncelet's Porism}\label{sec8}
A critical element that makes commutation so useful is that the mapping $AB \to BA$ is 
isospectral (apart from $\la =0$).  As we will see, commutation can also be useful for 
similarity transforms, $A\to U\, AU^{\top}$.

An elegant example concerns a generalization of \tit{Poncelet's Porism} (see Gibson, Saldanha, and Tomei~\cite{GST24}, Hunziker, Martinez-Finkelshtein, Poe, and Simanek \cite{HM-FS21}, \cite{HM-FS22}, and  Martinez-Finkelshtein, Simanek, and Simon \cite{M-FSS19}). Poncelet's porism states that whenever a polygon is inscribed in one conic section and circumscribes another one, the polygon must be part of an infinite family of polygons with the same property (see Figure~2).  The notion of a porism goes back at least to Euclid.

Here are some examples:
\begin{figure}[ht]   \lb{Fig.2}
\begin{tikzpicture}[scale=0.7, line width=0.7pt]

\draw[thick] (0,0) ellipse (4 and 3);
\node at (2.7,2.8) {$E_{\rm out}$};

\draw[thick, rotate around={-11.3758:(0.4,0.45)}]
  (0.4,0.45) ellipse (2.27263 and 1.20058);
\node at (1.7,.85) {$E_{\rm in}$};

\draw (-3.35856,-1.62943) -- (3.97565,-0.330502) -- (-0.655172,2.95948) -- cycle;

\draw (3.48958,-1.4664) -- (1.43473,2.80038) -- (-3.99158,0.194584) -- cycle;

\draw(1.92627,-2.62923) -- (3.12834,1.86951) -- (-3.41653,1.56016) -- cycle;

\draw(-0.914699,-2.92051) -- (3.87796,0.735383) -- (-2.296,2.45656) -- cycle;

\node at (-3.6236,-1.8580) {$P_1$};
\node at (4.3219,-0.3817) {$P_2$};
\node at (-0.6984,3.3068) {$P_0=P_3$};

\end{tikzpicture}
\qquad
\begin{tikzpicture}[scale=0.7, line width=0.7pt]

\draw[thick] (0,0) circle (3);
\node at (2.1,2.8) {$E_{\rm out}$};

\draw[thick, rotate around={-20.5561:(0.3,0.45)}]
  (0.3,0.45) ellipse (2.23049 and 1.82554);
\node at (1.3442,1.55) {$E_{\rm in}$};

\draw  (-0.180031,-2.99459) -- (2.99971,0.0415644) -- (1.01626,2.82262) -- (-2.55649,1.56984) -- cycle;

\draw (-1.87002,-2.34585) -- (2.8897,-0.805984) -- (1.88657,2.33256) -- (-1.89248,2.32777) -- cycle;

\draw (-2.84821,-0.942185) -- (2.4451,-1.73824) -- (2.5144,1.6364) -- (-1.02845,2.81821) -- cycle;

\draw  (1.44115,-2.63117) -- (2.87726,0.849352) -- (-0.00966471,2.99998) -- (-2.96022,0.486952) -- cycle;

\node at (-3.1805,-1.0521) {$P_1$};
\node at (2.7304,-1.941) {$P_2$};
\node at (2.9,1.8273) {$P_3$};
\node at (-1.7,3.2) {$P_0=P_4$};

\end{tikzpicture}

\caption{Poncelet's porism, $k=3$ and $k=4$.}
\end{figure}

Poncelet's porism in full generality, which was preceded by the results of Euler and Fuss 
for 3 and 4 vertices, is 
a result which continues to be studied today because of the connections to many other branches
of mathematics.  For example, Hitchin \cite{Hi92} has shown how
to use the porism to construct new and explicit solutions of the Painlev\'e VI equation.

Gibson, Saldanha, and Tomei~\cite{GST24} have generalized Poncelet's porism in the following way.  Consider 2 distinct 
ellipsoids $E_{\tbf{out}},\: E_{\tbf{in}}\subset \RR^n$ centered at the origin with $E_{\tbf{in}}
\subset \text{conv} (E_{\tbf{out}})$, the convex hull of $E_{\tbf{out}}$. 
By applying a linear transformation one can assume that
$E_{\tbf{out}} =\BSS^{n-1}$, the unit sphere.  Clearly there exists a unique positive symmetric matrix
$A$ with $A E_{\tbf{out}}=E_{\tbf{in}}$.

A closed, convex polytope $P\subset \RR^n$ \tit{fits} between $E_{\tbf{out}}$ and $E_{\tbf{in} }$ if 
$E_{\tbf{in}} \subset P \subset \text{conv} \lf(E_{\tbf{out}}\rt)$.  The polytope $P$ is 
\tit{inscribed} in $E_{\tbf{out}}$ if all its vertices belong to $E_{\tbf{out}}$ and $P$ is 
\tit{circumscribed} to $E_{\tbf{in}}$ if all its hyperfaces are tangent to $E_{\tbf{in}}$.  It fits 
\tit{tightly} between $E_{\tbf{out}}$ and $E_{\tbf{in}}$ if it is inscribed in $E_{\tbf{out}}$ and
circumscribed to $E_{\tbf{in}}$.   
We will now describe one of the results of Gibson, Saldanha, and Tomei ~\cite{GST24}.

A centrally symmetric parallelotope in $\RR^n$ is a convex polytope with $2^n$ vertices of the form
$\pm v_1\, \pm v_2\, \pm\, \dots\, \pm v_n$ where the vectors $v_1,\dots, v_n$ form a basis.  Thus
for $n=2$, the polytope is a parallelogram and for $n=3$, a parallelepiped.  A \tit{label}
for a parallelotope is a family $(v_k)_{i\le k\le n}$ of vectors as above so that each vertex
in turn is labeled by a sequence of signs.

\tit{ 
Let $P\lf( E_{\tbf{in}}, \, E_{\tbf{out}}\rt)$ be the set of all labeled parallelotopes 
fitting tightly between $E_{\tbf{out}}$ and $E_{\tbf{in}}$.  Define the map
\begin{equation} 
\phi: P\lf(E_{\tbf{in}}, E_{\tbf{out}}\rt) \to O(n) \; \text{$($the real orthogonal group\,$)$}
\end{equation} 
taking a parallelotope with label $(v_k)$ to the matrix $Q \in O(n)$ whose
columns are obtained from $(v_k)$ by Gram--Schmidt orthonormalization.
}
\begin{theorem}
Let $E_{\tbf{out}}= \BSS^{n-1}$ and  $E_{\tbf{in}} = A E_{\tbf{out}}$, as above.  The set
$P\lf(E_{\tbf{in}}, E_{\tbf{out}}\rt)$ is nonempty if and only if $\tr\big(A^2\big)=1$.
In this case, the map $\phi : P\lf(E_{\tbf{in}}, E_{\tbf{out}}\rt) \to O(n)$ 
is a diffeomorphism.
\end{theorem}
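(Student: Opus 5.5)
The plan is to turn both conditions defining ``tightly fitting'' into explicit algebraic conditions on the label matrix $V=(v_1,\dots,v_n)$, and then let commutation of the trace, $\tr(Q^\top A^2 Q)=\tr(A^2QQ^\top)=\tr(A^2)$, produce the condition $\tr(A^2)=1$.

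\emph{Step 1 (inscribed condition).} The vertices are $Vs$ with $s\in\{\pm1\}^n$. The condition $|Vs|^2=1$ for all $s$ reads
\begin{equation*}
\sum_k |v_k|^2+\sum_{j\neq k}s_js_k\,(v_j\cdot v_k)=1 \quad \text{for all } s .
\end{equation*}
Flip one sign $s_j$ and subtract the two resulting identities. Averaging over the remaining signs then shows $v_j\cdot v_k=0$ for $j\neq k$, and hence $\sum_k|v_k|^2=1$. Since the $v_k$ form a basis, we can write $V=QD$ with $Q\in O(n)$ and $D=\mathrm{diag}(d_1,\dots,d_n)$, $d_k=|v_k|>0$. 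Because the $v_k$ are already orthogonal, Gram--Schmidt simply normalizes them. Thus $\phi$ sends the label $(v_k)$ to $Q$ with columns $Qe_k=v_k/d_k$.

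\emph{Step 2 (circumscribed condition).} For an orthogonal frame, the parallelotope is the box $\{x : |(Qe_k)\cdot x|\le d_k,\ 1\le k\le n\}$. Its hyperfaces lie in the hyperplanes $(Qe_k)\cdot x=\pm d_k$. The support function of $E_{\rm in}=A\,\BSS^{n-1}$ in a unit direction $u$ is $h(u)=\sup_{|y|=1}u\cdot Ay=|Au|$, using that $A$ is symmetric. A hyperface with outward normal $u$ at distance $d$ is tangent to the convex body $\mathrm{conv}(E_{\rm in})$, with that body on the inner side, exactly when $d=h(u)$. By symmetry of $E_{\rm in}$ under $x\mapsto -x$, the opposite face then behaves the same way. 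Having $d_k=h(\pm Qe_k)$ for all $k$ also gives $E_{\rm in}\subset P$. Together with Step 1, which gives $P\subset \mathrm{conv}(\BSS^{n-1})$ since the vertices lie on the sphere, we conclude: $P$ fits tightly if and only if
\begin{equation*}
d_k=|AQe_k| \ \ (1\le k\le n) \quad\text{and}\quad \sum_k d_k^2=1 .
\end{equation*}

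\emph{Step 3 (the trace condition and the diffeomorphism).} Substituting the first condition into the second gives
\begin{equation*}
1=\sum_k|AQe_k|^2=\tr(Q^\top A^2Q)=\tr(A^2),
\end{equation*}
by the commutation identity for traces, which is the finite-dimensional shadow of \eqref{1.4}. Hence nonemptiness forces $\tr(A^2)=1$. Conversely, if $\tr(A^2)=1$, then for every $Q\in O(n)$ the label $v_k=|AQe_k|\,Qe_k$ satisfies both conditions, and $|AQe_k|>0$ because $A$ is positive definite. So the map
\begin{equation*}
\psi\colon Q\mapsto \big(|AQe_k|\,Qe_k\big)_{k=1}^n
\end{equation*}
is a smooth map $O(n)\to P(E_{\rm in},E_{\rm out})$. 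By Steps 1 and 2 it is a two-sided inverse of $\phi$, and $\phi$ itself is smooth as Gram--Schmidt restricted to bases. Therefore $\phi$ is a diffeomorphism, where $P(E_{\rm in},E_{\rm out})$ carries the smooth structure inherited as a subset of $(\bbR^n)^n$.

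The main obstacle is the rigorous tangency criterion in Step 2. One must check that ``every hyperface tangent to $E_{\rm in}$'' (together with $E_{\rm in}\subset P$) is equivalent to $d_k=|AQe_k|$, which needs the support function computation and the symmetry of $E_{\rm in}$. Step 1 and the trace commutation are routine by comparison.
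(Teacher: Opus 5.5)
Your proof is correct, but it follows a different route from the one the paper sketches. The paper gives no self-contained proof. It defers to \cite{GST24} and only isolates where commutation enters: the Lemma's inequality $\mathrm{tr}\big(M(M^{\top}B^{-1}M+\lambda I)^{-1}M^{\top}\big)\le \mathrm{tr}(B)$, which is obtained from \eqref{1.2} by writing the left-hand side as $\mathrm{tr}(B)$ minus the trace of a positive matrix, and is then applied with $B=A^2$.

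You instead let the geometry do the work. The inscribed condition forces the label to be an orthogonal frame $V=QD$. The support function $h(u)=|Au|$ of $E_{\mathrm{in}}$ then pins down $d_k=|AQe_k|$. The only commutation you need is cyclicity of the trace, $\sum_k |AQe_k|^2=\mathrm{tr}(Q^{\top}A^2Q)=\mathrm{tr}(A^2)$, which is an identity rather than an estimate.

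Your route gives a short, self-contained proof with the explicit inverse $\psi\colon Q\mapsto (|AQe_k|\,Qe_k)_{k=1}^n$. This makes both the criterion $\mathrm{tr}(A^2)=1$ and the diffeomorphism property transparent. It also justifies your smooth structure: $\psi$ is a smooth injective immersion of the compact manifold $O(n)$ with smooth left inverse $\phi$, so $P(E_{\mathrm{in}},E_{\mathrm{out}})$ is an embedded submanifold of $(\bbR^n)^n$.

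The Lemma, by contrast, is a one-sided bound valid for arbitrary $M$ and every $\lambda>0$. For invertible $M$ it degenerates to the equality $\mathrm{tr}(A^2)=\mathrm{tr}(A^2)$ as $\lambda\downarrow 0$. Its strength therefore lies in configurations that cannot be reduced to an orthogonal box, such as non-tight fits or degenerate frames. Tightness lets you avoid these from the outset.
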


\tit{Commutation} enters the problem in the following way:  It provides a key estimate in 
the proof of the theorem that is a consequence of the following lemma.  The proof of the
lemma in turn is a simple consequence of the commutation formula.

\begin{lemma}
Let $\la >0$ and consider a positive definite matrix $B>0$ with $\tr (B)=1$
acting in $\RR^n$.  Let $M$ be a map from $\RR^n \to \RR^n$.  Then 
\begin{equation} 
\tr \lf(M \lf(M^{\top} B^{-1}\, M+\lambda I\rt)^{-1}\, M^{\top}\rt) \le \tr(B) = 1. 
\end{equation} 
\end{lemma}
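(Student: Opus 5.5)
The plan is to rewrite the trace so that the commutation formula \eqref{1.2} converts the resolvent of $M^{\top}B^{-1}M$ into a resolvent of an operator acting on the other side. Set $C = B^{-1/2}M$, so that $M^{\top}B^{-1}M = C^{\top}C$ and $M = B^{1/2}C$. Then
\begin{equation*}
\tr\big(M(M^{\top}B^{-1}M+\lambda I)^{-1}M^{\top}\big) = \tr\big(B^{1/2}\,C(C^{\top}C+\lambda I)^{-1}C^{\top}\,B^{1/2}\big).
\end{equation*}

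Next apply \eqref{1.2} with $A\to C$, $B\to C^{\top}$ and $z=-\lambda$. Since $\lambda>0$ and $C^{\top}C, CC^{\top}\ge 0$, the point $-\lambda$ lies in both resolvent sets, and we obtain
\begin{equation*}
C(C^{\top}C+\lambda I)^{-1}C^{\top} = I - \lambda (CC^{\top}+\lambda I)^{-1}.
\end{equation*}
The operator $T := I-\lambda(CC^{\top}+\lambda I)^{-1}$ is symmetric. By the spectral theorem its eigenvalues are $\mu/(\mu+\lambda)\in[0,1)$, where $\mu\ge 0$ runs over $\sigma(CC^{\top})$, so $0\le T\le I$.

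Finally, the quantity to bound is $\tr(B^{1/2}TB^{1/2})$. From $T\le I$ we get $B^{1/2}TB^{1/2}\le B^{1/2}IB^{1/2}=B$ in the order of symmetric matrices. Taking traces preserves this inequality, so the trace is at most $\tr(B)=1$. (Equivalently, by cyclicity, $\tr(TB)\le \tr(B)$ because $\tr((I-T)B) = \tr(B^{1/2}(I-T)B^{1/2})\ge 0$.)

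There is no serious obstacle. The only points needing care are justifying that $-\lambda\in\rho(CC^{\top})\cap\rho(C^{\top}C)$, which follows from positivity and $\lambda>0$, and the monotonicity of the trace under conjugation by $B^{1/2}$. The hypothesis $\tr(B)=1$ is used only at the last step, and positivity of $B$ only to define $B^{\pm 1/2}$.
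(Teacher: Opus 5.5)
Your proof is correct and follows essentially the same route as the paper. The paper sets $K=B^{-1/2}M$, $L=K^{\top}$ (your $C$ and $C^{\top}$), applies \eqref{1.2} at $z=-\lambda$ to write the trace as $\tr(B)-\tr\big(\lambda B^{1/2}(KL+\lambda I)^{-1}B^{1/2}\big)$, and concludes from the positivity of the subtracted term; your symmetric form $B^{1/2}TB^{1/2}\le B$ is the same argument, written slightly more directly since it avoids the cyclicity step.
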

\begin{proof}
Write
\begin{equation} 
\begin{aligned}
&\tr \lf(M \lf(M^{\top} B^{-1} M + \lambda I\rt)^{-1} M^{\top}\rt)    \\
& \quad = \tr \big(B K (LK + \lambda I)^{-1} L\big), \, 
\text{ where } \, K=B^{-\frac{1}{2}} M \, \text{ and } \, L = \big(B^{-\frac{1}{2}} M\big)^{\top}.
\end{aligned}
\end{equation} 
But by \eqref{1.2}, $\lambda (KL + \lambda I)^{-1} + K (LK+\lambda I)^{-1} L = I$, and so 
\begin{align} 
\begin{split}
& \tr \Big( M \lf( M^{\top} B^{-1} \, M+\lambda I\rt)^{-1} M^{\top} \Big)   \\
& \quad = \tr (B) - \tr \big(B \lambda (KL + \lambda I)^{-1}\big)    \\
& \quad = \tr (B) - \tr\Big(\lambda B^{1/2} \big(B^{1/2} M M^{\top}\ B^{1/2} + \lambda I\big)^{-1} B^{1/2}\Big). 
\end{split}
\end{align} 
But the second matrix is positive definite and so we obtain
\begin{equation} 
\tr \Big(M\lf( M^{\top}\, B^{-1} M + \lambda I \rt)^{-1} M^{\top}\Big) \le \tr(B) = 1,
\end{equation} 
which proves the Lemma.
\end{proof}

\noindent
Now apply the Lemma to $B=A^2$.

Thus, one concludes that commutation is useful not only for spectral problems, but also for
similarity transforms, $A\to U\, A\, U^{\top}$.

\subsection{Interpolation}\label{sec9}
One can use commutation to prove the following result, without using complex interpolation.
\begin{theorem}
Let $A, B \in \cB(\CH)$ with $A\ge 0$.  Suppose that
$\|AB\| \le 1$ and $\|B A\| \le 1$. Then
\begin{equation}\label{eq25}
\|A^\alpha \,B\, A^{1-\alpha}\| \le 1 \ , \quad 0 \le \alpha \le 1. 
\end{equation}
\end{theorem}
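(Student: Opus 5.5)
The plan is to prove \eqref{eq25} first for dyadic exponents $\alpha = k/2^n$ by an induction that uses commutation, and then to pass to all $\alpha \in [0,1]$ by continuity. Two tools drive the argument. The first is the $C^*$-identity $\|T\|^2 = \|T^*T\| = \|TT^*\|$. The second is the fact, from Subsection~\ref{sec1} and \eqref{eq3}, that $\sigma(XY)\backslash\{0\} = \sigma(YX)\backslash\{0\}$, so $XY$ and $YX$ have the same spectral radius $r(\cdot)$. For self-adjoint operators the norm equals the spectral radius.

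It will be convenient to reduce to the case of self-adjoint $B$. This can be done by doubling. Put $\widetilde A = \left(\begin{smallmatrix} A & 0 \\ 0 & A\end{smallmatrix}\right) \ge 0$ and $\widetilde B = \left(\begin{smallmatrix} 0 & B \\ B^* & 0\end{smallmatrix}\right)$ on $\CH \oplus \CH$. Then $\|\widetilde A \widetilde B\| = \max(\|AB\|, \|AB^*\|) = \max(\|AB\|, \|BA\|) \le 1$, and likewise $\|\widetilde B \widetilde A\| \le 1$. Moreover $\widetilde A^{\alpha}\widetilde B \widetilde A^{1-\alpha}$ contains $A^\alpha B A^{1-\alpha}$ as an off-diagonal block. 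So we may assume $B = B^*$. Set $f(\alpha) = \|A^\alpha B A^{1-\alpha}\|$. For $\alpha \in \{0,1\}$ the bound $f(\alpha)\le 1$ is the hypothesis.

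The key inequality is midpoint convexity in the form $f\big((\alpha+\beta)/2\big)^2 \le f(\alpha) f(\beta)$. Write $\gamma = (\alpha+\beta)/2$ and $T = A^{\gamma} B A^{1-\gamma}$. Then $T^*T = A^{1-\gamma} B A^{2\gamma} B A^{1-\gamma}$ is self-adjoint, so $\|T\|^2 = r(T^*T)$. Moving the leftmost factor $A^{1-\gamma}$ to the right end does not change the spectral radius, by commutation. This gives
\[
r(T^*T) = r\big(B A^{2\gamma} B A^{2-2\gamma}\big) = r\big((A^{\beta} B A^{1-\alpha})\, (A^{\alpha} B A^{1-\beta})\big),
\]
after a second commutation that cyclically regroups the powers of $A$. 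Here one uses $2\gamma = \alpha + \beta$ and $2 - 2\gamma = (1-\alpha) + (1-\beta)$. Since $r(XY) \le \|X\|\,\|Y\|$, the right side is at most $\|A^\beta B A^{1-\alpha}\|\,\|A^\alpha B A^{1-\beta}\|$.

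These are mixed terms, not $f(\alpha)$ and $f(\beta)$ directly. I would handle this by running the induction on the two-parameter quantity $g(\alpha,\beta) = \|A^\alpha B A^{1-\beta}\|$, for $\alpha, \beta$ dyadic in $[0,1]$, and proving $g \le 1$. The base cases $\alpha, \beta \in \{0,1\}$ are $\|BA\|$, $\|AB\|$, $\|B\|$ and $\|ABA\|$. The last two need care. The bound $\|ABA\| \le \|AB\|\,\|A\|$ need not be at most $1$, so the parameters must be restricted to pairs with $\alpha + (1-\beta) = 1$ or with exponents whose sum is controlled. I expect this bookkeeping, namely choosing the right family of exponent pairs so that the commutation step closes the induction, to be the main obstacle. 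My first attempt would be to keep only the pairs with $\alpha = \beta$, and to bound the mixed product through the self-adjoint square. If $B = B^*$, then $(A^\alpha B A^{1-\beta})^* = A^{1-\beta} B A^{\alpha}$, and by commutation its spectral data pair up with those of $f(\alpha)$ and $f(\beta)$.

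Once $f \le 1$ holds on the dyadics in $[0,1]$, continuity of $\alpha \mapsto A^\alpha$ in norm on $(0,1)$, which follows from the functional calculus for $A \ge 0$, extends \eqref{eq25} to all $\alpha \in [0,1]$.
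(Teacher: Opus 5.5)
You have a genuine gap at the central step. Your doubling reduction to $B=B^*$ is valid, and so is the continuity step at the end. The regrouping identity, however, is false as written:
\[
(A^{\beta}BA^{1-\alpha})(A^{\alpha}BA^{1-\beta})=A^{\beta}BABA^{1-\beta}.
\]
This product has spectral radius $r((BA)^2)$, which in general differs from $r(BA^{\alpha+\beta}BA^{2-\alpha-\beta})$. The two agree only when $\alpha+\beta=1$.

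A correct split that still produces mixed terms (factors $A^{a}BA^{b}$ with $a+b\neq 1$) leaves you with quantities the hypotheses do not control. Your fallback, an induction proving $g(\alpha,\beta)=\|A^{\alpha}BA^{1-\beta}\|\le 1$, cannot be rescued by bookkeeping, because that bound is false. For $A=0$ the hypotheses hold, while $g(0,1)=\|B\|$ is arbitrary. As it stands, your induction never closes.

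The missing idea is to split the cyclic word so that each factor again has total exponent one. Take $B=B^*$, $\gamma=(\alpha+\beta)/2$ and $T=A^{\gamma}BA^{1-\gamma}$. Two applications of commutation give
\[
\|T\|^2=r\big(BA^{\alpha}\,A^{\beta}BA^{1-\beta}\,A^{1-\alpha}\big)=r\big((A^{1-\alpha}BA^{\alpha})(A^{\beta}BA^{1-\beta})\big)\le f(\alpha)f(\beta),
\]
using $\|A^{1-\alpha}BA^{\alpha}\|=\|(A^{\alpha}BA^{1-\alpha})^*\|=f(\alpha)$. So the midpoint log-convexity $f(\gamma)^2\le f(\alpha)f(\beta)$ holds with no mixed terms. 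Every $k/2^{n+1}$ with $k$ odd is the midpoint of two dyadics of level $n$, so the induction starting from $f(0)=\|BA\|\le 1$ and $f(1)=\|AB\|\le 1$ closes immediately.

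The paper uses only the special case $\beta=0$. For $j\le 2^n$ it bounds $\|A^{j/2^{n+1}}BA^{1-j/2^{n+1}}\|^2\le\|A^{1-j/2^n}B^*A^{j/2^n}\|\,\|BA\|$. It handles $j>2^n$ by the reflection $\alpha\mapsto 1-\alpha$, $B\mapsto B^*$, which works because the hypotheses are symmetric under $B\leftrightarrow B^*$; this plays the role of your doubling trick.
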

\begin{proof}
We recall that since $\|T \| = \|T^*\|$ for any operator $T$, $\|A^\alpha B\,A^{1-\alpha}\| \le 1$ 
for $0 \le \alpha \le 1$ implies 
\begin{equation}\label{eq26}
\|A^\alpha B^*\ A^{1-\alpha}\| \le 1, \quad 0\le \alpha \le 1.
\end{equation}

By taking limits, it is enough to prove \eqref{eq25} for $\alpha =j/2^n$, $0 \le j
\le 2^n$.  This is clearly true for $n=0$, that is, $\alpha=0$ or $1$.  Assuming
\eqref{eq25} is true for $n$, we show that it is true for $n+1$, that is, for  
$\alpha = j/2^{n+1}$, $j=0, 1, \dots, 2^{n+1}$. 
For $j\in \{0, 1, \dots, 2^n\} \subset \{ 0, 1, \dots, 2^{n+1}\}$, one infers, using
$\|T\|^2= \|T^*\, T\|$ and the fact that $\|T\| =\sup \{ |\lambda| = \lambda \in \sigma (T)\}$ if $T=T^*$,
\begin{equation} 
\big\|A^{j/2^{n+1}} B A^{1-j/2^{n+1}} \big\|^2 = \sup \Big\{|\lambda| \, \Big| \,  \lambda \in \sigma
\big(A^{1-j/2^n} B^* A^{j/2^n} BA\big) \Big\} .
\end{equation} 
But for \tit{any} $T$, $\:\sup \{ |\lambda| \le \sg(T)\} \le \|T\|$ and hence
\begin{equation} 
\big\|A^{j/2^{n+1}} B\;A^{1-j/2^{n+1}}\big\|^2 \le \big\|A^{1-j/2^n} B^*\;A^{j/2^n}\big\|
\;\|BA\| \le 1,
\end{equation} 
by the induction hypothesis as $0\le j \le 2^n$.

If $2^n <j \le 2^{n+1}$, 
\begin{equation} 
\big\|A^{j/2^{n+1}} B\:A^{1-(j/2^{n+1})}\big\| = \big\|A^{1-j/2^{n+1}} B^*\:A^{j/2^{n+1}}\big\|
= \big\|A^{j'/2^{n+1}} B^*\:A^{1-(j'/2^{n+1})}\big\|, 
\end{equation} 
where $\jmath'=2^{n+1}-j$.  As $2^n<\jmath\le 2^{n+1}$, $0\le \jmath'\le 2^{n+1}-2^n = 2^n$, 
the induction is true for $n+1$.  This proves \eqref{eq25}.
\end{proof}

\begin{remark}
One notes the similarity to the arguments used to prove Segal's Lemma in Subsection~\ref{sec4}. 
\hfill $\diamond$
\end{remark}

\subsection{Eigenvalue Computation}\label{sec10}
Suppose we want to compute the eigenvalues of an invertible symmetric matrix $A_0$.

\noindent
\tbf{Step 1.} Factor $A_0=Q_0 R_0$, where $Q_0$ is orthogonal and $R_0$ is upper triangular with positive diagonal entries.
Such a ``$QR$'' factorization is just Gram--Schmidt orthogonalization applied to the 
columns of $A_0$, starting from the left.  It is easy to see that such a factorization
is unique.

\noindent
\tbf{Step 2.} Set $A_1=R_0 Q_0$.  This is the ``$QR$'' step. As $0\notin \sg (A_0)$, $\sg(A_1)= \sg (A_0)$.  Moreover, $A_1=\lf(
Q_0^{\top} A_0\rt)Q_0 = Q_0^{\top} A_0 Q_0$, so $A_1$ is also symmetric.

\noindent
\tbf{Step 3.} Factor $A_1=Q_1 R_1$ and set $A_2=R_1 Q_1$.  Again by commutation,
$\sg(A_2) = \sg(A_1) = \sg(A_0)$ and $A_2=A^{\top}_2$.

\noindent
\tbf{Step 4.}  Factor $A_2=Q_2 R_2$ and set $A_3 = R_2 Q_2$, etc.
Continuing, we obtain a sequence $A_0, A_1,\ A_2, \dots, A_n, \,\dots$
of symmetric matrices, all with $\sg(A_n)=\sg(A_0)$.  Now generically,
$A_n \to A_\infty$ as $n\to\infty$, where $A_\infty$ is a diagonal matrix. 
Necessarily, the diagonal entries of $A_\infty$ are the eigenvalues of $A_0$.

This elegant scheme is the famous so-called $QR$ \tit{algorithm} introduced by
Francis and (independently) by  Kublanovska in the late 1950's.  With suitable
modifications (use ``shifts'') the $QR$ algorithm is the go-to algorithm for the 
commutation of the eigenvalues of mid-size matrices of dimension $< 500$.  The algorithm
holds center-stage in LINPACK, the go-to collection of methods in numerical linear
algebra.

The algorithm has the structure of an integrable Hamiltonian system, and has many other
remarkable properties, in particular it is intimately connected to the Toda flow, see Deift, Dubach, Tomei, and Trogdon \cite{DDTT25}.

\subsection{The Kardar--Parisi--Zhang (KPZ) Equation}\label{sec11}
In 2011, Amir, Corwin and Quastel \cite{ACQ11} considered the free energy $\msF(T,X)$ of the
continuum directed random polymer in $1+1$ dimensions.  Associated with $\msF,\,
\CL(T,X) \equiv P(T,X) \exp \lf(\msF(T,X)\rt)$ with $P(T,X) = \frac{1}{\sqrt{2\pi\,T}} e^{-\chi^2/2T}\;$, solves the stochastic heat equation
\begin{align} \label{eq27}
\p_T\, \CL = \frac{1}{2}\;\p^2_x \, \CL - \CL\, \dW, \, \text{ with initial condition } \CL(T=0,X)= \de_{x=0}. 
\end{align}
Here $\dW (T,X)$ is Gaussian space-time white noise such that
\begin{equation} 
E\big( \dW (T, X), \;\dW (S,Y)\big) = \de(T-S) \; \de(Y-X). 
\end{equation} 
Corresponding to $\CL$, $h=-\log(\CL(T,X))$ is the Cole--Hopf solution of the KPZ equation, 
\begin{equation} 
\p_T\,h = -\frac{1}{2} \lf(\p_x\, h\rt)^2 + \frac{1}{2} \p^2_x h + \dW,
\end{equation} 
with so-called narrow wedge initial conditions.

In their paper the authors derive a remarkable formula for the distribution function
$F_T(s)$ for $\msF(T,X)$,  
\begin{align}\label{eq28}
\begin{split} 
F_T(s) &= P\lf(\msF(T,X) + [T/4!] \le s\rt)    \\
&= \int_{\tCC}
\, \frac{d  \tmu}{\tmu} \, e^{-\tmu} 
{\det}_{L^2((K^{-1}_T\, a, \infty))} \lf(I-K_{\sg_{(T,\tmu)}} \rt). 
\end{split} 
\end{align}
Here
\begin{align}
&\sg_{\lf(T_1,\,\tilde{\mu}\rt)}(t) = \frac{\tilde{\mu}}{\tilde{\mu}-e^{-K_T\,t}},    \\
&K_T=2^{-1/3}\, T^{1/3}, 
\end{align} 
and for any function $\sg(\dott)$ which is smooth except for a finite number of bounded
jumps, and $\sg(t)\to 0$ as $t\to -\infty$ and $\sg(t) \to 1$ as $t \to +\infty$.
\begin{equation} 
K_\sg (x,y) = \int^\infty_{-\infty} \sg (t) \, A_i (x+t) \, A_i(y+t) \quad \text{($A_i(\dott)$ is the Airy function)}. 
\end{equation} 
One notes that for $\sg(t) = \chi_{t\ge 0}$, $K_{\sg(t)} = K_{A_i} \text{ equals the Airy kernel}$.

Commutation enters the analysis through the following formula for $K_\sg$.
\begin{lemma}
For any function $\sg(\dott)$ as above, 
\begin{equation}\label{eq29}
{\det}_{L^2((s, \infty))} \lf(I-K_\sg\rt) = {\det}_{L^2((-\infty, \infty))} \lf(I-\widehat{K}_s\rt), 
\end{equation}
where $\widehat{K}_s (x,y) = \sqrt{\sg(x-s)}\; K_{A_i}(x,y) \; \sqrt{\sg(g-s)}$. \\
In particular, \eqref{eq29} holds for $K_{\sg_{(T,\tmu)}}$ in \eqref{eq28}.
\end{lemma}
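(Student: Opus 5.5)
The plan is to factor $K_\sigma$ restricted to $L^2((s,\infty))$ as a product $DE$ of two Hilbert--Schmidt operators passing through $L^2(\bbR)$. Then the determinant identity \eqref{1.4}, ${\det}(I-DE)={\det}(I-ED)$, turns it into a determinant on $L^2(\bbR)$. The natural factorization comes from the integral representation of $K_\sigma$. Write $\chi_s$ for the indicator of $(s,\infty)$ and define the operators
\begin{equation}
(E h)(t) = \sqrt{\sigma(t)} \int_s^\infty A_i(y+t) h(y)\, dy, \qquad (D u)(x) = \chi_s(x)\int_{\bbR} A_i(x+t)\sqrt{\sigma(t)}\, u(t)\, dt,
\end{equation}
so that $E: L^2((s,\infty)) \to L^2(\bbR)$ and $D: L^2(\bbR)\to L^2((s,\infty))$. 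Then $DE$ has kernel $\int \sigma(t) A_i(x+t)A_i(y+t)\,dt = K_\sigma(x,y)$ on $(s,\infty)^2$.

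The commuted operator $ED$ acts on $L^2(\bbR)$ with kernel
\begin{equation}
\sqrt{\sigma(t)}\left(\int_s^\infty A_i(y+t)A_i(y+t')\,dy\right)\sqrt{\sigma(t')}.
\end{equation}
Substituting $y = s + r$ with $r>0$, the inner integral becomes $\int_0^\infty A_i(t+s+r)A_i(t'+s+r)\,dr = K_{A_i}(t+s,t'+s)$, by the standard integral formula for the Airy kernel. Shifting variables $x=t+s$, $y = t'+s$ is a unitary translation on $L^2(\bbR)$, and it does not change the Fredholm determinant. This yields exactly the kernel $\sqrt{\sigma(x-s)}K_{A_i}(x,y)\sqrt{\sigma(y-s)}$ of $\widehat K_s$, and \eqref{eq29} follows from \eqref{1.4}. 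The final assertion then only requires checking that $\sigma_{(T,\tmu)}$ satisfies the hypotheses. For suitable $\tmu$ on the contour $\tCC$ it is smooth, tends to $0$ as $t\to-\infty$ and to $1$ as $t\to+\infty$, and it is bounded.

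The main obstacle is justifying that $D$ and $E$ are Hilbert--Schmidt, or at least that $DE$ and $ED$ are trace class, so that \eqref{1.4} applies. The Airy function decays superexponentially at $+\infty$ but only oscillates with $|x|^{-1/4}$ decay at $-\infty$. The Hilbert--Schmidt norm of $E$ is $\int_{\bbR}|\sigma(t)|\int_s^\infty |A_i(y+t)|^2\,dy\,dt$. For $t\to+\infty$ this is controlled by the decay of $A_i$. For $t\to-\infty$ one needs $\sigma(t)$ to decay fast enough to beat the roughly $|t|^{1/2}$ growth of $\int_{s+t}^\infty A_i^2$. I would therefore first strengthen the hypothesis "$\sigma(t)\to0$ as $t\to-\infty$" to a quantitative decay, e.g.\ $|\sigma(t)| \le C(1+|t|)^{-2}$. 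This holds for $\sigma_{(T,\tmu)}$, which decays like $e^{K_T t}$. The same estimate shows $D$ is Hilbert--Schmidt, since $D$ is essentially the transpose of $E$.

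For complex-valued $\sigma$, as for $\sigma_{(T,\tmu)}$, the square root should be taken with a fixed branch. Alternatively, one can avoid square roots by placing all of $\sigma$ in $E$ and none in $D$. The determinant of $ED$ is unchanged by the similarity $\sqrt{\sigma}\,\cdot\,\sqrt{\sigma}^{\,-1}$, or more simply by another application of \eqref{1.4}, so both normalizations give the same value.
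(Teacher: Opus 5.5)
Your proposal is correct and follows the paper's proof essentially step for step. The paper writes $K_\sigma=(\chi_s L_{-\infty}\sqrt{\sigma})(\sqrt{\sigma}L_s)$, which is exactly your $DE$; it commutes to the kernel $\sqrt{\sigma(x)}\int_s^\infty A_i(x+u)A_i(y+u)\,du\,\sqrt{\sigma(y)}$ on $L^2(\bbR)$ and identifies this with $\widehat K_s$ by a unitary translation by $s$. Your Hilbert--Schmidt estimates, with the quantitative decay of $\sigma$ at $-\infty$ that you correctly note is needed, supply the trace-class justification the paper leaves implicit. Your closing similarity by $\sqrt{\sigma}^{-1}$ is not legitimate, since that operator is unbounded, but it is also unnecessary: your main argument already works for any measurable choice of $\sqrt{\sigma}$.
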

\begin{proof}
Let  $\chi_s(x) = 1_{x \ge s}$ and let 
\begin{equation} 
L_s: \begin{cases} L^2 ((s,\infty)) \to L^2 ((-\infty, \infty))  \\
f \mapsto (L_s f)(x) = \int^\infty_s\, A_i (x+y) f(y) \, dy. 
\end{cases} 
\end{equation} 
Then,  
\begin{align}
\chi_s(x) \:L_{-\infty} \, \sg \;L_s\, f &= \chi_s(x) \int^\infty_{-\infty}\;
A_i(x+u)\, \sg(u) \, \lf(L_s\, f\rt)(u)\, du   \no \\
&= \chi_s(x) \int^\infty_{-\infty} A_i (x+u) \, \sg (u) \, \bigg(\int^\infty_s A_i
(u+y)\, f(y)\, dy \bigg) du    \no \\
&= \chi_s(x) \int^\infty_s \lf(\int^\infty_{-\infty} A_i (x+u) \,
\sg (u)\, A_i(y+u)\, du\rt) f(y)\, dy    \no \\
&= \chi_s(x) \int^\infty_s \, K_\sg (x,y)\, f(y)\, dy     \no \\
&= K_\sg \, f(x), 
\end{align}
with $K_\sg: L^2((s, \infty)) \to L^2((s, \infty))$, that is, $K_\sg= \chi_s L_{-\infty} \sg L_s = A B$, where 
$A=\chi_s\, L_{-\infty} \sqrt{\sg}$, $B=\sqrt{\sg} L_s$.
On the other hand, 
\begin{equation} 
\begin{aligned}
B\:A f &= \lf(\sqrt{\sg} L_s \chi_s L_{-\infty} \sqrt{\sg} f\rt)(x)   \\
&= \sqrt{\sg(x)}\int^\infty_s A_i (x+u) \chi_s(u)\lf(L_{-\infty} \sqrt{\sg} f \rt)(u)\,du    \\
&=\sqrt{\sg(x)} \int^\infty_s A_i (x+u) \lf(\int^\infty_{-\infty} A_i(u+y) \sqrt{\sg(y)} f(y)\,dy\rt) du \\
&= \sqrt{\sg(x)} \int^\infty_{-\infty} \lf(\int^\infty_s \:A_i (x+u) A_i(y+u) \,du \rt) \sqrt{\sg(y)} f(y)\, dy\\
&= \int^\infty_{-\infty} \lf[ \sqrt{\sg(x)} \lf( \int^\infty_0 A_i (x+u+s) A_i (y+u+s)\,du\rt) \sqrt{\sg(y)} \rt] f(y)\,dy. 
\end{aligned}
\end{equation} 
By commutation, the operator 
\begin{equation} 
D \equiv \sqrt{\sg(x)} \lf(\int^\infty_0
A_i (x+u+s) A_i(y+u+s)\,du\rt) \sqrt{\sg(y)} \, dy\, \text{ on $L^2((-\infty,\infty))$,} 
\end{equation} 
has the same spectrum as $K_\sg$ on $L^2((s, \infty))$.
But $D$ is unitarily equivalent to $\widehat{K}_s (x,y)
= \sqrt{\sg(x-s)} K_{A_i}(x,y) \sqrt{\sg (y-s)}$ on $L^2 ((-\infty, \infty))$ and 
the Lemma follows.
\end{proof}

\begin{remark}
Cafasso and Claeys \cite{CC22} used the RHP associated with the integrable 
operator $\widehat{K}_s (x,y) = \sqrt{\sg (x-s)} K_{A_i} \sqrt{\sg (y-s)}$ 
to evaluate precise lower tail asymptotics for 
the Cole--Hopf solution $h(T,X)$ of the KPZ equation with narrow wedge initial data.  More
precisely, they evaluated the behavior of $\log(P_{KPZ} \lf(Y_T < -s\rt))$ as 
$s\to +\infty$, where $Y_T= [h(2T, 0)+ T/12]/T^{1/3}$. 
\hfill $\diamond$
\end{remark}
 
A similar reduction of an operator-valued RHP arises in the work of Its, Bothner, Simon, 
and Kozlowsky \cite{IBSK26}, when they consider the Emptiness Formation Probability in the 1D impenetrable
Bose gas. Here they use $AB \to BA$ to reduce an operator-valued RHP to a standard $2\times 2$
RHP associated with the deformed sine-kernel integrable operator
\begin{equation} 
\frac{\gamma}{\pi} \sqrt{W_t(\lambda)} \frac{\sin (\lambda-\mu)}{\lambda-\mu}
\sqrt{W_t(\mu)}, \quad 0< \gamma <1, 
\end{equation} 
for some explicit function $W_t(\dott)$.

\subsection{Pfaffians}\label{sec3.8}
We recall the following result of Cayley (see, e.g., \cite{Ca47}): If $A \in \bbC^{n \times n}$ is a skew-symmetric matrix, then there exists a function, $\Pf(A)$, the {\it Pfaffian of $A$}, which is a polynomial in the entries $A_{j,k}$ of $A$, such that 
\begin{equation}
\Pf(A)^2 = \det(A). 
\end{equation}   
Going back to Pfaff's analysis of differential systems in 1815, Pfaffians have found applications in a great variety of mathematical and physical problems, including perfect matchings on planar graphs, tilings of the plane by dimers, the solution of the two-dimensional Ising model on planar graphs, and in differential geometry where the Pfaffian serves as a characteristic form related to the topology of oriented manifolds. In addition, Pfaffians play an important role in random matrix theory and related areas: this is the subject of this subsection.  

Pfaffians can be extended to infinite dimensions as {\it Fredholm Pfaffians} as follows (see, \cite{BJ25}, \cite{FTZ22}, \cite{Ra00}, and \cite{OQR17}). Let $(X;d\mu)$ be a measure space and let 
\begin{equation}
K(x,y) = \begin{pmatrix} K_{1,1}(x,y) & K_{1,2}(x,y) \\ K_{2,1}(x,y) & K_{2,2}(x,y) \end{pmatrix}, \quad x, y \in X,
\end{equation}
be a $2 \times 2$ matrix-valued skew-symmetric kernel that induces an integral operator on $L^2(X; d\mu) \times L^2(X; d\mu)$,
\begin{align}
\begin{split} 
& (K f)(x) = \int_{X} K(y,x) f(y) \, d\mu(y) \, \text{ for a.e.~$x \in X$,}    \\ 
& f(\dott) = (f_1(\dott), f_2(\dott))^{\top} \in L^2(X; d\mu) \times L^2(X; d\mu). 
\end{split} 
\end{align}
Skew-symmetry in this context means that 
\begin{align}
& K_{1,1}(x,y) = - K_{1,1}(y,x), \; K_{2,2}(x,y) = - K_{2,2}(y,x), \; K_{1,2}(x,y) = - K_{2,1}(y,x) \no \\
& \hspace*{8.5cm} \text{for a.e.~$x, y \in X$.}     \lb{3.74} 
\end{align}
Denoting by $J$ the operator 
\begin{equation}
(J f)(\dott) = (f_2(\dott), - f_1(\dott))^{\top}, \quad f = (f_1, f_2)^\top \in L^2(X; d\mu) \times L^2(X; d\mu),
\end{equation}
then, if $K$ is a trace class operator on $L^2(X; d\mu) \times L^2(X; d\mu)$,
\begin{equation}
\Pf(J-K) := I + \sum_{\ell=1}^{\infty} \f{(-1)^{\ell}}{\ell!} \int_{X^{\ell}} \Pf\big((K(x_j,x_k))_{j,k=1}^{\ell}\big) \, \prod_{j=1}^{\ell} d\mu(x_j)
\end{equation}
defines the {\it Fredholm Pfaffian of $K$}. Here $\Pf(\dott)$ under the integral denotes the ordinary Pfaffian of the $2 \ell \times 2 \ell$ skew-symmetric matrix $(K(x_j,x_k))_{j,k=1}^{\ell}$.  

Direct computations (see \cite{OQR17}) yield that 
\begin{equation}
(\Pf(J-K))^2 = \det(I - M),     \lb{3.77}
\end{equation}
where $\det(I - M)$ denotes the Fredholm determinant of the trace class operator $M$, 
\begin{equation}
M = \begin{pmatrix} - K_{2,1}(x,y) & -K_{2,2}(x,y) \\ K_{1,1}(x,y) & K_{1,2}(x,y) \end{pmatrix}.    \lb{3.78} 
\end{equation}
Formula \eqref{3.77} represents the analog of Cayley's formula $\Pf(A)^2 = \det(A)$. Most importantly, \eqref{3.77} turns asymptotic questions for the Fredholm Pfaffian $\Pf(J-K) $ into asymptotic questions for the Fredholm determinant $\det(I - M)$, $\Pf(J-K) = \det(I - M)^{1/2}$, a subject about which much is known. 

In \cite{BJ25} the authors consider operators $K$ on $L^2(\Delta)= L^2(\Delta;dx)$, where 
\begin{equation}
\Delta = \bigcup_{j=1}^m (a_{2j-1}, a_{2j}) \subset \bbR, \quad -\infty < a_1 < a_2 < \cdots < a_{2m-1} < a_{2m} < \infty, 
\end{equation}
with the property that $\Pf(J-K)$ or the associated Fredholm determinant $\det(I - M)$ is computable in terms of a canonical RHP. We will discuss one class of such operators -- the {\it symplectic derived class}; for other classes, see \cite{BJ25}. As we will see, commutation plays a key role in this context. 

We say that $K = (K_{j,k})$ is {\it of symplectic derived type} if $K_{j,k}$ in \eqref{3.74} are of the form 
\begin{align}
& K_{1,2}(y,x) = - K_{2,1}(x,y) = S(x,y),    \no \\
& K_{2,2}(x,y) = \partial_y S(x,y), \\ 
& \partial_x K_{1,1}(x,y) = S(x,y); \quad (x,y) \in \Delta \times \Delta.    \no 
\end{align}
Under mild conditions on $S(\dott,\dott)$, $K_{j,k}(\dott,\dott)$ in \eqref{3.78} induce trace class operators on $L^2(\Delta)$. Given these definitions, $M$ in \eqref{3.78} takes on the form
\begin{equation}
M = \begin{pmatrix} S & G \\ H & S^{\top} \end{pmatrix},    \lb{3.81}
\end{equation}
where 
\begin{equation}
G(x,y) = - \partial_y S(x,y), \quad \partial_x H(x,y) = S(x,y),
\end{equation}
and $S^{\top}$ denotes the real adjoint of $S$.

Commutation now implies the following result:

\begin{lemma} \lb{l3.7}
Denote by $D_M(\Delta)$ the Fredholm determinant on $L^2(\Delta) \times L^2(\Delta)$ of $M$, with $M$ as in \eqref{3.81},  given the aforementioned hypotheses placed on $M_{j,k}$. Then one has the identity,
\begin{equation}
D_M(\Delta) = D_{2S}(\Delta) \det\big((\delta_{j,k} - F_{j,k}(\Delta))_{j,k=1}^{2m}\big).     \lb{3.83} 
\end{equation}
Here
\begin{equation}
F_{j,k} = (-1)^k \Big(\big(I- 2 S^{\top}\big)^{-1} H\Big)(a_k,a_j),    \lb{3.84} 
\end{equation}
provided $I- 2 S$ is boundedly invertible in $L^2(\Delta)$ and $D_{2S}(\Delta)$ represents the Fredholm determinant of $2S$, with $S$ a trace class operator on $L^2(\Delta)$. 
\end{lemma}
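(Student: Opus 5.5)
The plan is to factor $M$ as a product plus a finite-rank boundary correction coming from integration by parts, and then apply the commutation determinant formula \eqref{1.4} twice. The first application produces $D_{2S}(\Delta)$. The second turns the finite-rank remainder into the $2m\times 2m$ determinant in \eqref{3.83}.

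\textbf{Step 1 (integration by parts).} Let $D=d/dx$ act on functions on $\Delta$, ignoring domain issues for now. Integrating by parts on each interval $(a_{2j-1},a_{2j})$ gives, for $f$ smooth on $\overline{\Delta}$,
\begin{equation*}
(Gf)(x)=-\int_\Delta \partial_y S(x,y)f(y)\,dy=(SDf)(x)-\sum_{k=1}^{2m}(-1)^k S(x,a_k)f(a_k).
\end{equation*}
Skew-symmetry of $K_{1,1}$ gives $H(x,y)=-H(y,x)$. Hence $-\partial_yH(x,y)=(\partial_1H)(y,x)=S(y,x)$, which is the kernel of $S^{\top}$. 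Therefore
\begin{equation*}
S^{\top}f=HDf-\sum_{k=1}^{2m}(-1)^kH(\cdot,a_k)f(a_k).
\end{equation*}
Put $U=\begin{pmatrix}I\\ H\end{pmatrix}$ in the first block column and $V=(S,\;SD)$, so that $UV=\begin{pmatrix}S&SD\\ HS&HSD\end{pmatrix}$. This is not quite $M$, so the factorization must be adjusted. The cleaner choice is
\begin{equation*}
M=\begin{pmatrix}S\\ H\end{pmatrix}\begin{pmatrix}I & D\end{pmatrix}-E,\qquad
E\begin{pmatrix}f_1\\ f_2\end{pmatrix}=\sum_{k=1}^{2m}(-1)^k f_2(a_k)\begin{pmatrix}S(\cdot,a_k)\\ H(\cdot,a_k)\end{pmatrix},
\end{equation*}
where $E$ has rank at most $2m$.

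\textbf{Step 2 (first commutation).} Write $U=\binom{S}{H}$ and $V=(I\;\;D)$. Then $VU=S+DH=2S$, because $\partial_xH=S$ means $DH=S$ as operators. By \eqref{1.4}, $\det(I-UV)=\det(I-VU)=D_{2S}(\Delta)$. Assuming $I-2S$ is invertible, the commutation formula \eqref{1.2} gives
\begin{equation*}
(I-UV)^{-1}=I+U(I-2S)^{-1}V.
\end{equation*}

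\textbf{Step 3 (second commutation).} Factor
\begin{equation*}
\det(I-M)=\det(I-UV)\,\det\big(I+(I-UV)^{-1}E\big).
\end{equation*}
Write $E=\Phi\Lambda$, where $\Lambda:L^2\times L^2\to\bbC^{2m}$ sends $f\mapsto\big((-1)^kf_2(a_k)\big)_k$ and $\Phi:\bbC^{2m}\to L^2\times L^2$ sends $e_k\mapsto\binom{S(\cdot,a_k)}{H(\cdot,a_k)}$. By \eqref{1.4},
\begin{equation*}
\det\big(I+(I-UV)^{-1}\Phi\Lambda\big)=\det_{\bbC^{2m}}\big(I+\Lambda(I-UV)^{-1}\Phi\big).
\end{equation*}
It remains to evaluate the $2m\times 2m$ matrix entries. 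Here $\Lambda$ only reads second components, and $\Phi e_k=U\big(H(\cdot,a_k)\text{-type data}\big)$ up to the relation $S(\cdot,a_k)=\partial_xH(\cdot,a_k)$. Using this, together with $V\Phi e_k=S(\cdot,a_k)+D H(\cdot,a_k)=2S(\cdot,a_k)$, the entries should collapse to $(-1)^k\big((I-2S^{\top})^{-1}H\big)(a_k,a_j)$. This is to be checked against \eqref{3.84}, with the transpose arising from the resolvent identity $(I-2S)^{-1}S=S(I-2S)^{-1}$ and the antisymmetry of $H$. That would give the factor $\det(\delta_{j,k}-F_{j,k})$.

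\textbf{Main obstacle.} I expect the hardest part to be the rigorous justification and bookkeeping. The operator $D$ is unbounded, so $U$, $V$ and the point evaluations in $\Lambda$ are not bounded maps on $L^2$, and \eqref{1.4} cannot be applied verbatim. I would first prove the identity for smooth kernels, working on $H^1(\Delta)$-type spaces where $\Lambda$ and $D$ are bounded and all products involved are trace class. I would then pass to the stated hypotheses by continuity of Fredholm determinants in trace norm. The second delicate point is tracking signs and transposes so that the finite matrix comes out exactly as $\delta_{j,k}-F_{j,k}$ with $F_{j,k}$ as in \eqref{3.84}.
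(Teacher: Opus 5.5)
Your proposal is correct. It reaches \eqref{3.83} by a genuinely different route from the paper's.

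\textbf{The paper's route.} The paper factors $M$ exactly as $\begin{pmatrix} \partial_x & 0\\ 0 & I\end{pmatrix}\begin{pmatrix} H & S^{\top}\\ H & S^{\top}\end{pmatrix}$. This uses $\partial_x H=S$ and, through the skew-symmetry of $K_{2,2}$, $\partial_x S^{\top}=G$. The boundary terms appear only after commuting, inside the operator $C$ of \eqref{3.90A}, which still acts on $L^2(\Delta)\times L^2(\Delta)$. The reduction to one copy of $L^2(\Delta)$ is then done by conjugating with Pauli matrices and by block row/column operations. These work because $C$ has two equal block rows, and they bring $\det(I-N)$ to $\det\big(I-2S^{\top}-\sum_k(-1)^kH(\delta_{a_k}\otimes\delta_{a_k})\big)$. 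Finally $\det(I-2S^{\top})$ is factored out and the rank-$2m$ remainder is reduced to a finite determinant.

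\textbf{Your route.} You pull the boundary terms out first as the rank-$2m$ defect $E$. Your single commutation $UV\to VU$ then already passes from $L^2(\Delta)\times L^2(\Delta)$ to $L^2(\Delta)$ and lands exactly on $2S$, with no boundary terms. The $2m\times 2m$ matrix appears naturally as $\Lambda(I-UV)^{-1}\Phi$ via the push-through identity. No block manipulations are needed, and the skew-symmetry of $\partial_y S$ is never used.

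\textbf{Rigor.} Both arguments are equally formal, since both involve $d/dx$ and point evaluations. The paper does not address this. Your repair would justify either route: work on $L^2(\Delta)\times H^1(\Delta)$, where $U$, $V$ and $\Lambda$ are bounded and $U$ is trace class when $S$ and $H$ are, and compare determinants via Lidskii.

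\textbf{Correction to the last step.} The entries do not collapse to $F_{j,k}$ literally.
\begin{itemize}
\item Since $V\Phi e_k=2S(\cdot,a_k)$, the second component of $(I-UV)^{-1}\Phi e_k$ is $H(\cdot,a_k)+H(I-2S)^{-1}2S(\cdot,a_k)$. This is the kernel of $H(I-2S)^{-1}$ at $(\cdot,a_k)$, because $H+H(I-2S)^{-1}2S=H(I-2S)^{-1}$. That resolvent identity, not $(I-2S)^{-1}S=S(I-2S)^{-1}$, is the simplification you need.
\item Using $H^{\top}=-H$, you get $\big(I+\Lambda(I-UV)^{-1}\Phi\big)_{j,k}=\delta_{j,k}-(-1)^j\big((I-2S^{\top})^{-1}H\big)(a_k,a_j)$.
\item This matrix equals $\mathcal{D}\,(\delta_{j,k}-F_{j,k})_{j,k=1}^{2m}\,\mathcal{D}$ with $\mathcal{D}=\mathrm{diag}\big((-1)^1,\dots,(-1)^{2m}\big)$. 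The two determinants therefore agree, which is all \eqref{3.83} requires.
\end{itemize}
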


Formula \eqref{3.83} is a significant simplification of $D_M(\Delta)$ as the right-hand side of \eqref{3.83} involves a finite-size determinant and a Fredholm determinant on $L^2(\Delta)$ instead of $L^2(\Delta) \times L^2(\Delta)$. Further simplification of \eqref{3.83} requires more assumptions on the main kernel $S(\dott,\dott)$ as will be shown.

\begin{proof}[Proof of Lemma~\ref{l3.7}]
Employing the properties of $S$, one observes that $M$ can be expressed as 
\begin{equation}
M = \begin{pmatrix} \partial_x & 0 \\ 0 & I \end{pmatrix} \begin{pmatrix} H & S^{\top} \\ H & S^{\top} \end{pmatrix}.    \lb{3.85}
\end{equation} 
On the other hand, 
\begin{align}
C &\equiv \begin{pmatrix} H & S^{\top} \\ H & S^{\top} \end{pmatrix} \begin{pmatrix} \partial_x & 0 \\ 0 & I \end{pmatrix}     \no \\
&= \begin{pmatrix} S^{\top} + \sum_{k=1}^{2m} (-1)^k H(\delta_{a_k} \otimes \delta_{a_k}) & S^{\top} \\[1mm] 
S^{\top} + \sum_{k=1}^{2m} (-1)^k H(\delta_{a_k} \otimes \delta_{a_k}) & S^{\top}   \end{pmatrix}.    \lb{3.90A} 
\end{align}
Here 
\begin{equation}
(H(\delta_{a_k} \otimes \delta_{a_k}) f)(x) = H(x,a_k) f(a_k),
\end{equation}
and the expression \eqref{3.90A} for the operator $C$ is obtained by integrating 
\begin{equation}
(H f_x)(x) = \int_{\Delta} H(x,y) f_y(y) \, dy
\end{equation} 
by parts for smooth functions $f$. It follows by commutation that $C$, and consequently, $N=\sigma_3\sigma_1 C \sigma_1\sigma_3$, are isospectral to $M$ (with $\sigma_j$, $1\leq j \leq 3$, the standard Pauli matrices). 

Hence, 
\begin{equation}
D_M(\Delta) = D_N(\Delta) = \begin{pmatrix} I - S^{\top} & S^{\top} + \sum_{k=1}^{2m} (-1)^k H(\delta_{a_k} \otimes \delta_{a_k}) 
\\[1mm]
S^{\top} & I - S^{\top} - \sum_{k=1}^{2m} (-1)^k H(\delta_{a_k} \otimes \delta_{a_k})  \end{pmatrix}. 
\end{equation}
Adding the first row to the second row, and then subtracting the second column from the first column results in \eqref{3.83}, \eqref{3.84}. 
\end{proof}

In order to study the asymptotic behavior of $\Pf(J-K)$ as the size of the intervals $\Delta$ becomes large, the authors in \cite{BJ25} specify $S(\dott,\dott)$ to the following form
\begin{equation}
S(x,y) = \f{1}{2} \bigg[\int_0^{\infty} \phi(x+u) \phi(u+y)\,du - 2^{-1} \phi(x) \int_y^{\infty} \phi(v) \, dv\bigg], \quad 
x,y \in \Delta = (t,\infty),    \lb{3.90} 
\end{equation}
where $\phi \in C^1(\bbR)$. Under appropriate conditions on $\phi$ and $\phi'$, the expressions $S(x,y)$, $G(x,y)=- \partial_y S(x,y)$, and $H(x,y) = - \int_x^{\infty} S(u,y) \, du$ are admissible choices for a trace class operator $M$ as in \eqref{3.85}. For such $S$, $G$, and $H$, set 
\begin{equation} 
D_M(t) = \lim_{a_2 \to \infty} D_M((t,a_2)), \quad t \in \bbR. 
\end{equation} 
Then one obtains 
\begin{equation}
D_M(t) = D_Q(t) \bigg[1 + \f{1}{2} \int_t^{\infty} ((I-Q)^{-1} \phi)(x) \Phi(x)\, dx\bigg], \quad \Phi(x) = \int_x^{\infty} \phi(y) \, dy, 
\lb{3.91}
\end{equation}
provided $I - 2S$ and $I-Q$ are boundedly invertible on $L^2((t,\infty))$. Here
$Q \in \cB_1(L^2((t,\infty)))$ is the trace class operator with integral kernel
\begin{equation}
Q(x,y) = \int_0^{\infty} \phi(x+u) \phi(u+y) \, du, \quad x,y \in (t,\infty),
\end{equation}
and $D_Q(t)$ represents the Fredholm determinant of $Q$ on $L^2((t,\infty))$. 

Formula \eqref{3.91} was obtained by Krajenbrink \cite{Kr21} in 2021 for functions $\phi$ as above. In the case that $\phi(\dott)$  is the Airy function $Ai(\dott)$, \eqref{3.91} was already known to Tracy and Widom since the mid 1990's, see \cite{TW96}. In \cite{BJ25} the authors obtain \eqref{3.91} directly from \eqref{3.83} by an application of the Sherman--Morrison identity.

From \eqref{3.91} one infers that the asymptotic behavior as $t \to \infty$ of Fredholm determinants, and hence, Pfaffians, in the symplectic derived class with $S(\dott,\dott)$ given by \eqref{3.90}, reduces to analyzing $D_Q(t)$ as $t \to \infty$, where 
\begin{equation}
Q(x,y) = \int_0^{\infty} \phi(x+u) \phi(u+y) \, du.
\end{equation}
In the special case where $\phi(x) = Ai(x)$, $Q(\dott,\dott)$ is an integrable operator given by 
\begin{equation}
Q(x,y) = \f{Ai(x) Ai(y) - Ai'(x) Ai'(y)}{x-y}
\end{equation}
and so $D_Q(t)$ can be analyzed as $t \to \infty$ by the familiar Riemann--Hilbert methods as in Subsection \ref{sec7}. If $\phi(\dott)$ can be analytically continued into $\bbC$ in an appropriate manner, it is possible to convert $Q$ into an integrable operator in Fourier variables for which Riemann--Hilbert methods apply: this is the approach of Bertola and Cafasso \cite{BC12} and also of Baik and Bothner \cite{BB20}. For general $\phi$, however, the evaluation of $D_Q(t)$ as $t \to \infty$, presents a significant new challenge, and in remarkable, ingenious work, Krajerbrink \cite{Kr21}, inspired by earlier work of Le Doussal, Majumdar and Schehr, showed how to relate (in fact, quite mysteriously) $D_Q(t)$ to the solution of a RHP for the Zakharov--Shabat system. More precisely (see \cite{Bo23}): Consider the solution $X(\dott)$ of the RHP, \\[1mm]
$(1)$ $X(z) \in \bbC^{2 \times 2}$ is analytic in $\bbC\backslash \bbR$. \\[1mm] 
$(2)$ $X_+(\xi) = X_-(\xi) \begin{pmatrix} 1 - |r(\xi)|^2 & - \ol{r(\xi)} e^{-it\xi} \\[1mm] 
r(\xi) e^{it\xi} & 1 \end{pmatrix}$, $\xi \in \bbR$,  \\[1mm] 
\hspace*{4.8mm}  where $X_{\pm}(\xi) = \lim_{\varepsilon\downarrow 0} X(\xi \pm i \varepsilon)$, \, $r(\xi) = -i \int_{\bbR} \phi(y) e^{-i \xi y} \, dy$,
\, $\xi \in \bbR$. \\[1mm] 
$(3)$ $X(z) = I + X_1 z^{-1} + \oh\big(z^{-1}\big)$ as $z\to \infty$ in $\bbC \backslash \bbR$, where $X_1 = (X_{1,j,k}(t,\phi))_{j,k=1}^2$. \\[2mm]
\indent 
Then, provided $I - Q$ is boundedly invertible in $L^2((t,\infty))$, the above RHP is uniquely solvable and one obtains
\begin{equation}
\f{d}{dt} \ln(D_Q(t)) = i X_1''(t,\phi).
\end{equation}

Applying the steepest decent method to the RHP $(1)$\,$(2)$\,$(3)$ yields the asymptotic behavior of $D_M(t)$ as $t \to \infty$ to order $\Oh(t^{-\infty})$ (see \cite[Theorem~3.4]{BJ25}). 

Fredholm Pfaffians occur in many different areas of mathematics and mathematical physics, including interacting particle systems, increasing subsequences, random walker analysis, and random matrix theory for orthogonal and symplectic ensembles (see \cite{BJ25} and the references cited therein). 

We conclude this subsection with the following observation due to E.\ Rains and reported to P.D. by C.\ Sinclair at Snowbird in 2007:
\begin{remark} \lb{r3.8}
For $n \in \bbN$, $n$ odd, let $A, B, C \in \bbC^{n \times n}$ with $B$ and $C$ skew-symmetric and invertible. Then
\begin{equation}
\f{\Pf\Big(\big(C^{-1}\big)^{\top} - ABA^{\top}\Big)}{\Pf\Big(\big(C^{-1}\big)^{\top}\Big)} = 
\f{\Pf\Big(\big(B^{-1}\big)^{\top} - A^{\top}BA\Big)}{\Pf\Big(\big(B^{-1}\big)^{\top}\Big)}.          \lb{3.96}
\end{equation}
Relation \eqref{3.96} follows from $\det(I - EF) = \det(I - FE)$ for suitable, $E,F$, and hence can be viewed as the ``Pfaffian version'' of $\det(I - EF) = \det(I - FE)$. 
\hfill $\diamond$
\end{remark} 

\subsection{The Initial Boundary Value Problem for the Focusing Gross--Pitaevs\-kii Equation}\label{sec3.9}
In 2009, Holmer and Zworski \cite{HZ09} considered solutions to the focusing Gross--Pitaevskii equation with a delta potential supported at the origin,
\begin{equation}
\begin{cases} i u_t + 2^{-1} u_{xx} + |u|^2 u + q \delta_0(\dott) u = 0,  \quad q \in \bbR,    \\
u(x,0) = v_{\lambda} (x) + w(x); \quad (x,t) \in \bbR^2,
\end{cases}     \lb{3.97}
\end{equation}
where $|q|$ is small and $w(\dott)$ is even and of order $\Oh(q)$. In addition, $v_{\lambda}(\dott)$ has the special form,
\begin{equation}
v_{\lambda}(x) = \lambda \sech\big(\lambda |x| + \tanh^{-1}(q/\lambda)\big), \quad \lambda > |q|,
\end{equation}
and corresponds to the nonlinear ground state of a condensate variational problem in one dimension. Associated with $v_{\lambda}$ one has the stationary solution
\begin{equation}
u_{\lambda}(x,t) = e^{i \lambda^2 t/2} v_{\lambda}(x), \quad (x,t) \in \bbR^2,
\end{equation}
for \eqref{3.97} corresponding to $w \equiv 0$. The main result in \cite{HZ09} concerns the stability of this ground state condensate under even perturbations $w= \Oh(q)$, $q \ll 1$.

In \cite{HZ09}, using dynamical systems methods, Holmer and Zworski proved that for even $w = \Oh(q)$, the solution $u(\dott,t)$ of \eqref{3.97} is stable for times $1 \leq t \leq c |q|^{-2/7}$, 
$c$ a constant, and evolves at the origin $x=0$, in particular, as
\begin{equation}
u(0,t) = e^{i \hatt \lambda (t/2)} \bigg[\hatt \lambda - [2/(\pi t)]^{1/2} e^{i [(\hatt \lambda)^2 (t/2) + (\pi/4)]} 
\int_0^{\infty} w(x)\,dx \bigg] + \Oh\big(q\big/t^{3/2}\big)     \lb{3.100}
\end{equation}
for some explicit $\hatt \lambda$, $\hatt \lambda \sim v_{\lambda}(0)$, for $q$ sufficiently small.

For $u(x,t=0) = u_0(x)$, $x \in \bbR$, even, \eqref{3.97} reduces to an equation in the first quadrant $(x,t) \in (0,\infty)^2$. This is because the delta function supported at $x=0$ introduces a jump in the derivative of $u$ at $x=0$, 
\begin{equation}
2^{-1} [u_x(0_+,\dott) - u_x(0_-,\dott)] + q u(0,\dott) = 0,
\end{equation}
which, when $u(x,\dott)$ is even in $x \in \bbR$, reduces to 
\begin{equation}
u_x(0_+,\dott) + q u(o,\dott) = 0. 
\end{equation}
Thus, in the case where $u(x,0)$, and hence $u(x,t)$ is even in $x$, \eqref{3.97} reduces to the initial boundary value problem (IBV) for the focusing NLS equation on the half-line,
\begin{equation}
\begin{cases}
i u_t + 2^{-1} u_{xx} + |u|^2 u = 0, &(x,t) \in (0,\infty)^2, \\
u(x,0) = u_0(x), &x \in (0,\infty),   \\
u_x(0,t) + q u(0,t) = 0, &t \in (0,\infty) \quad \text{(a Robin boundary condition at $x=0$).}
\end{cases}     \lb{3.103} 
\end{equation}

As noted by A.\ Fokas \cite{Fo89}, \cite{Fo02}, the IBV \eqref{3.103} is ``integrable'' in the sense that it can be solved by using linear equations only (see \cite{IS13}).

Now, on the whole line, $x \in \bbR$, solutions of the NLS equation can be evaluated asymptotically as $t \to \infty$ using Riemann--Hilbert/Steepest-Descent methods, but these methods break down on the half-line as in \eqref{3.103}. In the linear case, $i u_t + 2^{-1} u_{xx} = 0$, problems on the half-line with Dirichlet (i.e., $u(0,t)=0$) or Neumann (i.e., $u_x(0,t)=0$) boundary conditions extend easily to problems on the full line upon setting $u(x,0) = - u(-x,0)$ or $u(x,0)=u(-x,0)$, respectively, for $x < 0$. The same applies to the Robin boundary condition in the case $q<0$ provided one extends $u_x(x,0) + q u(x,0)$ as an odd function to $x<0$. These extensions make it possible to solve the IVB's simply by using the Fourier transform. In the nonlinear case \eqref{3.103} such method of images extensions clearly work for the Dirichlet and Neumann cases, but not for the Robin boundary condition. In \cite{DP11}, the authors show how to use some remarkable calculations of Bikbaev and Tarasov \cite{BT91}, \cite{Ta88}, \cite{Ta91}, which are based in part on Khabibullin \cite{Kh91}, to construct a nonlinear method of images for the IVB \eqref{3.103}. More precisely, if $u = u(x,t)$ solves \eqref{3.103} for $(x,t) \in (0,\infty)^2$, then Bikbaev and Tarasov construct a new solution $\wti u = \wti u(x,t)$ of \eqref{3.103} for $(x,t) \in (0,\infty)^2$, with the property that 
\begin{equation}
\hatt u(x,t) = \begin{cases}
u(x,t), & x \in [0,\infty), \\
\wti u(-x,t), & x \in (-\infty,0),
\end{cases} \quad t \in (0,\infty),     \lb{3.104} 
\end{equation} 
solves NLS on the whole line 
\begin{equation}
\begin{cases}
i \hatt u_t + 2^{-1} \hatt u_{xx} + \big|\hatt u\big|^2 \hatt u =0, & (x,t) \in \bbR \times (0,\infty), \\
\hatt u(x,0) = u_0(|x|), & x \in \bbR,
\end{cases}     \lb{3.105} 
\end{equation}
and, {\it automatically,}
\begin{equation}
\hatt u_x(0,t) + q \hatt u (0,t) = 0, \quad t \in (0,\infty).   \lb{3.106} 
\end{equation}
It follows that the IVP \eqref{3.103} can then be evaluated asymptotically as $t \to \infty$, employing the well-developed Riemann--Hilbert/Steepest Descent methods. As shown in \cite{DP11}, the solution $u=u(x,t)$ for \eqref{3.97}, in fact, remains stable as in \eqref{3.100} for times up to $q^{-2}$, but for times $t \gg q^{-2}$ the solution breaks into two soliton solutions moving independently. 

To construct $\wti u$, Bikbaev and Tarasov used B\"acklund transformations. We recall that a B\"acklund transformation takes a solution $\omega(t)$ of some dynamical system to the solution $\wti \omega(t)$ of some, in general different, dynamical system. One calls $\wti \omega(t)$ the B\"acklund transform of $\omega(t)$. For example, 
\begin{equation}
\omega(x,t) = e^{\beta x - 4 \beta^3 t} + \alpha e^{- \beta x + 4 \beta^3 t}, \quad \alpha \in (0,\infty), \; (x,t) \in \bbR^2,
\end{equation}
solves the linear equation
\begin{equation}
\omega_t + 4 \omega_{xxx} = 0,   \lb{3.108} 
\end{equation}
but 
\begin{equation}
\wti \omega (x,t) = - 2 \f{d^2}{dx^2} \, \ln(\omega(x,t)), \quad (x,t) \in \bbR^2,
\end{equation}
solves the $\KdV$ equation
\begin{equation}
\wti \omega_t - 6 \wti \omega \wti \omega_x + \wti \omega_{xxx} = 0, \quad (x,t) \in \bbR^2.    \lb{3.110} 
\end{equation}
Thus, $\omega \mapsto \wti \omega$ is a B\"acklund transformation from \eqref{3.108} to \eqref{3.110}. 

The first step in the construction of $\wti u(x,t)$, $(x,t) \in (0,\infty)^2$, in \eqref{3.104} consists in noting that if $u=u(x,t)$ with $u(\dott,t) \in L^1(\bbR)$, $t \in (0,\infty)$, and $Q(t) = \left(\begin{smallmatrix} 0 & u(x,t) \\ - \ol{u(x,t)} & 0 \end{smallmatrix}\right)$, $t \in (0,\infty)$, then the equation
\begin{equation}
\begin{cases}
P_x(t) = (Q(t) - i [\sigma,P(t)])P(t), \quad (x,t) \in (0,\infty)^2,  \\
P(0) = - i q \sigma_3,
\end{cases}     \lb{3.111} 
\end{equation}
has a unique $2 \times 2$ matrix-valued solution $P=P(x,t)$, $(x,t) \in [0,\infty)^2$. Here $\sigma_3 = \begin{pmatrix} 1 & 0 \\ 0 & -1 \end{pmatrix}$, $\sigma = 2^{-1} \sigma_3$, and $q\in \bbR$ is the constant in \eqref{3.97}. Next, set 
\begin{equation}
\wti u(x,t) = u(x,t) -i  P_{1,2}(x,t), \quad (x,t) \in (0,\infty)^2,    \lb{3.112}
\end{equation}
and then
\begin{equation}
\wti Q(x,t) \equiv Q(x,t) - i [\sigma,P(x,t)] = \begin{pmatrix} 0 & \wti u(x,t) \\[1mm] - \ol{\wti u(x,t)} & 0 \end{pmatrix}, 
\quad (x,t) \in (0,\infty)^2.
\end{equation}

Commutation now enters the story in the following manner: one recalls that NLS has a Lax pair representation, that is, if 
\begin{align}
& L = L(t) = iz \sigma - Q(\dott,t), \quad Q(x,t) = \begin{pmatrix} 0 & u(x,t) \\[1mm] - \ol{u(x,t)} & 0 \end{pmatrix},   \lb{3.114} \\
& E = E(t) = - (i/2) z^2 \sigma - 2^{-1} z Q(\dott,t) + \f{1}{2} \begin{pmatrix} i |u(x,t)|^2 & i u_x(x,t) \\[1mm] i \ol{u_x(x,t)} & -i |u(x,t)|^2 \end{pmatrix};   \lb{3.115} \\
& \hspace*{8.85cm}  (x,t) \in \bbR^2.    \no 
\end{align}
Then the differential expressions $\partial_x I_2 - L$ and $\partial_t I_2 - E$ commute, that is,
\begin{equation}
(\partial_x I_2 - L)(\partial_t I_2 - E) = (\partial_t I_2 - E) (\partial_x I_2 - L),   \lb{3.116}
\end{equation} 
if and only if $u = u(x,t)$ solves NLS on $\bbR^2$. In particular, it follows from \eqref{3.116} that $\partial_x I_2 - L(t)$ undergoes an isospectral deformation under NLS. 

In the language of commutation, NLS represents an ``auto-commutation,'' $AB \to BA=AB$, that is, a commutation where $AB$ equals $BA$. Said differently, NLS is a fixed point for commutation. 

A direct computation then shows that if $P$ solves \eqref{3.111}, and 
\begin{equation}
\wti L(t) = i z \sigma - \wti Q(\dott,t), \quad \wti Q (x,t) = \begin{pmatrix} 0 & \wti u(x,t) \\[1mm] - \ol{\wti u(x,t)} & 0 \end{pmatrix}, 
\quad (x,t) \in (0,\infty) \times \bbR,
\end{equation}
then
\begin{equation}
(z I_2 + P)(\partial_x I_2 - L) = \big(\partial_x I_2 - \wti L\big)(z I_2 + P). 
\end{equation}
In particular, it follows that if $u=u(x,t)$ solves NLS, and hence 
\begin{equation}
L(t) = i z \sigma - \begin{pmatrix} 0 & u(x,t) \\[1mm] - \ol{u(x,t)} & 0 \end{pmatrix}
\end{equation} 
undergoes an isospectral deformation, then so does 
\begin{equation}
\wti L(t) = i z \sigma - \begin{pmatrix} 0 & \wti u(x,t) \\[1mm] - \ol{\wti u(x,t)} & 0 \end{pmatrix},
\end{equation} 
where $\wti u(x,t)$ is given by \eqref{3.112}, as $\wti L(t)$ is given by conjugation of $L(t)$,
\begin{equation}
\partial_x I_2 - \wti L(t) = (z I_2 +P(t)) (\partial_x I_2 - L(t)) (z I_2 +P(t))^{-1},
\end{equation} 
apart from the irrelevant singularities of $zI_2 +P$. 

The question now becomes, which isospectral deformations is $\wti L(t)$ undergoing?

This is settled by noting that 
\begin{equation} 
\big(\partial_t I_2 - \wti E\big)(z I_2 +P) = (z I_2 + P)(\partial_t I_2 - E)     \lb{3.122} 
\end{equation}
if and only if 
\begin{equation} 
\big(\partial_t I_2 - \wti E\big) \big(\partial_x I_2 - \wti L\big) = \big(\partial_x I_2 - \wti L\big) \big(\partial_t I_2 - \wti E\big), 
\lb{3.123}
\end{equation}
 where $\wti E$ is given by \eqref{3.115} with $u$ replaced by $\wti u$. Again, \eqref{3.122} is proved by a simple, direct computation. In particular, it follows from \eqref{3.116} and \eqref{3.123} that $\wti u(x,t)$ solves NLS for $(x,t) \in (0,\infty)^2$. 
 
 From these calculations one infers that for $q$ fixed, 
 \begin{align}
 \begin{split} 
& u(x,t) \longrightarrow Q(x,t) = \begin{pmatrix} 0 & u(x,t) \\ - \ol{u(x,t)} & 0 \end{pmatrix} \longrightarrow P(x,t)   \\
& \quad  \longrightarrow \wti Q(x,t) = \begin{pmatrix} 0 & \wti u(x,t) \\ - \ol{\wti u(x,t)} & 0 \end{pmatrix} \longrightarrow \wti u (x,t),  
\end{split} \\ 
& \hspace*{5.1cm} (x,t) \in (0,\infty)^2,     \no 
 \end{align} 
is a B\"acklund transformation taking a solution $u=u(x,t)$ of NLS on $(0,\infty)^2$ to a new solution $\wti u = \wti u(x,t)$ of NLS on $(0,\infty)^2$. Upon insertion of $\wti u(x,t)$ into \eqref{3.104}, it becomes a matter of direct computations to verify that \eqref{3.106} is satisfied automatically. 

Finally, we note that Its and Shepelsky \cite{IS13} showed how to use Fokas' method in \cite{Fo89}, \cite{Fo02} to provide another method to analyze the long-time behavior of the solution $u=u(x,t)$ of \eqref{3.97}. Fokas' method leads to a Riemann--Hilbert problem on a cross in the energy plane. In \cite{IS13} the authors show how to ``unfold'' the cross to the real axis where the standard Riemann--Hilbert/Steepest Descent method for NLS can now be applied.

\subsection{A Generalization of Commutation}\label{sec4a}
In 2009, Jim Portegies, then a student at the Courant Institute, noted the following generalizations of the commutation formulas \eqref{1.2} and \eqref{eq3}.

Let $A, B, C \in \cB(X)$ for some Banach space $X$ and suppose in addition that 
\begin{align}
& A \text{ is boundedly invertible,} \\ 
& [A,C] =0 
\end{align}
(with $[\dott,\dott]$ abbreviating the commutator). Then, if $A+CB$ is boundedly invertible, so is $A+BC$ and 
\begin{equation}
A(A+BC)^{-1} + B(A+CB)^{-1} C = I.      \lb{2.19}
\end{equation}
Conversely, if $A+BC$ is boundedly invertible, so is $A+CB$ and 
\begin{equation}
(A+CB)^{-1}A + C(A+BC)^{-1} B = I. 
\end{equation}
As a result, if $A$ is boundedly invertible, then
\begin{equation}
A+CB \, \text{ is boundedly invertible if and only if } \, A+BC \, \text{ is boundedly invertible.} 
\end{equation} 
Replacing $A$ by $A-zI$, $z \in \rho(A)$, one infers that 
\begin{equation}
\sigma(A+BC) \setminus \sigma(A) = \sigma(A+CB) \setminus \sigma(A).    \lb{2.22}
\end{equation}

If $A$ is boundedly invertible and $[A,B]=0$, but $[A,C] \neq 0$, then \eqref{2.19} and \eqref{2.22} may fail and must be replaced by 
\begin{equation}
(A+BC)^{-1}A + B(A+CB)^{-1} C = I.       \lb{2.23} 
\end{equation}
and 
\begin{equation}
A(A+CB)^{-1} + C(A+BC)^{-1} B = I,       \lb{2.24} 
\end{equation}
in the sense that if $A+CB$ is boundedly invertible, then so is $A+BC$ and \eqref{2.23} holds, and if $A+BC$ is boundedly invertible, then so is $A+CB$ and \eqref{2.24} holds. Thus, \eqref{2.22} also remains valid in this case. 

The proof of this generalization follows from \eqref{1.2}, \eqref{eq3}, and \eqref{1.4}.

\subsection{A $2\times2$ Block Operator Approach to Commutation Formulas: The  Banach Space Case}\label{sec12}
In this subsection we return to the beginning of our subject and recall in detail the principal commutation formulas associated with a pair of closed, densely defined operators $A$ and $B$ in complex Banach spaces based on a $2 \times 2$ operator block matrix approach. In particular, 
we derive a few additional facts regarding the algebraic multiplicity of nonzero, discrete eigenvalues of $AB$ and $BA$ and the underlying $2 \times 2$ block operator\footnote{In the physics literature, $Q$ is sometimes called a supercharge (a.k.a. a supersymmetric Dirac-type operator).} $Q = \left(\begin{smallmatrix} 0_{X_1} & B \\ A & 0_{X_2} \end{smallmatrix}\right)$.

We start by making the following assumptions:

\begin{hypothesis} \lb{h2.1}
Suppose $X_j$, $j=1,2$, are complex Banach spaces.  \\
$(i)$ Consider the closed and densely defined operators, 
\begin{align}
\begin{split} 
& A \colon \dom(A) \to X_2, \quad \dom(A) \subseteq X_1,   \lb{2.1} \\
& B \colon \dom(B) \to X_1, \quad \dom(B) \subseteq X_2.
\end{split}
\end{align}
$(ii)$ Assume that $AB$ and $BA$ $($both naturally defined\,$)$ are closed and densely defined in $X_2$ and $X_1$, respectively. \\
$(iii)$ $\rho(AB) \neq \emptyset$ and $\rho(BA) \neq \emptyset$. 
\end{hypothesis}

Given Hypothesis \ref{h2.1} we also introduce $2 \times 2$ block operator $Q$ in $X = X_1 \dotplus X_2$ given by 
\begin{equation}
Q = \begin{pmatrix} 0_{X_1} & B \\ A & 0_{X_2} \end{pmatrix}, \quad \dom(Q) = \dom(A) \dotplus \dom(B).
\end{equation}

Then a summary of the results in \cite{HKM00} and \cite{HM01} reads as follows:

\begin{theorem} \lb{t2.2} $($\cite{HKM00}, \cite{HM01}$)$. 
Assume Hypothesis \ref{h2.1}. Then the following items $(i)$--$vii)$ hold: \\
$(i)$ One has 
\begin{equation}
\rho(AB) \cap \rho(BA) \neq \emptyset.
\end{equation} 
$(ii)$ $Q$ is closed and densely defined in $X$, and 
\begin{equation}
\rho(Q) = \big\{z \in \bbC \, \big| \, z^2 \in \rho(AB) \cap \rho(BA)\big\}.
\end{equation}
$(iii)$ There holds
\begin{align}
& \{\lambda \in \bbC \, | \, \lambda^2 \in \sigma(AB) \backslash \{0\}\} = \sigma(Q) \backslash \{0\} 
= \{\lambda \in \bbC \, | \, \lambda^2 \in \sigma(BA) \backslash \{0\}\},   \\
& \{z \in \bbC \, | \, z^2 \in \rho(AB)\} \backslash \{0\}\}= \rho(Q) \backslash  \{0\} 
= \{z \in \bbC \, \big| \, z^2 \in \rho(BA) \backslash \{0\}\}, 
\end{align}
in particular,
\begin{equation}
\sigma(AB) \backslash \{0\} = \sigma(BA) \backslash \{0\}.
\end{equation}
Moreover,
\begin{equation}
\sigma_p(AB) \backslash \{0\} = \sigma_p(BA) \backslash \{0\}.
\end{equation}
$(iv)$ In addition one has 
\begin{align}
& \{\lambda \in \bbC \, | \, \lambda^2 \in \sigma_{ess,5}(AB) \backslash \{0\}\} = \sigma_{ess,5}(Q) \backslash \{0\} 
= \{\lambda \in \bbC \, | \, \lambda^2 \in \sigma_{ess,5}(BA) \backslash \{0\}\},   \\
& \{\lambda \in \bbC \, | \, \lambda^2 \in \sigma_d(AB) \backslash \{0\}\} = \sigma_d(Q) \backslash \{0\} 
= \{\lambda \in \bbC \, | \, \lambda^2 \in \sigma_d(BA) \backslash \{0\}\}.
\end{align}
$(v)$ There holds
\begin{equation}
\sigma_p(Q) \cup \sigma_p(-Q) = \{\lambda \in \bbC \, | \, \lambda^2 \in \sigma_p(AB) \cup \sigma_P(BA)\}.  
\end{equation}
$(vi)$ The operator 
\begin{equation}
Q^2 = \begin{pmatrix} BA & 0_{X_2} \\ 0_{X_1} & AB \end{pmatrix}, \quad \dom\big(Q^2\big) = \dom(BA) \dotplus \dom(AB), 
\end{equation}
is closed and densely defined in $X$. \\
$(vii)$ The resolvent of $Q$ in $X$ is of the form
\begin{align}
(Q - z I_X)^{-1} = \begin{pmatrix} z (BA - z^2 I_{X_1})^{-1} & B(AB - z^2 I_{X_2})^{-1} \\
A(BA - z^2 I_{X_1})^{-1} & z (AB - z^2 I_{X_2} \end{pmatrix}, \quad z \in \rho(Q) \backslash\{0\},
\end{align}
and the commutation formulas 
\begin{align} 
& - \f{1}{z} I_{X_1} + \f{1}{z} \ol{B(AB - z^2 I_{X_2})^{-1}A\big|_{\dom(A)}} = z (BA - z^2 I_{X_1})^{-1}, \\
&  - \f{1}{z} I_{X_2} + \f{1}{z} \ol{A(BA - z^2 I_{X_1})^{-1}B\big|_{\dom(B)}} = z (AB - z^2 I_{X_2})^{-1}, \\ 
& \,\, \ol{(AB - z^2 I_{X_2})^{-1}A\big|_{\dom(A)}} = A (BA - z^2 I_{X_1})^{-1}, \\ 
& \,\, \ol{(BA - z^2 I_{X_1})^{-1}B\big|_{\dom(B)}} = B (AB - z^2 I_{X_2})^{-1};    \lb{2.17} \\ 
& \hspace*{2.8cm} z \in \rho(AB) \backslash \{0\} = \rho(BA) \backslash \{0\},   \no 
\end{align}
hold. 
\end{theorem}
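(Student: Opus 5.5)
The plan is to work directly with the block operator $Q$ and the identity $Q^2=\operatorname{diag}(BA,AB)$, and to read items $(i)$--$(vii)$ off an explicit candidate for the resolvent of $Q$.

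\emph{Key algebraic facts.} First I establish the intertwining relations on natural domains. For $z^2\in\rho(BA)\setminus\{0\}$ and $f\in\dom(B)$, the element $g=(BA-z^2)^{-1}Bf$ satisfies $Ag\in\dom(B)$. Applying $A$ to $(BA-z^2)g=Bf$ shows that $Ag-(AB-z^2)^{-1}\ldots$ can be handled through the formula quoted in Subsection~\ref{sec1}: the operator
\[
-z^{-2}\big(I-A(BA-z^2I)^{-1}B\big)
\]
is a right and left inverse of $AB-z^2 I$ on the appropriate domains. Closedness of $A$ and $B$, together with boundedness of $A(BA-z^2)^{-1}$ (closed and everywhere defined, so bounded by the closed graph theorem), makes every product well defined. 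This yields $\rho(BA)\setminus\{0\}\subseteq\rho(AB)$, and by symmetry equality. It also gives the four commutation formulas \eqref{2.17}, after taking closures of densely defined bounded operators.

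\emph{Resolvent of $Q$ and items (ii), (iii), (vi), (vii).} Next I define $R(z)$ as the $2\times2$ matrix in $(vii)$ and verify $(Q-z)R(z)=I$ on $X$ and $R(z)(Q-z)=I$ on $\dom(Q)$, using the relations above. This shows $z\in\rho(Q)$ whenever $z\neq0$ and $z^2\in\rho(AB)\cap\rho(BA)$. The converse uses $(Q-z)(Q+z)=Q^2-z^2$ and $\rho(Q)\ni z\Rightarrow$ a bounded inverse of $Q^2-z^2$ built from $(Q\mp z)^{-1}$, provided $-z\in\rho(Q)$. That proviso holds because the unitary-like map $U=\operatorname{diag}(I,-I)$ satisfies $UQU^{-1}=-Q$, so $\sigma(Q)=-\sigma(Q)$. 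For $z=0$ one needs a separate bookkeeping: $0\in\rho(Q)$ iff $A,B$ are both boundedly invertible, iff $0\in\rho(AB)\cap\rho(BA)$. Closedness of $Q$ is immediate from closedness of $A$ and $B$, and closedness of $Q^2$ follows from $Q^2-z^2=(Q-z)(Q+z)$ having a bounded inverse. Item $(i)$ then follows, since the point spectra and spectra of $AB$ and $BA$ agree off $0$ and both resolvent sets are nonempty open sets. If both resolvent sets contained only $0$ near the relevant region this would contradict openness, so some nonzero point lies in both.

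\emph{Eigenvalues, multiplicities, and essential spectra (iii)--(v).} For point spectra, $ABf=\lambda f$ with $\lambda\neq0$ gives $Bf\neq0$ and $BA(Bf)=\lambda Bf$. Then $(f_1,f_2)=(Bf,\mu f)$ with $\mu^2=\lambda$ is an eigenvector of $Q$. This proves $(v)$, the sign issue being handled by the symmetry $U$. For $(iv)$, the Riesz projections of $Q$ near $\pm\mu$ are obtained by contour integration of $R(z)$. The diagonal entries reproduce Riesz projections of $BA$ and $AB$ near $\mu^2$ under $w=z^2$, and $A$, $B$ restricted to these ranges are injective, so the ranks agree. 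This gives the statement for $\sigma_d$. For $\sigma_{ess,5}$, whose complement is the component structure of Fredholm points with discrete eigenvalues, I use that $Q^2-z^2$ is Fredholm iff $Q\mp z$ are, combined with the $\sigma_d$ result and the definition of $\sigma_{ess,5}$ as $\sigma$ minus discrete spectrum.

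I expect the main obstacle to be the unbounded-domain bookkeeping: verifying that the products in $R(z)$ map into $\dom(Q)$, and justifying the closures in \eqref{2.17}. Hypothesis~\ref{h2.1}$(ii)$ is what should make this work, and I would check it first for $B(AB-z^2)^{-1}$ by a graph-norm argument.
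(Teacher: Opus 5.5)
The paper does not prove this theorem; it quotes it from the work of Hardt, Konstantinov, and Mennicken. So I judge your argument on its own. Everything downstream of your first step is fine in outline. That first step, however, is where $\rho(BA)\backslash\{0\}=\rho(AB)\backslash\{0\}$ is supposed to come from, and it has a genuine gap.

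For $\lambda=z^2\in\rho(BA)\backslash\{0\}$, your candidate $R_\lambda:=-\lambda^{-1}\big(I-A(BA-\lambda I)^{-1}B\big)$ is defined only on $\dom(B)$. Boundedness of $A(BA-\lambda I)^{-1}$ gives no control of $A(BA-\lambda I)^{-1}B$, because $B$ is unbounded. What the computation actually yields is weaker:
\begin{itemize}
\item $AB-\lambda I$ is injective;
\item $\ran(AB-\lambda I)\supseteq\dom(B)$, a dense but possibly non-closed subspace.
\end{itemize}
Closedness of $AB$ (Hypothesis~\ref{h2.1}$(ii)$) does not repair this. You never use Hypothesis~\ref{h2.1}$(iii)$, and it is indispensable, as the following example shows. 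Take $X_1=X_2=\ell^2\oplus\ell^2$, let $T$ be closed, densely defined and unbounded in $\ell^2$, and set
\[
A=\begin{pmatrix} I & 0\\ 0 & 0\end{pmatrix}, \quad B=\begin{pmatrix} 0 & T\\ 0 & 0\end{pmatrix}, \quad \dom(B)=\ell^2\oplus\dom(T).
\]
Then $BA=0$ on all of $X_1$, and $AB=B$ is closed and densely defined. For $\lambda\neq0$, the equation $(AB-\lambda I)(x_1,x_2)^{\top}=(h_1,h_2)^{\top}$ forces $x_2=-\lambda^{-1}h_2\in\dom(T)$, so it is not solvable for every $h_2$. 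Hence $\sigma(AB)=\bbC$ while $\sigma(BA)=\{0\}$. Everything in Hypothesis~\ref{h2.1} holds except $\rho(AB)\neq\emptyset$. Consistently with this, $A(BA-\lambda I)^{-1}B=-\lambda^{-1}B$ is unbounded on $\dom(B)$.

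The fix uses a point $\nu\in\rho(AB)$. By the closed graph theorem, $B(AB-\nu I)^{-1}\in\cB(X_2,X_1)$. Hence
\[
S:=R_\lambda(AB-\nu I)^{-1}=-\lambda^{-1}\big[(AB-\nu I)^{-1}-A(BA-\lambda I)^{-1}\,B(AB-\nu I)^{-1}\big]\in\cB(X_2).
\]
Moreover, $R_\lambda$ maps $\dom(B)$ into $\dom(AB)$ with $(AB-\lambda I)R_\lambda h=h$, so $S$ maps into $\dom(AB)$. Then $f:=(AB-\nu I)^{-1}h+(\lambda-\nu)Sh$ satisfies $(AB-\lambda I)f=h$ for every $h\in X_2$. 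Together with injectivity and closedness of $AB$, this gives $\lambda\in\rho(AB)$. The reverse inclusion uses $\rho(BA)\neq\emptyset$ symmetrically. Only after this step are your matrix $R(z)$, the closures in $(vii)$, and item $(i)$ (via openness of $\rho(AB)\neq\emptyset$) justified.

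Two smaller points:
\begin{itemize}
\item In $(iv)$, the diagonal entries of $P(\mu;Q)$ are $\tfrac12 P(\mu^2;BA)$ and $\tfrac12 P(\mu^2;AB)$. To compare ranks you still need $P(\mu;Q)+P(-\mu;Q)=P(\mu^2;BA)\oplus P(\mu^2;AB)$, together with the symmetry $\mu\mapsto-\mu$.
\item In $(v)$, the case $\lambda=0$ needs the separate observation that $\ker(Q)=\ker(A)\oplus\ker(B)\neq\{0\}$ if and only if $\ker(AB)\neq\{0\}$ or $\ker(BA)\neq\{0\}$.
\end{itemize}
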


Here the notions of discrete and essential spectra are defined as follows: Let $Y$ be a complex Banach space and $T \colon \dom(T) \to Y$, $\dom(T) \subseteq Y$, a closed and densely defined operator. Then the discrete spectrum of $T$ is given by
\begin{align}
\sigma_d(T) &= \{\lambda \in \sigma_p(T) \,|\, \text{$\lambda$ is an isolated point of $\sigma(T)$}   \no  \\
& \hspace*{2.3cm} \text{ of finite algebraic multiplicity}\}    \no \\
&= \{\lambda \in \sigma_p(T) \,|\, \text{$\lambda$ is an isolated point of $\sigma(T)$}   \no \\
& \hspace*{2.3cm} \text{ with $\dim(\ran(P(\lambda; T))) < \infty$}\}     \lb{2.18} \\
&= \{\lambda \in \sigma_p(T) \,|\, \text{$\lambda$ is an isolated point of $\sigma(T)$ such that}    \no \\
& \hspace*{1.45cm} \text{$(T-\la I_{\cH})$ is Fredholm and $\ind(T-\la I_{\cH}) = 0$}\}.   \no
\end{align}
Here $P(\lambda_0; T)$, $\lambda_0 \in \sigma_p(T)$ and $\lambda_0$ an isolated point of $\sigma(T)$, represents the Riesz projection associated with $T$ and $\lambda_0 \in \sigma_p(T)$ in $Y$, that is, 
\begin{equation}
P(\lambda_0; T) = \f{-1}{2\pi i} \ointctrclockwise_{C(\lambda_0;r_0)} (T- \zeta I_Y)^{-1} \, d\zeta,
\end{equation} 
where $C(\lambda_0;r_0) = \partial D(\lambda_0;r_0)$ denotes the counterclockwise oriented circle with center $\lambda_0$ and radius $r_0 > 0$ chosen sufficiently small so that $\sigma(T) \cap \ol{D(\lambda_0;r_0)} = \{\lambda_0\}$, with $D(\lambda_0;r_0)$ the open disc with with center $\lambda_0$ and radius $r_0$.  
Any element of $\sigma_d(T)$ in \eqref{2.18} is called a {\it discrete eigenvalue of $T$} (sometimes also a {\it normal eigenvalue of $T$}). Among the possible definitions of the discrete spectrum of $T$ this definition singles out the smallest such set.

An alternative description of $\lambda_0 \in \sigma_d(T)$ utilizes the property of $(T - z I_Y)^{-1}$ being finitely meromorphic at $z=\lambda_0$ (i.e., $(T - z I_Y)^{-1}$ has a pole at $z=\lambda_0$ with residue given by $-P(\lambda_0; T)$ and the (finitely-many) coefficients of its Laurent expansion are finite-rank operators. 

The essential spectrum $T$ then is defined via 
\begin{equation}
\sigma_{ess,5}(T) = \sigma(T) \backslash \sigma_d(T).
\end{equation} 
In this manner $\sigma_{ess,5}(T)$ is the largest set associated with the standard possible definitions of essential spectra of $T$. For the definitions $\sigma_{ess,j}(T)$, $1 \leq j \leq 5$, and a comparison between them, see, for instance 
\cite[Sect.~III.7]{BC19}, \cite[Sect.~9.1]{EE18}, \cite{ELZ83}.

For general background regarding linear operators and their spectra, see, for instance, \cite[Chs.~1--3]{BHS20}, 
\cite[Chs.~4, 5, 8, 9, 11]{Da07}, \cite[Chs.~2, 3]{DM25}, \cite[Chs.~I--VIII]{GGK90}, \cite[Chs.~I--III]{GK69}, \cite[Ch.~V]{Ka80}, 
\cite[Ch.~X]{RS75}, \cite[Ch.~XI]{RS79}, \cite[Ch.~XIII]{RS78}, \cite[Parts~I--IV]{Sc12}, \cite[Chs.~2, 3, 5, 7]{Si15}, \cite[Chs.~4--9, 11]{We80}

\begin{remark} \lb{r2.3}
$(i)$ As discussed in \cite{HKM00}, and\cite{HM01}, several of the facts in Theorem~\ref{t2.2} hold under more general hypotheses on $A$ and $B$. We chose the current somewhat stronger assumptions on $A$ and $B$ to avoid a number of technicalities.  \\
$(ii)$ We also recall the fact (see, e.g., \cite[Theorem~III.5.29]{Ka80}): Suppose that  $X_1,X_2$ are reflexive and $T\colon \dom(T) \to X_2$, $\dom(T) \subseteq X_1$, is closable. Then $T^*$ is closed and densely defined and $\ol{T} = (T^*)^*$. \hfill $\diamond$
\end{remark} 

Next, we recall the following fact from \cite{De78}:

\begin{theorem} \lb{t2.4} $($\cite{De78}$)$. 
In addition to Hypothesis \ref{h2.1} suppose that $A \in \cB(X_1,X_2)$ and $B \in \cB(X_2,X_1)$. Then the following items $(i)$ and $ii)$ hold: \\
$(i)$ Let $z^2 \in \bbC\backslash \{0\}$, then the maps 
\begin{align}
& A \colon \ker(BA - z^2 T_{X_1}) \to \ker(AB - z^2 I_{X_2}) \, \text{ is surjective,}   \\
& B \colon \ker(AB - z^2 T_{X_1}) \to \ker(BA - z^2 I_{X_2}) \, \text{ is surjective.}
\end{align}
In particular, the geometric multiplicities of nonzero eigenvalues of $AB$ and $BA$ coincide. \\
$(ii)$ Suppose $\lambda_0 \in \sigma_d(AB) \backslash \{0\} = \sigma_d(BA) \backslash \{0\}$. Then the geometric and algebraic multiplicities of $\lambda_0$ as a nonzero eigenvalue of $AB$ and $BA$ coincide. 
\end{theorem}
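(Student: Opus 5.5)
The argument rests on the intertwining relations $A(BA) = (AB)A$ on $\dom$ and $B(AB) = (BA)B$, which hold trivially since $A,B$ are bounded. We also use the fact that for $z^2 \neq 0$ the maps $A$ and $B$ invert each other up to the scalar $z^2$ on the relevant subspaces.

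For item $(i)$, fix $\lambda = z^2 \neq 0$. If $(BA - \lambda I_{X_1})u = 0$, then $(AB - \lambda I_{X_2})Au = A(BA - \lambda I_{X_1})u = 0$, so $A$ maps $\ker(BA - \lambda I_{X_1})$ into $\ker(AB - \lambda I_{X_2})$. For surjectivity, take $v \in \ker(AB - \lambda I_{X_2})$ and set $u = \lambda^{-1}Bv$. Then $BAu = \lambda^{-1}B(ABv) = Bv = \lambda u$, so $u \in \ker(BA - \lambda I_{X_1})$, and $Au = \lambda^{-1}ABv = v$. The statement for $B$ follows by interchanging $A \leftrightarrow B$.

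Injectivity also holds. If $Au = 0$ with $BAu = \lambda u$, then $\lambda u = 0$, hence $u = 0$. So $A$ is a linear bijection between the two kernels, with inverse $\lambda^{-1}B$, and the geometric multiplicities coincide.

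For item $(ii)$, the same argument applies verbatim to the generalized eigenspaces $\ker((BA - \lambda I_{X_1})^k)$ and $\ker((AB - \lambda I_{X_2})^k)$ for every $k \in \bbN$, since
\[
A(BA - \lambda I_{X_1})^k = (AB - \lambda I_{X_2})^k A .
\]
Injectivity of $A$ on $\ker((BA - \lambda I_{X_1})^k)$ follows because $Au = 0$ forces
\[
(BA - \lambda I_{X_1})^k u = (-\lambda)^k u ,
\]
which gives $u = 0$. Surjectivity is the step I expect to be the main obstacle, since $\lambda^{-1}B$ is no longer an exact inverse of $A$ on generalized eigenspaces. I would therefore argue symmetrically instead. $A$ injects $\ker((BA - \lambda I_{X_1})^k)$ into $\ker((AB - \lambda I_{X_2})^k)$, and $B$ injects in the reverse direction. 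Because $\lambda_0 \in \sigma_d$, these spaces are finite dimensional for all $k$, so the two injections force equal dimensions.

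Finally, take $k$ at least the larger of the two ascents. Then these kernels equal the ranges of the Riesz projections $P(\lambda_0; BA)$ and $P(\lambda_0; AB)$, and the algebraic multiplicities agree. Alternatively, one can bypass the ascent argument: by the commutation formula, $AP(\lambda_0; BA) = P(\lambda_0; AB)A$, obtained by integrating $A(BA - \zeta I)^{-1} = (AB - \zeta I)^{-1}A$ over $C(\lambda_0; r_0)$, which gives the intertwining of ranges directly.
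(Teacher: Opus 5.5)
Your proof is correct, but for item $(ii)$ your main route differs from the paper's. The paper does not prove $(i)$ itself and defers to \cite{De78}; your argument with the explicit two-sided inverse $\lambda^{-1}B$ is the standard one.

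For $(ii)$ the paper works with the Riesz projections directly. Integrating the resolvent intertwining $B(AB-\zeta I)^{-1}=(BA-\zeta I)^{-1}B$ from \eqref{2.17} over $C(\lambda_0;r_0)$ gives $B\,\ran(P(\lambda_0;AB))\subseteq\ran(P(\lambda_0;BA))$. Injectivity comes from noting that $Bf=0$ forces $(AB-\zeta I)^{-1}f=-\zeta^{-1}f$, so $f=(2\pi i)^{-1}\oint_{C(\lambda_0;r_0)}\zeta^{-1}\,d\zeta\,f=0$, because $0\notin\ol{D(\lambda_0;r_0)}$. Interchanging $A$ and $B$ then gives equal dimensions. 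Your closing alternative is essentially this argument with $A$ and $B$ swapped. As you state it, however, it only yields the inclusion of ranges, and you would still need the $\oint\zeta^{-1}\,d\zeta=0$ step, or your kernel argument, for injectivity.

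Your main route replaces resolvents by the polynomial intertwining $A(BA-\lambda I)^k=(AB-\lambda I)^kA$, together with the identity $(BA-\lambda I)^ku=(-\lambda)^ku$ whenever $Au=0$.

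\textbf{What it costs.} You rely, without proof, on the standard fact that for a discrete eigenvalue $\ker((T-\lambda_0 I)^k)\subseteq\ran(P(\lambda_0;T))$ for every $k$, with equality for $k$ large. This follows from the Riesz decomposition: $T-\lambda_0 I$ is invertible on $\ker(P(\lambda_0;T))$ and nilpotent on the finite-dimensional $\ran(P(\lambda_0;T))$. The paper's argument instead matches the definition of algebraic multiplicity as $\dim(\ran(P(\lambda_0;T)))$ directly. Through the closure formulas in \eqref{2.17}, it also transfers more easily to unbounded $A,B$, where powers $(BA-\lambda I)^k$ raise domain issues.

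\textbf{What it buys.} You get a stronger conclusion: $\dim\ker((AB-\lambda_0 I)^k)=\dim\ker((BA-\lambda_0 I)^k)$ for every $k\in\bbN$. So $AB$ and $BA$ have the same Jordan structure at $\lambda_0$, not merely the same algebraic multiplicity.

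Incidentally, the surjectivity you worried about holds directly, with no dimension count. $AB$ is invertible on the invariant space $\ker((AB-\lambda I)^k)$, with inverse a polynomial $p(AB)$. Then $u=p(BA)Bv$ lies in $\ker((BA-\lambda I)^k)$ and satisfies $Au=p(AB)ABv=v$.
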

\begin{proof}
Since algebraic multiplicities were not addressed in \cite{De78}, we now provide a proof that algebraic multiplicities of nonzero discrete eigenvalues of $AB$ and $BA$ coincide. (The case of geometric multiplicities is already treated in item $(i)$.) Let $P(\lambda_0,AB)$ denote the Riesz projection associated with $\lambda_0 \in \sigma_d(AB)\backslash \{0\}$ and suppose that $f \in \ran(P(\lambda_0,AB))$, that is, $P(\lambda_0,AB) f = f$. Then 
\begin{equation}
P(\lambda_0,AB) f = \f{-1}{2\pi i} \ointctrclockwise_{C(\lambda_0;r_0)} (AB- \zeta I_{X_1})^{-1} f \, d\zeta = f,
\end{equation}
and hence 
\begin{align} 
\begin{split} 
B f &= \f{-1}{2\pi i} \ointctrclockwise_{C(\lambda_0;r_0)} B (AB- \zeta I_{X_1})^{-1} f \, d\zeta   \\
&= \f{-1}{2\pi i} \ointctrclockwise_{C(\lambda_0;r_0)} (BA- \zeta I_{X_1})^{-1} B f \, d\zeta,   
\end{split} 
\end{align}
since by \eqref{2.17}, $B (AB- \zeta I_{X_1})^{-1} = (BA- \zeta I_{X_2})^{-1} B$. Thus,
\begin{equation}
P(\lambda_0,BA) g = g, \quad g = B f. 
\end{equation}
Consequently, 
\begin{equation}
B \ran(P(\lambda_0,AB)) \subseteq \ran(P(\lambda_0,BA)).
\end{equation}
It remains to show injectivity of the map $B \colon \ran(P(\lambda_0,AB)) \to \ran(P(\lambda_0,BA))$. To this end suppose that $Bf = 0$, we will show this implies $f=0$.

For $\zeta \in C(\lambda_0;r_0)$ one obtains that
\begin{equation}
f = (AB - \zeta I_{X_1})^{-1} (AB -  \zeta I_{X_1}) f = - \zeta (AB - \zeta I_{X_1})^{-1} f,
\end{equation}
equivalently,
\begin{equation}
(AB - \zeta I_{X_1})^{-1} f = - \zeta^{-1} f.
\end{equation}
But then
\begin{equation}
f = \f{-1}{2\pi i} \ointctrclockwise_{C(\lambda_0;r_0)} (AB- \zeta I_{X_1})^{-1} f \, d\zeta 
= \f{1}{2\pi i} \ointctrclockwise_{C(\lambda_0;r_0)} \zeta^{-1} \, d\zeta \, f = 0,
\end{equation}
since $\zeta^{-1}$ is analytic in a sufficiently small open neighborhood of $\ol{D(\lambda_0;r_0)}$. Thus,
\begin{equation}
\dim(\ran(P(\lambda_0,AB))) \leq \dim(\ran(P(\lambda_0,BA))), 
\end{equation}
and interchanging the role of $A$ and $B$ yields 
\begin{equation}
\dim(\ran(P(\lambda_0,AB))) = \dim(\ran(P(\lambda_0,BA))), 
\end{equation}
as was to be proven.
\end{proof}

Historically, we note that commutation formulas were mentioned by S.\ Sakai \cite[p.~3]{Sa71} in the context of $C^*$-algebras.

\subsection{A $2\times2$ Block Operator Approach to Commutation Formulas: The Hilbert Space Case}\label{sec13}
In this subsection we now focus on the Hilbert space situation which permits us to go some steps beyond the Banach space setting.  

We start by making the following assumption:

\begin{hypothesis} \lb{h3.1}
Suppose $\cH_j$, $j=1,2$, are complex, separable Hilbert spaces, and assume that 
\begin{equation}
A \colon \dom(A) \to X_2, \quad \dom(A) \subseteq X_1,   \lb{3.1} 
\end{equation} 
is closed and densely defined.
\end{hypothesis}

Choosing $B =A^*$, one confirms thatHypothesis~\ref{h2.1} is clearly satisfied. This either follows from Remark~\ref{r2.3}, or, alternatively, from appealing to a celebrated result of J.\ von Neumann that asserts that $AA^*$ and $A^*A$ are nonnegative, self-adjoint operators in $\cH_2$ and $\cH_1$, respectively. 

In addition,
\begin{equation}
Q = \begin{pmatrix} 0_{\cH_1} & A^* \\ A & 0_{\cH_2} \end{pmatrix}, \quad \dom(Q) = \dom(A) \oplus \dom(A^*),
\end{equation}
is self-adjoint in $\cH = \cH_1 \oplus \cH_2$. In turn, self-adjointness of $Q$ in $\cH$ implies that of $Q^2$ (e.g., by the spectral theorem) and since 
\begin{align}
\begin{split} 
& \, Q^2 = \begin{pmatrix} A^*A & 0_{\cH_2} \\ 0_{\cH_1} & AA^* \end{pmatrix} = A^*A \oplus AA^* \geq 0_{\cH}, \\
&\dom\big(Q^2\big) = \dom(A^*A) \oplus \dom(AA^*),
\end{split}
\end{align}
this yields a very efficient alternative proof of von Neumann's result as noticed by E.\ Nelson (unpublished).

Then Theorem~\ref{t2.2} applies verbatim to the pair $(A, B=A^*)$ and Theorem~\ref{t2.4} now also extends to the case of unbounded operators (see \cite{De78} for details). Theorem~\ref{t2.4} below will present more results of this genre. 

We note that the Dirac-type operator $Q = \begin{pmatrix} 0_{\cH_1} & A^* \\ A & 0_{\cH_2} \end{pmatrix}$ in connection with $Q^2 = A^*A \oplus AA^*$, is a special case of what is known as supersymmetric quantum mechanics. Without going into any details, we will use the expression ``supersymmetry'' in this paper whenever we refer to the triple $(A^*A, AA^*, Q)$. 

Next, we slightly rephrase \cite[Theorem~3]{De78} a bit, following the surveys in \cite[App.~A]{EGNT14}, 
\cite[App.~A]{GGHT12}, and \cite{Th88}. For this purpose we need the polar decompositions of $A$ and $A^*$, that is, the representations
\begin{align}
\begin{split} 
A& = V_A |A| = |A^*| V_A = V_A A^* V_A  \, \text{ on } \, \dom(A) = \dom(|A|),     \\
A^*& = V_{A^*} |A^*| = |A|V_{A^*} = V_{A^*} A V_{A^*}   \, \text{ on } \, 
\dom(A^*) = \dom(|A^*|),     \\
|A|& = V_{A^*} A = A^* V_A = V_{A^*} |A^*| V_A \, \text{ on } \, \dom(|A|),     \\
 |A^*|& = V_A A^* = A V_{A^*} = V_A |A| V_{A^*}  \, \text{ on } \, \dom(|A^*|),   
\end{split} 
\end{align}
where
\begin{align}
& |A| = (A^* A)^{1/2}, \quad |A^*| = (AA^*)^{1/2},  \quad
V_{A^*} = (V_A)^*,      \\
& V_{A^*} V_A = P_{\ol{{\ran}(|A|)}}=P_{\ol{{\ran}(A^*)}} \, ,  \quad
V_A V_{A^*} = P_{\ol{{\ran}(|A^*|)}}=P_{\ol{{\ran}(A)}} \, .     
\end{align} 
In particular, $V_A$ is a partial isometry with initial set $\ol{{\ran}(|A|)}$
and final set $\ol{{\ran}(A)}$ and hence $V_{A^*}$ is a partial isometry with initial 
set $\ol{\ran(|A^*|)}$ and final set $\ol{\ran(A^*)}$. In addition, 
\begin{equation}
V_A = \begin{cases} \ol{A (A^*A)^{-1/2}} = \ol{(AA^*)^{-1/2} A} & \text{on }  (\ker (A))^{\bot},  \\
0 & \text{on }  \ker (A).  \end{cases}     
\end{equation} 

Next, we collect some properties relating $A^*A$ and $AA^*$.

\begin{theorem} [\cite{De78}] \lb{t3.2}  
Assume Hypothesis \ref{h3.1} and let $\phi$ be a bounded Borel measurable 
function on $\bbR$. \\ 
$(i)$ One has
\begin{align}
& \ker(A) = \ker(A^*A) = (\ran(A^*))^{\bot}, \quad \ker(A^*) = \ker(AA^*) = (\ran(A))^{\bot},   \\
& V_A (A^*A)^{n/2} = (AA^*)^{n/2} V_A, \; n\in\bbN, \quad 
V_A \phi(A^*A) = \phi(AA^*) V_A.   
\end{align}
$(ii)$ $AA^*$ and $A^*A$ are essentially isospectral, that is, 
\begin{equation}
\sigma(AA^*)\backslash\{0\} = \sigma(A^*A)\backslash\{0\},    
\end{equation}
in fact, 
\begin{equation}
AA^* [I_{\cH_2} - P_{\ker(A^*)}] \, \text{ is unitarily equivalent to } \, A^*A [I_{\cH_1} - P_{\ker(A)}].   \lb{3.179} 
\end{equation} 
In addition,
\begin{align}
\begin{split} 
& g\in \dom(AA^*)\, \text{ and } \, AA^* \, g = \mu^2 g, \; \mu \neq 0,      \\
& \quad \text{implies }  \, A^* g \in \dom(A^*A)\, \text{ and } \, A^*A(A^* g) = \mu^2 (A^* g),    \\
& f\in \dom(A^*A) \, \text{ and } \, A^*A f = \lambda^2 f, \; \lambda \neq 0,    \\
& \quad \text{implies }  \,  A f \in \dom(AA^*) \, \text{ and } \, AA^* (A f) = \lambda^2 (A f),     
\end{split} 
\end{align}
with multiplicities of eigenvalues preserved. \\
$(iii)$ One has for $z \in \rho(AA^*) \cap \rho(A^*A)$,
\begin{align}
& I_{\cH_2} + z (AA^* - z I_{\cH_2})^{-1} \supseteq A (A^*A - z I_{\cH_1})^{-1} A^*,    \lb{3.14} \\
& I_{\cH_1} + z (A^*A - z I_{\cH_1})^{-1} \supseteq A^* (AA^* - z I_{\cH_2})^{-1} A,    \lb{3.15}
\end{align}
and 
\begin{align}
& A^* \phi(AA^*) \supseteq \phi(A^*A) A^*, \quad 
A \phi(H_1) \supseteq \phi(AA^*) A,    \\
& V_{A^*} \phi(AA^*) \supseteq \phi(H_1) V_{A^*}, \quad 
V_A \phi(A^*A) \supseteq \phi(AA^*) V_A.   
\end{align}
\end{theorem}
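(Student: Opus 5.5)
The plan is to derive everything from the polar decomposition recalled above, the self-adjointness of $Q$ and $Q^2 = A^*A \oplus AA^*$, and the spectral theorem. Item $(i)$ comes first. The kernel identities are the standard ones. If $A^*Af=0$, then $0=(f,A^*Af)=\|Af\|^2$, so $\ker(A^*A)=\ker(A)$. The relation $\ker(A)=(\ran(A^*))^\bot$ holds for closed densely defined $A$ because $A^{**}=A$. The same argument with $A$ and $A^*$ interchanged gives the second chain.

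For the intertwining relations, note that $V_A$ restricted to $\ol{\ran(|A|)}$ is a unitary onto $\ol{\ran(|A^*|)}$. The polar identity $|A^*| = V_A|A|V_{A^*}$ then shows that $|A^*|$ restricted to $(\ker A^*)^\bot$ is unitarily equivalent, via $V_A$, to $|A|$ restricted to $(\ker A)^\bot$. I would check that $V_A$ maps $\dom(|A|)$ into $\dom(|A^*|)$ and that $V_A|A| = |A^*|V_A$; this follows from $A=V_A|A|=|A^*|V_A$. Iterating gives $V_A(A^*A)^{n/2} = (AA^*)^{n/2}V_A$. For bounded Borel $\phi$, I would apply the functional calculus to the unitary equivalence of the reduced parts. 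On $\ker(A)$ the operator $V_A$ vanishes, so both sides of $V_A\phi(A^*A)=\phi(AA^*)V_A$ are zero there, because $\phi(A^*A)$ preserves $\ker(A)$. This gives the operator equality.

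Item $(ii)$ follows from the same unitary equivalence. Squaring the reduced parts yields \eqref{3.179}, namely that $AA^*$ restricted to $(\ker A^*)^\bot$ is unitarily equivalent to $A^*A$ restricted to $(\ker A)^\bot$. Since each operator is the orthogonal sum of its reduced part and $0$ on the kernel, the nonzero spectra coincide. Equality of multiplicities is then immediate. For the explicit eigenvector statement, I would take $f\in\dom(A^*A)$ with $A^*Af=\lambda^2 f$. Then $Af\in\dom(A^*)$ and $A^*(Af)=\lambda^2 f\in\dom(A)$, so $Af\in\dom(AA^*)$ and $AA^*(Af)=\lambda^2 Af$. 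Injectivity of $A$ on this eigenspace follows from $\lambda\neq0$. Alternatively one can invoke Theorem~\ref{t2.2}$(iii)$ and Theorem~\ref{t2.4}, taking $B=A^*$.

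Item $(iii)$ is the unbounded version of \eqref{1.2}. For $g\in\dom(A^*)$ I would set $h=(A^*A-z)^{-1}A^*g\in\dom(A^*A)$ and verify
\begin{equation*}
(AA^*-z)Ah = A(A^*A-z)h = AA^*g,
\end{equation*}
so that $Ah=(AA^*-z)^{-1}AA^*g = g+z(AA^*-z)^{-1}g$. This proves \eqref{3.14}, and \eqref{3.15} follows by symmetry. The inclusions $A\phi(A^*A)\supseteq\phi(AA^*)A$ and $A^*\phi(AA^*)\supseteq\phi(A^*A)A^*$ come from writing $A=V_A|A|$ and using $V_A\phi(A^*A)=\phi(AA^*)V_A$ from $(i)$. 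One also uses that $\phi(A^*A)$ commutes with $|A|$ on $\dom(|A|)$, since both are functions of the same operator. The inclusions involving $V_{A^*}$ follow by taking adjoints of the $V_A$ identity.

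The main obstacle is the domain bookkeeping in the unbounded case, in particular establishing $V_A\dom(|A|)\subseteq\dom(|A^*|)$ and obtaining genuine inclusions rather than equalities in $(iii)$. I would handle this by working in the spectral representation of the reduced unitary equivalence.
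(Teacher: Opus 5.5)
Your arguments for $(i)$, $(ii)$ and for the $\phi$-intertwining relations in $(iii)$ are fine, given the polar-decomposition identities recalled before the theorem. They take a more direct route than the paper. The paper, following Nelson, reads everything off the functional calculus of the single self-adjoint operator $Q=\left(\begin{smallmatrix} 0 & A^* \\ A & 0 \end{smallmatrix}\right)$, via $Q=V_Q|Q|=|Q|V_Q$ and $Q\phi(Q^2)\supseteq\phi(Q^2)Q$. You take $A=|A^*|V_A$ as given, whereas in Nelson's formulation it drops out of $Q=V_Q|Q|=|Q|V_Q$.

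The genuine gap is in your proof of \eqref{3.14}. The operator $A(A^*A-zI_{\cH_1})^{-1}A^*$ is defined on all of $\dom(A^*)$, but your displayed computation only makes sense for $g\in\dom(AA^*)$. The expression $AA^*g$ needs $A^*g\in\dom(A)$. Likewise, $Ah\in\dom(AA^*)$ needs $A^*Ah=A^*g+zh\in\dom(A)$, and since $h\in\dom(A)$ this holds if and only if $g\in\dom(AA^*)$. For unbounded $A$ this is a proper subspace; for $A=d/dx$ with $\dom(A)=H^1(\bbR)$ one has $\dom(A^*)=H^1(\bbR)$ but $\dom(AA^*)=H^2(\bbR)$. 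So you have only shown that the two sides agree on $\dom(AA^*)$, not the asserted inclusion. The same defect affects \eqref{3.15}, obtained ``by symmetry'', where your computation needs $f\in\dom(A^*A)$ rather than $f\in\dom(A)$.

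The repair uses a relation you prove anyway. Apply $A^*\phi(AA^*)\supseteq\phi(A^*A)A^*$ with $\phi(\lambda)=(\lambda-z)^{-1}$ on $\sigma(A^*A)\cup\sigma(AA^*)$ and $\phi=0$ elsewhere; this is bounded because $z$ lies in both resolvent sets. For $g\in\dom(A^*)$ it gives $(A^*A-zI_{\cH_1})^{-1}A^*g=A^*(AA^*-zI_{\cH_2})^{-1}g$. Since $(AA^*-zI_{\cH_2})^{-1}g\in\dom(AA^*)$, you then get $A(A^*A-zI_{\cH_1})^{-1}A^*g=AA^*(AA^*-zI_{\cH_2})^{-1}g=g+z(AA^*-zI_{\cH_2})^{-1}g$, and similarly for \eqref{3.15}. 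Alternatively, extend your identity from $\dom(AA^*)$, which is a core for $A^*$, using that $A(A^*A-zI_{\cH_1})^{-1}$ is bounded. The paper's block-operator formulation handles exactly this step automatically. There $(Q^2-zI)^{-1}Q\subseteq Q(Q^2-zI)^{-1}$ is pure functional calculus of $Q$. The diagonal blocks of $Q(Q^2-zI)^{-1}Q\subseteq Q^2(Q^2-zI)^{-1}=I+z(Q^2-zI)^{-1}$ are then \eqref{3.15} and \eqref{3.14}, with the correct domains $\dom(A)$ and $\dom(A^*)$.
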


As noted by E.\ Nelson (unpublished), Theorem~\ref{t3.2} follows from the spectral theorem and the 
elementary identities, 
\begin{align}
& Q = V_Q |Q| = |Q| V_Q,     \\
& \ker(Q) = \ker(|Q|) = \ker (Q^2) = (\ran(Q))^{\bot} 
= \ker (A) \oplus \ker (A^*),     \\
& I_{\cH_1 \oplus \cH_2} + z (Q^2 - z I_{\cH_1 \oplus \cH_2})^{-1} 
= Q^2 (Q^2 -z I_{\cH_1 \oplus \cH_2})^{-1} \supseteq Q (Q^2 -z I_{\cH_1 \oplus \cH_2})^{-1} Q,  \no \\
& \hspace*{9.3cm}   z \in \rho(Q^2),      \\
& Q \phi(Q^2) \supseteq \phi(Q^2) Q,   
\end{align}
where
\begin{equation}
V_Q = \begin{pmatrix} 0 & (V_A)^* \\ V_A & 0 \end{pmatrix} 
= \begin{pmatrix} 0 & V_{A^*} \\ V_A & 0 \end{pmatrix}.   
\end{equation}

In particular,
\begin{equation}
\ker(Q) = \ker(A) \oplus \ker(A^*), \quad  
P_{\ker(Q)} = \begin{pmatrix} P_{\ker(A)} & 0 \\ 0 & P_{\ker(A^*)} \end{pmatrix},    
\end{equation}
and we also recall that
\begin{equation}
\mathfrak{S}_3 Q \mathfrak{S}_3 = - Q, \quad \mathfrak{S}_3 = \begin{pmatrix} I_{\cH_1} & 0 \\ 0 & - I_{\cH_2} 
\end{pmatrix},     
\end{equation}
that is, $Q$ and $-Q$ are unitarily equivalent. (For more details on Nelson's 
trick see also \cite{EGNT14}, \cite{GGHT12}, \cite[Sect.\ 8.4]{Te14}, \cite[Subsect.\ 5.2.3]{Th92}.) 
We also note that
\begin{equation}
\psi(|Q|) = \begin{pmatrix} \psi(|A|) & 0 \\ 0 & \psi(|A^*|) \end{pmatrix}    
\end{equation}
for Borel measurable functions $\psi$ on $\bbR$, and 
\begin{equation}
\ol{[Q |Q|^{-1}]} = \begin{pmatrix} 0 & (V_A)^*\\ V_A & 0 \end{pmatrix} = V_Q 
\, \text{ if } \, \ker(Q) = \{0\}.   
\end{equation}

Finally, we recall the following relationships between $Q$ and $A^*A$, $AA^*$.

\begin{theorem} [\cite{BGGSS87}, \cite{EGNT14}, \cite{GGHT12}, \cite{Th88}, \cite{Th92},~Ch.~5] \lb{t3.3}
Assume Hypothesis \ref{h3.1}. \\
$(i)$ Introducing the unitary operator $U$ on $(\ker(Q))^{\bot}$ by
\begin{equation}
U = 2^{-1/2} \begin{pmatrix} I_{\cH_1} & (V_A)^* \\ -V_A & I_{\cH_2} \end{pmatrix} 
\, \text{ on } \,  (\ker(Q))^{\bot},     
\end{equation}
one infers that
\begin{equation}
U Q U^{-1} = \begin{pmatrix}  |A| & 0 \\ 0 & - |A^*| \end{pmatrix} 
\, \text{ on } \,  (\ker(Q))^{\bot}.     
\end{equation}
$(ii)$ One has
\begin{align}
\begin{split}
(Q - \zeta I_{\cH_1 \oplus \cH_2})^{-1} = \begin{pmatrix} \zeta (A^*A - \zeta^2 I_{\cH_1})^{-1} 
& A^* (AA^* - \zeta^2 I_{\cH_2})^{-1}  \\  A (A^*A - \zeta^2 I_{\cH_1})^{-1}  & 
\zeta (AA^* - \zeta^2 I_{\cH_2})^{-1}  \end{pmatrix},&    \\
\zeta^2 \in \rho(A^*A) \cap \rho(AA^*).&   
\end{split}
\end{align}
$(iii)$ In addition, 
\begin{align}
\begin{split} 
& \begin{pmatrix} f_1 \\ f_2 \end{pmatrix} \in \dom(Q) \, \text{ and } \, 
Q \begin{pmatrix} f_1 \\ f_2 \end{pmatrix} = \eta \begin{pmatrix} f_1 \\ f_2 \end{pmatrix}, \; \eta \in \bbR \backslash \{0\},  
 \, \text{ implies } \\
& \,\, \quad f_1 \in \dom (A^*A), \, f_2 \in \dom (AA^*) \, \text{ and } \, A^*A f_1 = \eta^2 f_1, \, 
AA^* f_2 = \eta^2 f_2.    
\end{split} 
\end{align}
Conversely, 
\begin{align}
\begin{split} 
& g \in \dom(AA^*) \, \text{ and } AA^* \, g = \mu^2 g, \; \mu \neq 0, \\
& \quad \text{implies } \, \begin{pmatrix} \mu^{-1} A^* g \\ g \end{pmatrix} \in \dom(Q) \, \text{ and } \, 
Q \begin{pmatrix} \mu^{-1} A^* g \\ g \end{pmatrix} 
= \mu \begin{pmatrix} \mu^{-1} A^* g \\ g \end{pmatrix}.  
\end{split} 
\end{align}
Similarly,
\begin{align}
\begin{split} 
& f \in \dom(A^*A) \, \text{ and } A^*A f = \lambda^2 f, \; \lambda \neq 0, \\
& \quad \text{implies } \, \begin{pmatrix} f \\ \lambda^{-1} A f \end{pmatrix} \in \dom(Q) \, \text{ and } \, 
Q \begin{pmatrix} f \\ \lambda^{-1} A f \end{pmatrix} 
= \lambda \begin{pmatrix} f \\ \lambda^{-1} A f \end{pmatrix}.  
\end{split} 
\end{align}
\end{theorem}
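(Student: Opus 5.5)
The plan is to reduce all three items of Theorem~\ref{t3.3} to the Nelson-type identities for $Q$ recorded just before the statement, namely $Q=V_Q|Q|=|Q|V_Q$, $\ker(Q)=\ker(A)\oplus\ker(A^*)$, $|Q|=|A|\oplus|A^*|$ (the case $\psi(x)=x$ of the displayed formula for $\psi(|Q|)$), and $Q^2=A^*A\oplus AA^*$. I would take them in the order $(ii)$, $(iii)$, $(i)$, since $(ii)$ and $(iii)$ are essentially algebraic consequences of Theorem~\ref{t2.2} and Theorem~\ref{t3.2}, while $(i)$ requires some care with partial isometries.

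For $(ii)$, I would apply Theorem~\ref{t2.2}\,$(vii)$ with $B=A^*$; Hypothesis~\ref{h2.1} holds by the discussion following Hypothesis~\ref{h3.1}. For $\zeta\neq0$ this gives the displayed matrix directly. To check it independently, multiply the candidate matrix by $Q-\zeta I$ on the left and use $A(A^*A-\zeta^2)^{-1}A^*\subseteq I+\zeta^2(AA^*-\zeta^2)^{-1}$, which is \eqref{3.14}, together with the intertwining relation $A(A^*A-\zeta^2)^{-1}=\overline{(AA^*-\zeta^2)^{-1}A}$. One also has to verify that the off-diagonal entries map into $\dom(A)\oplus\dom(A^*)$; this holds because $(A^*A-\zeta^2)^{-1}$ maps into $\dom(A^*A)\subseteq\dom(A)$. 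The case $\zeta=0$ with $0\in\rho(A^*A)\cap\rho(AA^*)$ is then immediate, since the diagonal entries vanish and $Q$ is invertible with inverse $\begin{pmatrix}0&A^{-1}\\(A^*)^{-1}&0\end{pmatrix}$.

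For $(iii)$, write $Q(f_1,f_2)^\top=\eta(f_1,f_2)^\top$ as $A^*f_2=\eta f_1$ and $Af_1=\eta f_2$. Then $f_1\in\dom(A)$ and $Af_1=\eta f_2\in\dom(A^*)$, so $f_1\in\dom(A^*A)$ and $A^*Af_1=\eta A^*f_2=\eta^2 f_1$; the argument for $f_2$ is symmetric. For the converse, take $g\in\dom(AA^*)$, so that $A^*g\in\dom(A)$. Then $Q(\mu^{-1}A^*g,g)^\top=(A^*g,\mu^{-1}AA^*g)^\top=(A^*g,\mu g)^\top$, which is the claimed eigenvector relation. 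The remaining statement, for $f\in\dom(A^*A)$, is handled in the same way.

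For $(i)$, I would first show that $U$ is unitary on $(\ker Q)^\perp=\overline{\ran(A^*)}\oplus\overline{\ran(A)}$. On this subspace $V_A^*V_A=P_{\overline{\ran(A^*)}}$ and $V_AV_A^*=P_{\overline{\ran(A)}}$ both act as the identity, so $U^*U=UU^*=I$ there, while the cross terms $\pm V_A^*\mp V_A^*$ cancel. Next I would compute $UQ$ and $UQU^*$ on $\dom(Q)\cap(\ker Q)^\perp$ using $A=V_A|A|=|A^*|V_A$ and $A^*=V_A^*|A^*|=|A|V_A^*$. The $(1,1)$ entry comes out as $\tfrac12(V_A^*A+A^*V_A)$, which equals $|A|$ on $(\ker A)^\perp$. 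The off-diagonal entries reduce to $\tfrac12(-|A|V_A^*+V_A^*|A^*|)$, which vanishes by Theorem~\ref{t3.2}\,$(i)$ in the form $V_A^*\phi(AA^*)=\phi(A^*A)V_A^*$. The $(2,2)$ entry is $-|A^*|$.

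I expect the main obstacle to be the domain bookkeeping in $(i)$. The identity has to be established as an operator equality, including $U\dom(Q)=\dom(|A|)\oplus\dom(|A^*|)$ on $(\ker Q)^\perp$, and not just on a core. I would handle this by using $V_A\dom(|A|)=\dom(|A^*|)\cap\overline{\ran(A)}$, which follows from $V_A|A|V_A^*=|A^*|$, and then passing to closures.
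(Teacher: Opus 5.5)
The paper gives no proof of Theorem~\ref{t3.3}; it only cites the literature, so there is no argument of the authors' to compare with. Your direct verification is correct.
\begin{itemize}
\item Item (ii) is Theorem~\ref{t2.2}\,(vii) with $B=A^*$, together with the trivial case $\zeta=0$.
\item Item (iii) is the two-line algebra from $A^*f_2=\eta f_1$ and $Af_1=\eta f_2$.
\item In (i), the entrywise computation with $V_A^*A=A^*V_A=|A|$, $AV_A^*=V_AA^*=|A^*|$, $A=|A^*|V_A$ and $A^*=|A|V_A^*$ does produce $|A|\oplus(-|A^*|)$.
\item $U$ is reduced by $(\ker Q)^\perp=\overline{\ran(A^*)}\oplus\overline{\ran(A)}$, because $V_A$ and $V_A^*$ map into $\overline{\ran(A)}$ and $\overline{\ran(A^*)}$ respectively. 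It is unitary there.
\end{itemize}

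\textbf{Part (i), and the unused opening.} Your opening promises a reduction to $Q=V_Q|Q|$ and $|Q|=|A|\oplus|A^*|$, but you never use these identities. That route is actually the cleanest one for (i).
\begin{itemize}
\item On $(\ker Q)^\perp$ one checks that $UV_QU^{-1}=I\oplus(-I)$.
\item $U$ commutes with $(|Q|+I)^{-1}$, by the bounded intertwining $V_A\phi(A^*A)=\phi(AA^*)V_A$ of Theorem~\ref{t3.2}\,(i). This forces $U\dom(|Q|)\subseteq\dom(|Q|)$ and gives $U|Q|U^{-1}=|Q|$ including domains.
\item Hence $UQU^{-1}=(I\oplus(-I))|Q|=|A|\oplus(-|A^*|)$.
\end{itemize}
Alternatively, both $UQU^{-1}$ and $|A|\oplus(-|A^*|)$ are self-adjoint on $(\ker Q)^\perp$, so one inclusion suffices. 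Either way, the ``domain bookkeeping'' you flag as the main obstacle is routine, and no closure argument is needed.

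\textbf{Part (ii), the independent check.} You have the two products interchanged.
\begin{itemize}
\item $(Q-\zeta I)R=I$ follows entrywise with no commutation formula. One only needs $R$ to map into $\dom(A)\oplus\dom(A^*)$, which comes from $\ran\big(A^*(AA^*-\zeta^2I)^{-1}\big)\subseteq\dom(A)$ and $\ran\big(A(A^*A-\zeta^2I)^{-1}\big)\subseteq\dom(A^*)$.
\item It is $R(Q-\zeta I)\subseteq I$ that requires \eqref{3.14}, \eqref{3.15} and the intertwining relations.
\end{itemize}
Either product, together with $\zeta\in\rho(Q)$ from Theorem~\ref{t2.2}\,(ii), suffices.
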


A thorough discussion of the commutation formulas \eqref{3.14}, \eqref{3.15} and of related formulas together with applications in the Hilbert space context can be found in \cite{De78}. For a great variety of additional results in this area, see, for instance, \cite{BH22}, \cite{BGGSS87}\, \cite{DW14}, \cite{EGNT14}, \cite{EF85}, \cite{Ge91}, \cite{Ge92}, \cite{GGHT12}, \cite{GSS91}, \cite{GS90}, \cite{HW22}, \cite{KST12}, \cite{OS03}, \cite{PS14}, \cite{Sc78}, \cite{Sc03}, \cite[Sect.\ 8.4]{Te14}, \cite{Th88}, \cite[Ch.~5]{Th92}, \cite[p.~106]{We80}.

\subsection{Applications to One-Dimensional Scattering Theory}\label{sec14}
In this subsection we consider scattering theory for one-dimensional Schr\"odinger and Dirac-type operators in a supersymmetric context. In this case $A$ becomes an $L^2(\bbR)$-realization of the differential expression $A= (d/dx) + \phi(x)$, $x \in \bbR$, for an appropriate real-valued function $\phi(\dott)$ on $\bbR$ tending to asymptotic values $\phi_{\pm} \in \bbR$ as $x \to \pm \infty$. We follow the sources \cite{BGGSS87}, \cite{Ge86}, \cite{GSS91}, 

We start with scattering theory and hence introduce the following convenient set of assumptions.

\begin{hypothesis} \lb{h4.1}
Suppose that $\phi\colon \bbR \to \bbR$, $\phi_{\pm} \in \bbR$ satisfy
\begin{align}
\begin{split} 
& \phi, \phi' \in L^{\infty}(\bbR),     \\
& \lim_{x \to \pm \infty} \phi(x) = \phi_{\pm},     \lb{3.200} \\
& \pm \int_0^{\pm \infty} (1 +x^2) |\phi(x) - \phi_{\pm}| \, dx < \infty, \quad \int_{\bbR} (1+x^2) |\phi'(x)| \, dx < \infty.
\end{split} 
\end{align}
\end{hypothesis}

In particular, Hypothesis~\ref{h4.1} implies $\phi \in AC_{loc}(\bbR)$. For the remainder of Subsection~\ref{sec14} we will, without loss of generality, make the choice 
\begin{equation} 
\phi_-^2 \leq \phi_+^2 
\end{equation} 
to avoid further case distinctions, and note that the case $\phi_+^2 \leq \phi_-^2$ is entirely analogous. 

Consider\footnote{For simplicity of notation, we will use the same symbol for differential expressions (like $A = \f{d}{dx} + \phi(x)$, $x \in \bbR$, and operators (like $A$ in $L^2(\bbR)$) throughout this paper.} in $L^2(\bbR)$,
\begin{equation}
A = \f{d}{dx} + \phi, \quad \dom(A) = H^1(\bbR),    \lb{4.2}
\end{equation}
such that
\begin{equation}
A^* = - \f{d}{dx} + \phi, \quad \dom(A^*) = H^1(\bbR).     \lb{4.3}
\end{equation}
The associated Schr\"odinger operators $H_j \geq 0$, $j=1,2$, in $L^2(\bbR)$ are then given by
\begin{align}
& H_1 = A^*A = - \f{d^2}{dx^2} + V_1 \geq 0, \quad \dom(H_1) = H^2(\bbR),     \lb{4.4} \\
& H_2 = AA^* = - \f{d^2}{dx^2} + V_2 \geq 0, \quad \dom(H_2) = H^2(\bbR),     \lb{4.5} \\
& V_j(x) = \phi(x)^2 + (-1)^j \phi'(x), \quad x \in \bbR, \; j=1,2.    \lb{4.6}
\end{align}
The maps 
\begin{equation} 
\phi \mapsto V_j = \phi^2 + (-1)^j \phi', \quad j=1,2,     \lb{4.6a} 
\end{equation}
are known under the name of Miura transforms. 

The supersymmetric Dirac operator $Q$ in $L^2(\bbR) \oplus L^2(\bbR)$ is then of the form
\begin{equation}
Q = \begin{pmatrix} 0 & A^* \\ A & 0 \end{pmatrix} = \begin{pmatrix} 0 & - (d/dx) + \phi \\ (d/dx) + \phi & 0 \end{pmatrix}, 
\quad \dom(Q) = H^2(\bbR) \oplus H^2(\bbR),    \lb{4.7}
\end{equation}
and 
\begin{equation}
Q^2 = \begin{pmatrix} A^*A & 0 \\ 0 & AA^* \end{pmatrix} = H_1 \oplus H_2 \geq 0.
\end{equation}

Next, we introduce the scattering (Jost) solutions for $H_j$, $j=1,2$, in terms of (modified) Volterra integral equations defined by 
\begin{align}
& f_{j,\pm} (k_{\pm},x) = e^{\pm i k_{\pm} x} - \int_x^{\pm \infty} k_{\pm}^{-1} \sin(k_{\pm}(x-x') \big[V_j(x') - \phi_{\pm}^2\big] 
f_{j,\pm}(k_{\pm}, x') \, dx',    \no \\
& \hspace*{2.5cm} k_{\pm} = (z-\phi_{\pm}^2)^{1/2}, \; \Im(k_{\pm}) \geq 0, \; z \in \bbC, \; x \in \bbR, \; j=1,2,
\end{align}
and note that in the distributional sense,
\begin{equation}
H_j f_{j,\pm} (k_{\pm},\dott) = z f_{j,\pm} (k_{\pm},\dott), \quad z \in \bbC, \; j=1,2. 
\end{equation}
We also note that
\begin{equation}
W(f_{j,\pm} (-k_{\pm},\dott) ,f_{j,\pm} (k_{\pm},\dott) ) = \pm 2i k_{\pm} \neq 0, \quad z \in \bbC \backslash \{\phi_{\pm}^2\}, \; 
j=1,2,
\end{equation}
where $W(f,g)(x) = f(x) g'(x) - f'(x) g(x)$ denotes the Wronskian of $f$ and $g$. 

Regarding the spectra of $H_j$, $j=1,2$, one then has the following result:

\begin{theorem} [\cite{GSS91}] \lb{t4.2}  
Assume Hypothesis~\ref{h4.1}. Then 
\begin{equation}
\sigma_{ess}(H_j) = \sigma_{ac}(H_j) = [\phi_-^2,\infty), \quad \sigma_{sc}(H_j) = \emptyset, \quad  j=1,2.
\end{equation}
Moreover, $H_j$, $j=1,2$, have simple spectrum in the interval $(\phi_-^2,\phi_+^2)$ $($assuming $\phi_-^2 < \phi_+^2$$)$ and spectral multiplicity equal to two on the interval $(\phi_+^2,\infty)$. In addition, $H_j$, $j=1,2$, have finitely many simple eigenvalues $\lambda_{j,\ell} = \phi_{\pm}^2 - \kappa_{j,\pm,\ell}^2$ in the interval $[0,\phi_-^2)$ determined by
\begin{equation}
W(f_{j,-}(i\kappa_{j,-,\ell},\dott), f_{j,+}(i\kappa_{j,+,\ell},\dott)) = 0, \quad 1 \leq \ell \leq N_j, \;  j =1, 2.
\end{equation}
If $\phi_-^2 > 0$, the $($necessarily simple\,$)$ eigenvalues of $H_1$ and $H_2$ coincide in the interval $(0,\phi_-^2)$.

There are no eigenvalues embedded into the essential spectrum of $H_j$, $j=1,2$, and there are no threshold eigenvalues, that is,
\begin{equation}
\sigma_p(H_j) \cap [\phi_-^2,\infty) = \emptyset, \quad j=1,2. 
\end{equation}
\end{theorem}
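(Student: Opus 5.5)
The plan is to treat $H_j$, $j=1,2$, as short-range perturbations of $-d^2/dx^2$ with \emph{different} asymptotic constants at $\pm\infty$, namely $V_j(x) \to \phi_\pm^2$ as $x \to \pm\infty$. Hypothesis~\ref{h4.1} gives $\pm\int_0^{\pm\infty}(1+x^2)|V_j(x)-\phi_\pm^2|\,dx<\infty$, since
\[
|V_j-\phi_\pm^2| \leq |\phi-\phi_\pm|\,|\phi+\phi_\pm| + |\phi'|
\]
and $\phi\in L^\infty(\bbR)$. This moment condition guarantees the Volterra equations for $f_{j,\pm}(k_\pm,\dott)$ are uniquely solvable. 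The Jost solutions are then analytic in $\Im(k_\pm)>0$, continuous up to $\Im(k_\pm)=0$, including at $k_\pm=0$ thanks to the weight $(1+x^2)$.

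\emph{Essential spectrum.} I will use decomposition: insert a Dirichlet condition at $x=0$. This changes the resolvent by a rank-one operator, so $\sigma_{ess}$ is unchanged. The half-line operators on $(-\infty,0)$ and $(0,\infty)$ are $\cB_1$-perturbations, in resolvent sense, of $-d^2/dx^2+\phi_\pm^2$. Weyl's theorem gives $\sigma_{ess}(H_j)=[\phi_-^2,\infty)\cup[\phi_+^2,\infty)=[\phi_-^2,\infty)$. The Birman--Kato/Kuroda trace-class theory then gives existence and completeness of the wave operators relative to the decoupled comparison operator. Hence $\sigma_{ac}(H_j)$ equals the union of the half-line ac spectra: multiplicity one on $(\phi_-^2,\phi_+^2)$ and multiplicity two on $(\phi_+^2,\infty)$.

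\emph{Absence of singular continuous and embedded spectrum.} For $z$ off the spectrum I will write the Green's function as
\[
G_j(z,x,x') = \frac{f_{j,-}(k_-,x_<)\,f_{j,+}(k_+,x_>)}{W\big(f_{j,-}(k_-,\dott),f_{j,+}(k_+,\dott)\big)} .
\]
I will show that the Wronskian $W_j(z)$ has no zeros for $z\in(\phi_-^2,\infty)$ and extends continuously to the real axis. The key point is a nonvanishing argument. For $z>\phi_+^2$, both $f_{j,\pm}$ and their complex conjugates are solutions, and the conjugate Wronskian identity $W(f_{j,\pm}(-k_\pm),f_{j,\pm}(k_\pm))=\pm2ik_\pm$ forces $|W_j|$ to be bounded below; this is the usual transmission-coefficient argument. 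For $\phi_-^2<z<\phi_+^2$, $f_{j,+}$ is real up to a constant and decays, while $f_{j,-}$ oscillates. If $W_j=0$, then $f_{j,+}$ would be a multiple of $f_{j,-}$, whose imaginary part is nontrivial; comparing with $\ol{f_{j,-}}$ via the Wronskian $-2ik_-\neq0$ gives a contradiction. A limiting absorption principle then yields $\sigma_{sc}=\emptyset$ and no embedded eigenvalues. At the thresholds $\phi_\pm^2$, the $(1+x^2)$ moments imply that zero-energy solutions are bounded or linear, so they are not $L^2$; hence there are no threshold eigenvalues.

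\emph{Discrete spectrum.} Below $\phi_-^2$, eigenvalues are exactly the zeros of $W_j$ with $k_\pm=i\kappa_\pm$. They are simple, since in one dimension the $L^2$ eigenspace at a given $z$ is at most one-dimensional, because $f_{j,+}$ spans the decaying solutions. Finiteness follows from a Bargmann-type bound, or from analyticity of $W_j$ together with its continuity and nonvanishing at the threshold, so that zeros cannot accumulate at $\phi_-^2$. Since $H_j\ge0$, the eigenvalues lie in $[0,\phi_-^2)$. Finally, Theorem~\ref{t3.2}\,(ii), applied with $A$ as in \eqref{4.2}, shows that $H_1=A^*A$ and $H_2=AA^*$ have the same nonzero eigenvalues with the same multiplicities. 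So their eigenvalues in $(0,\phi_-^2)$ coincide.

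I expect the main obstacle to be the uniform control of $W_j$ near the thresholds $\phi_\pm^2$. There, $k_+$ or $k_-$ vanishes and the lower bound from the Wronskian identity degenerates, and this control is needed both for absence of threshold eigenvalues and for finiteness of the discrete spectrum. I would handle it using the second-moment condition to obtain differentiability of $f_{j,\pm}$ in $k_\pm$ at $0$, as in the classical Deift--Trubowitz analysis.
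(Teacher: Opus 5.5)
The paper does not prove this theorem; it is quoted from \cite{GSS91}. The framework set up around the statement (Volterra equations for $f_{j,\pm}$, Wronskians, eigenvalues as Wronskian zeros) is the direct Jost-function route. There the Green's function built from $f_{j,-}$ and $f_{j,+}$ gives everything, including the multiplicities: the spectral density has rank one on $(\phi_-^2,\phi_+^2)$, where $f_{j,+}$ is real, and rank two on $(\phi_+^2,\infty)$. Theorem~\ref{t3.2}\,(ii) then gives the coincidence of eigenvalues.

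You follow this route for the singular and discrete spectrum, and both of your nonvanishing arguments for $W_j$ are correct. For $z>\phi_+^2$ the argument amounts to the flux identity $|\alpha|^2-|\beta|^2=k_-/k_+$. For $\phi_-^2<z<\phi_+^2$ you play the reality of $f_{j,+}(i\kappa_+,\dott)$ against $W(\overline{f_{j,-}},f_{j,-})=-2ik_-$. Where you genuinely differ is in obtaining $\sigma_{ess}$, $\sigma_{ac}$ and the multiplicities from Dirichlet decoupling at $x=0$ plus Birman--Kuroda trace-class scattering. That is valid and yields the multiplicity count without computing any spectral matrix. The cost is heavier abstract machinery, partly redundant once your limiting absorption principle has shown the spectrum in $(\phi_-^2,\infty)$ to be purely absolutely continuous.

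One point needs fixing: your second finiteness argument is wrong as stated, because $W_j$ need not be nonzero at the threshold $\phi_-^2$. It vanishes whenever $H_j$ has a threshold resonance. For instance, if $\phi_-=0<\phi_+$, then $\psi_1(x)=\exp\big(-\int_0^x\phi(x')\,dx'\big)$ tends to a nonzero constant at $-\infty$ and is a multiple of $f_{1,+}(i\phi_+,\dott)$, so $W_1(0)=0$ (cf.\ Remark~\ref{r.3.29}\,(ii)). Use the Bargmann route instead, which your decoupling makes concrete:
\begin{itemize}
\item The form domain $\{u\in H^1(\bbR)\,|\,u(0)=0\}$ of the decoupled operator has codimension one in $H^1(\bbR)$. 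By min--max, $H_j$ therefore has at most one more eigenvalue below $\phi_-^2$ than the decoupled operator.
\item Each half-line piece has finitely many eigenvalues below its own threshold $\phi_\pm^2\geq\phi_-^2$ by Bargmann's bound, which needs only first moments of $V_j-\phi_\pm^2$.
\end{itemize}

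Relatedly, the threshold control you single out as the main obstacle is not needed for this theorem.
\begin{itemize}
\item Absence of eigenvalues at $\phi_-^2$ and $\phi_+^2$ is plain ODE asymptotics. At $z=\phi_-^2$, nontrivial solutions are asymptotic to $a+bx$ with $(a,b)\neq(0,0)$ at $-\infty$; this, not mere boundedness, is what excludes $L^2$. At $z=\phi_+^2>\phi_-^2$ they oscillate at $-\infty$.
\item Single points carry no singular continuous measure.
\item Your reality argument with $k_+=0$ even shows $W_j(\phi_+^2)\neq 0$.
\end{itemize}
So no differentiability of the Jost solutions in $k_\pm$ at $0$ is required.
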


At this point we turn to the scattering matrix $S_j(\dott)$ associated with $H_j$, $j=1,2$:

\begin{theorem} [\cite{GSS91}] \lb{t4.3}  
Assume Hypothesis~\ref{h4.1}. \\
$(i)$ If $\lambda \in (\phi_-^2, \phi_+^2)$, then the unimodular scattering function $S_j(\dott)$, $j=1,2$, is of the form,
\begin{align}
\begin{split} 
& S_j(\lambda) = - \f{\ol{W(f_{j,-}(k_-,\dott), f_{j,+}(k_+,\dott))}}{W(f_{j,-}(k_-,\dott), f_{j,+}(k_+,\dott))},    \\
& k_{\pm} = (\lambda - \phi_{\pm}^2)^{1/2} > 0, \; \lambda \in (\phi_-^2,\phi_+^2), \; j = 1,2.
\end{split}
\end{align}
$(ii)$ If $\lambda \in (\phi_+^2,\infty)$, then the unitary scattering matrix $S_j(\dott)$, $j=1,2$,  in $\bbC^2$ is given by 
\begin{equation} 
S_j(\lambda) = \begin{pmatrix} T_j(\lambda) & R_j^r(\lambda) \\ R_j^{\ell}(\lambda) & T_j(\lambda) \end{pmatrix}, 
\quad j=1,2,
\end{equation}
where the transmission and reflection coefficients from left and right incidence are of the form 
\begin{align}
\begin{split} 
& T_j(\lambda) = \f{2i (k_-k_+)^{1/2}}{W(f_{j,-}(k_-,\dott), f_{j,+}(k_+,\dott))},    \\
& R_j^{\ell}(\lambda) = - \f{W(f_{j,-}(-k_-,\dott), f_{j,+}(k_+,\dott))}{W(f_{j,-}(k_-,\dott), f_{j,+}(k_+,\dott))},    \\
& R_j^{\ell}(\lambda) = - \f{W(f_{j,-}(k_-,\dott), f_{j,+}(-k_+,\dott))}{W(f_{j,-}(k_-,\dott), f_{j,+}(k_+,\dott))};   \\
& k_{\pm} = (\lambda - \phi_{\pm}^2)^{1/2} > 0, \; \lambda \in (\phi_+^2,\infty), \; j = 1,2.
\end{split}
\end{align}
\end{theorem}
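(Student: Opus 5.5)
The plan is to derive the formulas for $S_j(\lambda)$ from stationary scattering theory in the two-channel setting, with $H_j$ treated as a short-range perturbation of the asymptotically piecewise-constant reference operators $-d^2/dx^2+\phi_\pm^2$ on the half-lines. First, for $\lambda>\phi_-^2$ we take $k_\pm=(\lambda-\phi_\pm^2)^{1/2}$. Hypothesis~\ref{h4.1} gives $V_j-\phi_\pm^2 = \phi^2-\phi_\pm^2 \pm\phi' \in L^1((1+|x|)dx)$ near $\pm\infty$. Hence the Volterra equations for $f_{j,\pm}(k_\pm,\dott)$ are uniquely solvable, continuous up to the real axis in $k_\pm$, and satisfy $f_{j,\pm}(k_\pm,x)=e^{\pm ik_\pm x}(1+o(1))$ as $x\to\pm\infty$. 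This uses the standard iteration bound $|f-e^{\pm ik x}|\le C\int (1+|x'|)|V_j-\phi_\pm^2|$.

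Next, we construct the physical (Lippmann--Schwinger) scattering solutions and read off the coefficients. For $\lambda>\phi_+^2$ both channels are open, and $f_{j,\pm}(\pm k_\pm,\dott)$ give two fundamental systems near $\pm\infty$. The left-incident solution is defined by
\begin{equation*}
\psi_j^\ell = T_j(\lambda)\,(k_-/k_+)^{1/2} f_{j,+}(k_+,\dott)
\end{equation*}
for $x\to+\infty$, and equals $(k_-)^{-1/2}$-normalized $f_{j,-}(-k_-,\dott)+R^\ell_j f_{j,-}(k_-,\dott)$ near $-\infty$; the right-incident solution $\psi_j^r$ is defined analogously. Writing $f_{j,+}(k_+,\dott)=\alpha f_{j,-}(k_-,\dott)+\beta f_{j,-}(-k_-,\dott)$, we take Wronskians with $f_{j,-}(\mp k_-,\dott)$ and use $W(f_{j,-}(-k_-),f_{j,-}(k_-))=-2ik_-$. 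This yields $\alpha,\beta$ as Wronskian quotients, and inverting gives the stated $T_j$ and $R_j^{\ell}$. The $R^r_j$ formula follows symmetrically from the expansion of $f_{j,-}$ in the basis at $+\infty$. The flux normalization $(k_\pm)^{1/2}$ is chosen so that constancy of the Wronskian, $W(\psi,\ol\psi)=\mathrm{const}$, gives $|T_j|^2+|R_j|^2=1$ and $T_j\ol{R^r_j}+R^\ell_j\ol{T_j}=0$, i.e.\ unitarity of $S_j(\lambda)$. Equality of the two transmission coefficients follows from the same Wronskian appearing in both channels.

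For $\lambda\in(\phi_-^2,\phi_+^2)$, $k_+$ becomes imaginary, $f_{j,+}$ decays at $+\infty$, and only one channel is open. The unique (up to scaling) $L^2$-at-$+\infty$ solution $f_{j,+}$ is real, because $V_j$ is real and $k_+\in i(0,\infty)$. Expanding it near $-\infty$ as $a\,f_{j,-}(-k_-)+\ol a\, f_{j,-}(k_-)$, the coefficient ratio is the unimodular quantity $S_j=-\ol W/W$, where $W = W(f_{j,-}(k_-),f_{j,+})$ and $\ol W$ is obtained by replacing $f_{j,-}(k_-)$ with $\ol{f_{j,-}(k_-)}=f_{j,-}(-k_-)$. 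Nonvanishing of $W$ on these intervals follows from Theorem~\ref{t4.2}: a zero would produce an embedded or threshold eigenvalue (for $\lambda<\phi_+^2$) or a real solution bounded on both sides forcing $W(\psi,\ol\psi)=0$, which contradicts $\pm2ik_\pm\ne0$.

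The main obstacle is identifying these stationary objects with the time-dependent scattering matrix, i.e.\ existence and completeness of wave operators relative to the two-channel asymptotic dynamics. Our plan is to use the limiting absorption principle: the resolvent kernel $f_{j,-}(x_<)f_{j,+}(x_>)/W$ has boundary values on $(\phi_-^2,\infty)\setminus\{\phi_+^2\}$, and we then follow the standard stationary representation of $S$ via the spectral measure. We anticipate that the threshold point $\phi_+^2$ needs separate care, but it is measure zero and excluded from the statement.
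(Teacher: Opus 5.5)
The paper does not prove this theorem; it only cites \cite{GSS91}. Your route is the standard stationary derivation behind that reference: Jost solutions from the Volterra equations, Wronskian algebra for the physical solutions, unitarity from constancy of $W(\psi,\overline{\psi})$, and an appeal to steplike scattering theory to identify the result with the time-dependent $S$-matrix. Your computations check out. With $f_{j,-}(-k_-,\dott)\sim e^{ik_-x}$ as the incoming wave at $-\infty$, they reproduce $T_j$, $R_j^\ell$ and $-\overline{W}/W$ exactly. They also show two typos in the statement: in $(i)$ one has $k_+\in i(0,\infty)$, not $k_+>0$, and the third formula in $(ii)$ is $R_j^r$.

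Two points need tightening.

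\textbf{Justification of $W\neq 0$.} On $(\phi_-^2,\phi_+^2)$, a zero of $W$ would make $f_{j,+}$ proportional to $f_{j,-}(k_-,\dott)$. That function is not $L^2$ near $-\infty$, so no eigenvalue arises and Theorem~\ref{t4.2} is irrelevant there. What actually suffices is the reality of $f_{j,+}$: its coefficients in the basis at $-\infty$ are $a$ and $\overline{a}$, and they cannot both vanish. For $\lambda>\phi_+^2$, suppose $f_{j,+}(k_+,\dott)=c\,f_{j,-}(k_-,\dott)$. Evaluating $W(\psi,\overline{\psi})$ at both ends gives $-2ik_+=2ik_-|c|^2$, which is impossible. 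So the contradiction is not ``$W(\psi,\overline\psi)=0$''. Minor: in $V_j-\phi_\pm^2$ the derivative term is $(-1)^j\phi'$, not $\pm\phi'$.

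\textbf{Identification with the time-dependent $S$-matrix.} This step is only announced as a plan. If $S_j$ is understood stationarily, as in the survey, nothing more is needed. If the time-dependent $S$-matrix is intended, you need the two-space wave-operator theory for different left/right asymptotics (Davies--Simon type). Cite or carry it out rather than leave it as a limiting-absorption sketch.
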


Explicitly, for $\lambda \in (\phi_+^2,\infty)$, unitarity of $S_j(\lambda)$, $j=1,2$, implies
\begin{align} 
\begin{split}
& |T_j(\lambda)|^2 + |R_j^{\ell}(\lambda)|^2 = 1 = |T_j(\lambda)|^2 + |R_j^r(\lambda)|^2,    \\
& |R_j^{\ell}(\lambda)| = |R_j^r(\lambda)|; \quad \lambda \in (\phi_+^2,\infty), \; j=1,2.
\end{split}
\end{align}

Thus far we treated $S_j$ separately for $j=1,2$. However, since $H_1=A^*A$ and $H_2=AA^*$, $S_1$ and $S_2$ are intimately connected as will be shown next. For this purpose we recall that 
\begin{equation}
\begin{cases}
f_{1,\pm} (k_{\pm},x),   \\
f_{2,\pm} (k_{\pm},x) = (\pm i k_{\pm} + \phi_{\pm})^{-1} (Af_{1,\pm})(k_{\pm},x); 
\end{cases} \quad x \in \bbR,
\end{equation}
are correctly normalized Jost solutions of $H_1$ and $H_2$. Thus, using the following elementary Wronskian identity,
\begin{equation}
W(A f(z,\dott), A g(z,\dott)) = z \, W(f(z,\dott), g(z,\dott)), \quad z \in \bbC,
\end{equation}
where $f(z,\dott)$ and $g(z,\dott)$ are any distributional solutions of 
\begin{equation}
(H_1 u(z,\dott))(x) = (A^*A u(z,\dott))(x) = z u(z,x), \quad (z,x) \in \bbC \times \bbR,  
\end{equation}
one obtains the following result:
 
\begin{theorem} [\cite{GSS91}] \lb{t4.4}  
Assume Hypothesis~\ref{h4.1}. \\
$(i)$ If $\lambda \in (\phi_-^2, \phi_+^2)$, then 
\begin{equation}
S_1(\lambda) = (ik_- + \phi_-)(- i k_- + \phi_-)^{-1} S_2(\lambda), \quad \lambda \in (\phi_-^2,\phi_+^2).   \lb{4.23}
\end{equation}
$(ii)$ If $\lambda \in (\phi_+^2,\infty)$, then 
\begin{align}
\begin{split} 
& T_1(\lambda) = (ik_- + \phi_-)(i k_+ + \phi_+)^{-1} T_2(\lambda),     \\
& R_1^{\ell}(\lambda) = (ik_- + \phi_-)(-i k_- + \phi_-)^{-1} R_2^{\ell}(\lambda),     \\
& R_1^r(\lambda) = (- ik_+ + \phi_+)(i k_+ + \phi_+)^{-1} R_2^r(\lambda); \quad \lambda \in (\phi_+^2,\infty).   \lb{4.24} 
\end{split}
\end{align}
\end{theorem}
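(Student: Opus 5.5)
We compare the Wronskians in the denominators and numerators of the formulas in Theorem~\ref{t4.3} for $j=1$ and $j=2$. Since $f_{2,\pm}(k_\pm,\dott)=(\pm ik_\pm+\phi_\pm)^{-1}(Af_{1,\pm})(k_\pm,\dott)$ and the $f_{1,\pm}$ are distributional solutions of $H_1u=zu$, the elementary identity $W(Af,Ag)=z\,W(f,g)$ applies to all pairs we need.

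\textbf{Step 1: the key Wronskian relation.} With $z=\lambda$ and signs $\epsilon_\pm\in\{\pm1\}$, the identity gives
\begin{equation*}
W\big(f_{2,-}(\epsilon_- k_-,\dott),f_{2,+}(\epsilon_+ k_+,\dott)\big)
=\f{\lambda\,W\big(f_{1,-}(\epsilon_- k_-,\dott),f_{1,+}(\epsilon_+ k_+,\dott)\big)}{(-i\epsilon_-k_-+\phi_-)(i\epsilon_+k_++\phi_+)} .
\end{equation*}
Here $f_{j,\pm}(-k_\pm,\dott)$ denotes the corresponding Jost solution with $k_\pm$ replaced by $-k_\pm$. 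On the real half-lines this is the complex conjugate solution, and its normalization factor is $(\mp ik_\pm+\phi_\pm)^{-1}$ with the sign of $k$ flipped. I will also use $\lambda=k_\pm^2+\phi_\pm^2=(ik_\pm+\phi_\pm)(-ik_\pm+\phi_\pm)$, so $\lambda$ factors in two ways.

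\textbf{Step 2: item (ii), $\lambda\in(\phi_+^2,\infty)$.} Take $\epsilon_-=\epsilon_+=1$ in the denominator. Then
\[
W_2=\lambda W_1/[(-ik_-+\phi_-)(ik_++\phi_+)].
\]
Using $\lambda=(ik_-+\phi_-)(-ik_-+\phi_-)$ this becomes
\[
W_2=(ik_-+\phi_-)(ik_++\phi_+)^{-1}W_1,
\]
which gives the $T$ relation after inverting. For $R^\ell$, the numerator has $\epsilon_-=-1,\epsilon_+=1$, giving a factor $\lambda/[(ik_-+\phi_-)(ik_++\phi_+)]$. Dividing by the denominator factor $\lambda/[(-ik_-+\phi_-)(ik_++\phi_+)]$, the $k_+$ terms cancel and we are left with $(-ik_-+\phi_-)/(ik_-+\phi_-)$ for $R_2^\ell/R_1^\ell$. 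This is the claimed formula. $R^r$ works the same way with $\epsilon_+=-1$, and now only the $k_+$ factors survive.

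\textbf{Step 3: item (i), $\lambda\in(\phi_-^2,\phi_+^2)$.} Here $k_+$ is purely imaginary, and $S_j=-\ol{W_j}/W_j$. Relation (4.23) follows from Step~1 provided we take the conjugate of the $k_+$-factor carefully. For $\lambda$ in this range, $f_{j,+}(k_+,\dott)$ is real-valued, and $ik_++\phi_+$ is real. So the $+$ factor cancels in $\ol{W}/W$. The $-$ factor contributes $\ol{(-ik_-+\phi_-)^{-1}}/(-ik_-+\phi_-)^{-1}=(-ik_-+\phi_-)/(ik_-+\phi_-)$, which yields (4.23).

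The main obstacle is bookkeeping. It must be checked that the normalized Jost solutions for $H_2$ at $-k_\pm$ are indeed $(\mp ik_\pm+\phi_\pm)^{-1}Af_{1,\pm}(-k_\pm)$. It also has to be confirmed that the factors never vanish: $\lambda\ne0$ on these intervals, and $\pm ik_\pm+\phi_\pm\neq0$ for real $k_\pm>0$. The asymptotics of $Af_{1,\pm}$, namely $(\pm ik_\pm+\phi_\pm)e^{\pm ik_\pm x}$ at $\pm\infty$, justify the normalization.
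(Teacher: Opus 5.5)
Your proposal is correct and follows the paper's route: write $f_{2,\pm}=(\pm ik_\pm+\phi_\pm)^{-1}Af_{1,\pm}$, apply $W(Af,Ag)=z\,W(f,g)$ to each Wronskian in Theorem~\ref{t4.3}, and simplify with $\lambda=(ik_\pm+\phi_\pm)(-ik_\pm+\phi_\pm)$. Your sign bookkeeping for $T$, $R^{\ell}$ and $R^{r}$ checks out. In item $(i)$ you correctly observe that $k_+$ is purely imaginary, so $f_{j,+}(k_+,\dott)$ and $ik_++\phi_+$ are real and the $k_+$-factor cancels in $\ol{W_j}/W_j$.
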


In the special case where $\phi_- = \phi_+ = 0$, equation \eqref{4.23} and the connection between reflection coefficients in \eqref{4.24} considerably simplify.

One can also derive the connection between norming constants for nonzero eigenvalues of $H_j$, $j=1,2$, but we omit the details here (cf.\ \cite{GSS91}.  

Next, we turn to the supersymmetric Dirac-type operator $Q = \begin{pmatrix} 0 & A^* \\ A & 0 \end{pmatrix}$ defined in \eqref{4.7}. 

We start with facts on the spectrum of $Q$:

\begin{theorem} [\cite{GSS91}] \lb{t4.5}  
Assume Hypothesis~\ref{h4.1}. Then
\begin{equation}
\sigma_{ess}(Q) = \sigma_{ac}(Q) = (-\infty,-|\phi_-|] \cup [|\phi_-|,\infty), \quad \sigma_{sc}(Q) = \emptyset. 
\end{equation}
Moreover, $Q$ has simple spectrum in the interval $(-|\phi_+|, - |\phi_-|) \cup (|\phi_-|,\phi_+|)$ $($this assumes $\phi_-^2 < \phi_+^2$$)$ and spectral multiplicity equal to two on the union of intervals $(-\infty, -|\phi_+|,) \cup (|\phi_+|,\infty)$. In addition, $Q$ has finitely many simple eigenvalues in the interval $(-|\phi_-|, |\phi_-|)$ $($assuming $\phi_- \neq 0$$)$, symmetrically placed with respect to zero. 

There are no eigenvalues embedded into the essential spectrum of $Q$ and there are no threshold eigenvalues, that is,
\begin{equation}
\sigma_p(Q) \cap \{(-\infty, -|\phi_-|] \cup [|\phi_-|, \infty)\} = \emptyset. 
\end{equation} 
\end{theorem}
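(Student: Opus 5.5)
The plan is to derive every statement about $Q$ from Theorem~\ref{t4.2}, using the supersymmetric structure of Subsection~\ref{sec13}. By Theorem~\ref{t3.3}\,$(i)$, on $(\ker(Q))^{\bot}$ the operator $Q$ is unitarily equivalent to $|A| \oplus (-|A^*|)$, where $|A| = H_1^{1/2}$ and $|A^*| = H_2^{1/2}$. Hence the spectral measure of $Q$ restricted to $(0,\infty)$ is the image of that of $H_1$ (away from $0$) under the monotone homeomorphism $\lambda \mapsto \lambda^{1/2}$ of $(0,\infty)$. On $(-\infty,0)$ it is the image of that of $H_2$ under $\lambda \mapsto -\lambda^{1/2}$. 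The point $0$ contributes only $\ker(Q) = \ker(A) \oplus \ker(A^*)$.

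Since the square root map is a diffeomorphism of $(0,\infty)$, it preserves absolute continuity, singular continuity and point masses of spectral measures, as well as spectral multiplicity. Theorem~\ref{t4.2} gives $\sigma_{ac}(H_j) = \sigma_{ess}(H_j) = [\phi_-^2,\infty)$ and $\sigma_{sc}(H_j) = \emptyset$. Transporting these facts yields $\sigma_{ac}(Q) = \sigma_{ess}(Q) = (-\infty,-|\phi_-|] \cup [|\phi_-|,\infty)$ and $\sigma_{sc}(Q)=\emptyset$. Possible edge effects at $0$ need a separate remark only if $\phi_- = 0$, and then $0$ lies in the essential spectrum anyway. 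The same transport gives simple spectrum on $\pm(|\phi_-|,|\phi_+|)$ and multiplicity two on $\pm(|\phi_+|,\infty)$. It also gives the absence of embedded or threshold eigenvalues for $\eta \neq 0$, because Theorem~\ref{t3.3}\,$(iii)$ shows that a nonzero eigenvalue $\eta$ of $Q$ forces $\eta^2 \in \sigma_p(H_1)$ or $\eta^2 \in \sigma_p(H_2)$.

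For the discrete eigenvalues in $(-|\phi_-|,|\phi_-|)\backslash\{0\}$, Theorem~\ref{t4.2} provides finitely many simple eigenvalues of $H_1$ in $(0,\phi_-^2)$, and these coincide with those of $H_2$. By Theorem~\ref{t3.3}\,$(iii)$, each such eigenvalue $\lambda$ produces exactly the two simple eigenvalues $\pm\lambda^{1/2}$ of $Q$, which are symmetric about zero. The symmetry is also visible directly from the unitary equivalence $\mathfrak{S}_3 Q \mathfrak{S}_3 = -Q$.

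The step I expect to be the main obstacle is the eigenvalue $0$, i.e., the kernel, together with the multiplicity bookkeeping near the thresholds. I would compute the kernel directly. Solutions of $A u = 0$ are multiples of $e^{-\int_0^x \phi}$, and solutions of $A^* v = 0$ are multiples of $e^{\int_0^x \phi}$. By Hypothesis~\ref{h4.1} these are in $L^2(\bbR)$ exactly when $\phi_- < 0 < \phi_+$, respectively $\phi_+ < 0 < \phi_-$. Consequently at most one of $\ker(A)$, $\ker(A^*)$ is nontrivial, and then it is one-dimensional. This yields a simple eigenvalue $0$ of $Q$, consistent with both simplicity and the symmetric placement. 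When $\phi_- = 0$, no such $L^2$ solution exists, which matches the no-threshold-eigenvalue claim. The remaining detail is that equality of the spectral multiplicities of $H_j$ and of $Q$ on the relevant intervals follows from the unitary equivalence in Theorem~\ref{t3.3}\,$(i)$ once the kernel is split off.
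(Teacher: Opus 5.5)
Your proposal is correct and follows essentially the route the paper relies on. The paper gives no separate proof (it cites \cite{GSS91}); the intended derivation is to transport Theorem~\ref{t4.2} for $H_1 = A^*A$ and $H_2 = AA^*$ through the supersymmetric structure of Subsection~\ref{sec13}, namely the unitary equivalence of $Q$ on $(\ker(Q))^{\bot}$ with $|A| \oplus (-|A^*|)$, Theorem~\ref{t3.3}\,$(iii)$ for nonzero eigenvalues, and $\mathfrak{S}_3 Q \mathfrak{S}_3 = -Q$ for the symmetry, which is what you do. Your direct computation of $\ker(Q) = \ker(A) \oplus \ker(A^*)$ via $e^{\mp \int_0^x \phi}$ is the same zero-energy analysis as in Remark~\ref{r.3.29}, and it correctly handles both the possible simple eigenvalue $0$ (exactly when $\phi_-\phi_+ < 0$) and the absence of a threshold eigenvalue at $0$ when $\phi_- = 0$.
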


Jost solutions for $Q$ are given as follows,
\begin{align}
\begin{split}
& F_{1,\pm}(\zeta,k_{\pm},x) = \begin{pmatrix} f_{1,\pm}(k_{\pm},x) \\[1mm] 
\zeta^{-1} (A f_{1,\pm})(k_{\pm},x) \end{pmatrix},    \\
&  F_{2,\pm}(\zeta,k_{\pm},x) = \begin{pmatrix} - \zeta^{-1} (A^* f_{2,\pm})(k_{\pm},x) \\[1mm] 
f_{2,\pm}(k_{\pm},x) \end{pmatrix},     \\
& k_{\pm} = (z - \phi_{\pm}^2)^{1/2}, \; \Im(k_{\pm}) \geq 0, \quad \zeta^2 = z, \; \zeta, z \in \bbC \backslash\{0\}, \; x \in \bbR, 
\end{split}
\end{align}
such that 
\begin{equation}
(Q F_{j,\pm})(\zeta,k_{\pm},\dott) = (-1)^{j+1} \zeta F_{j,\pm}(\zeta,k_{\pm},\dott), \quad \zeta \in \bbC \backslash \{0\}, \; j=1,2.
\end{equation}

The scattering matrix $S_Q(\dott)$ associated with $Q$ then can be described as follows:

\begin{theorem} [\cite{GSS91}] \lb{t4.6}  
Assume Hypothesis~\ref{h4.1}.  \\
$(i)$ If $|\mu| \in (|\phi_-|, |\phi_+|)$, then the unimodular scattering function $S_Q(\dott)$ is of the form, 
\begin{equation}
S_Q(\mu) = - \f{\ol{W(F_-(\mu,k_-, \dott), F_+(\mu,k_+,\dott))}}{W(F_-(\mu,k_-, \dott), F_+(\mu,k_+,\dott))}, \quad \mu \in 
(|\phi_-||, |\phi_+|).
\end{equation}
$(ii)$ If $\mu \in (-\infty,-|\phi_+|) \cup (|\phi_+|,\infty)$, then the unitary scattering matrix $S_Q(\dott)$ in $\bbC^2$ is given by
\begin{equation}
S_Q(\mu) = \begin{pmatrix} T_Q(\mu) & R_Q^r(\mu) \\[1mm] R_Q^{\ell}(\mu) & T_Q(\mu) \end{pmatrix}, 
\end{equation}
where the transmission and reflection coefficients from left and right incidence are of the form 
\begin{align}
\begin{split} 
& T_Q(\mu) = \f{2i (k_-k_+)^{1/2}}{W(F_-(\mu,k_-, \dott), F_+(\mu,k_+,\dott))},    \\
& R_Q^{\ell}(\mu) = - \f{W(F_-(\mu,-k_-, \dott), F_+(\mu,k_+,\dott))}{W(F_-(\mu,k_-, \dott), F_+(\mu,k_+,\dott))},   \\
& R_Q^r(\mu) = - \f{W(F_-(\mu,k_-, \dott), F_+(\mu,-k_+,\dott))}{W(F_-(\mu,k_-, \dott), F_+(\mu,k_+,\dott))},   \\
& \hspace*{2.15cm} \mu \in (-\infty,-|\phi_+|) \cup (|\phi_+|,\infty).  
\end{split}
\end{align}
Here we used 
\begin{equation}
F_{\pm}(\mu,k_{\pm},x) = \begin{cases} F_{1,\pm}(\mu,k_{\pm},x), & \mu \in (|\phi_-|,\infty),  \\
F_{2,\pm}(-\mu,k_{\pm},x),  & \mu \in (-\infty, -|\phi_-|),
\end{cases} \quad x \in \bbR,
\end{equation}
and $W(F,G)$ denotes the $2 \times 2$ determinant of the column vectors $F, G \in \bbC^2$.
\end{theorem}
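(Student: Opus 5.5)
The plan is to reduce everything about $Q$ to the already established Schr\"odinger scattering data of $H_1=A^*A$ and $H_2=AA^*$, using supersymmetry. First I would check that $F_{1,\pm}$ and $F_{2,\pm}$ are genuine Jost solutions of $Q$. This is a direct check. Since $A^*A f_{1,\pm}=z f_{1,\pm}$ with $z=\zeta^2$, we get
\[
Q F_{1,\pm} = \bigl(\zeta^{-1}A^*Af_{1,\pm},\, Af_{1,\pm}\bigr)^{\top} = \zeta F_{1,\pm}.
\]
The same computation gives $QF_{2,\pm}=-\zeta F_{2,\pm}$. The asymptotics as $x\to\pm\infty$ follow from those of $f_{j,\pm}$, because $\phi\to\phi_\pm$ and $f_{1,\pm}\sim e^{\pm ik_\pm x}$. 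Hence $\zeta^{-1}Af_{1,\pm}\sim \zeta^{-1}(\pm ik_\pm+\phi_\pm)e^{\pm ik_\pm x}$, so each component has plane-wave behavior with explicit constant prefactors.

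Next I would set up the stationary scattering theory for $Q$ along the lines of Theorem~\ref{t4.3}. Take $\mu\in\sigma_{ess}(Q)$ with $\mu>|\phi_-|$; by the symmetry $\mathfrak S_3 Q\mathfrak S_3=-Q$, the negative side is handled by $F_{2,\pm}(-\mu,\cdot)$. For the two-dimensional solution space of $(Q-\mu)F=0$, the $2\times2$ determinant $W(F,G)$ is $x$-independent, since $Q-\mu$ is a first-order system with traceless coefficient after multiplying by $\left(\begin{smallmatrix}0&1\\-1&0\end{smallmatrix}\right)$. I would then expand the physical scattering solution (incoming from the left) as
\[
T_Q F_+(\mu,k_+) = F_-(\mu,-k_-) + R^\ell_Q F_-(\mu,k_-),
\]
and analogously from the right. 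Taking determinants against $F_\pm$ and using the Wronskian identities $W(F_\pm(k_\pm),F_\pm(-k_\pm))$, computed from the asymptotics, yields the stated quotient formulas. In the band $|\phi_-|<\mu<|\phi_+|$, $k_+$ is purely imaginary, so only one channel is open. There, reality of $\phi$ gives $\overline{F_-(\mu,k_-)}=F_-(\mu,-k_-)$ and $F_+$ real, and the same expansion produces the unimodular phase $S_Q=-\overline{W}/W$.

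The normalization is where I would be most careful, and I expect it to be the main obstacle. To get the factor $2i(k_-k_+)^{1/2}$ and unitarity, the Jost solutions must be normalized so that the current, i.e.\ the Wronskian of $F_\pm(k)$ with $F_\pm(-k)$, equals $\pm2ik_\pm$ up to a common factor that cancels in the ratios. I would relate
\[
W\bigl(F_{1,-}(\mu,k),F_{1,+}(\mu,k')\bigr) = \mu^{-1}\bigl[f_{1,-}\,(Af_{1,+}) - (Af_{1,-})\,f_{1,+}\bigr] = \mu^{-1}W(f_{1,-},f_{1,+}),
\]
which holds because the $\phi$ terms cancel. This directly ties the $Q$-Wronskians to those of $H_1$ and hence to Theorem~\ref{t4.3}.

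Unitarity of $S_Q(\mu)$ then follows either from flux conservation, i.e.\ constancy of $W(\overline F,F)$, or by transferring unitarity of $S_1(\mu^2)$ through this identity and Theorem~\ref{t4.4}. Any leftover constant normalization factors will be absorbed into the definition of the scattering matrix via the asymptotic amplitudes of $F_\pm$.
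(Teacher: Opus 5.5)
Your strategy is the right one, and it is the route the paper points to. The paper gives no proof of its own (it cites \cite{GSS91}), but right after the statement it records exactly your identity $W(F_-(\mu,\sigma k_-,\cdot),F_+(\mu,\sigma' k_+,\cdot))=\mu^{-1}W(f_{j,-},f_{j,+})$ and deduces Theorem~\ref{t4.7} from it. The following parts of your proposal all go through:
the eigen-solution checks; the constancy of $\det(F,G)$ (indeed $QF=\mu F$ reads $F'=\left(\begin{smallmatrix}-\phi&\mu\\-\mu&\phi\end{smallmatrix}\right)F$); part $(i)$; and the formulas for $R_Q^{\ell}$, $R_Q^{r}$.

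The gap is exactly at the step you flagged as the main obstacle, and your last two sentences do not close it. With the $F_{j,\pm}$ as defined, your identity gives $W(F_\pm(\mu,-k_\pm,\cdot),F_\pm(\mu,k_\pm,\cdot))=\pm 2ik_\pm/\mu$. The left-incidence expansion then gives $\widetilde T_Q=2ik_-\mu^{-1}/W(F_-(\mu,k_-,\cdot),F_+(\mu,k_+,\cdot))$. Constancy of the current $W(\overline F,F)$ gives $|R_Q^{\ell}|^2+(k_+/k_-)|\widetilde T_Q|^2=1$. Hence
\[
T_Q(\mu)=\Big(\frac{k_+}{k_-}\Big)^{1/2}\widetilde T_Q(\mu)=\frac{2i(k_-k_+)^{1/2}}{\mu\, W(F_-(\mu,k_-,\cdot),F_+(\mu,k_+,\cdot))}.
\]
The factor $\mu^{-1}$ is common to both currents, so it cancels in the reflection coefficients and in part $(i)$. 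It does not cancel in $T_Q$ once $T_Q$ is written in terms of $W(F_-,F_+)$.

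Nor can this factor be ``absorbed into the definition'' of the scattering matrix. The $F_{j,\pm}$ are given explicitly, and since $|R_Q^{\ell}|$ is already determined, unitarity determines $|T_Q|$. Carried out honestly, your argument therefore proves $T_Q(\mu)=T_1(\mu^2)$ for $\mu>|\phi_+|$ and $T_Q(\mu)=T_2(\mu^2)$ for $\mu<-|\phi_+|$; the same $\mu^{-1}$ arises for $F_{2,\pm}(-\mu,\cdot)$. This agrees with the Wronskian identity, with Theorem~\ref{t4.7}, and with the asserted unitarity. The displayed formula, by contrast, would give $T_Q(\mu)=\mu\,T_1(\mu^2)$, which is not unitary for $|\mu|\neq 1$. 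So the displayed $T_Q$ is missing a factor $\mu^{-1}$, either a misprint or a normalization of $F_{j,\pm}$ different from the one stated. Your proof should derive and state the corrected formula rather than claim the displayed one.

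A minor point: $\mathfrak S_3$ maps $F_{1,\pm}(\mu,\cdot)$ to $F_{1,\pm}(-\mu,\cdot)$, not to $F_{2,\pm}$. In fact $F_{2,\pm}(-\mu,\cdot)=\mu(\pm ik_\pm+\phi_\pm)^{-1}F_{1,\pm}(\mu,\cdot)$. This is harmless, since you verify $QF_{2,\pm}=-\zeta F_{2,\pm}$ directly.
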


Finally, taking into account the Wronskian identity 
\begin{align} 
\begin{split} 
& W(F_-(\mu,\sigma k_-,\dott), F_+(\mu,\sigma' k_+,\dott))      \\ 
& \quad = \begin{cases} \mu^{-1}
W(f_{1,-}(\sigma k_-,\dott), f_{1,+}(\sigma' k_+,\dott)), & \mu \in (|\phi_-|,\infty), \\
\mu^{-1} W(f_{2,-}(\sigma k_-,\dott), f_{2,+}(\sigma' k_+,\dott)), & \mu \in (-\infty,-|\phi_-|), 
\end{cases}      \\
& \hspace*{7.25cm} \sigma, \sigma' \in \{-1,1\}, 
\end{split}  
\end{align}
one confirms the following connection between the scattering matrices of $Q$ and $H_j$, $j=1,2$:

\begin{theorem} [\cite{GSS91}] \lb{t4.7}  
Assume Hypothesis~\ref{h4.1}. Then 
\begin{equation}
S_Q(\mu) = \begin{cases} S_1(\mu^2), & \mu \in (|\phi_-|,\infty), \\
S_2(\mu^2), & \mu \in (-\infty, -|\phi_-|).
\end{cases}
\end{equation}
\end{theorem}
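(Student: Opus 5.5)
The plan is to compare the two formulas for the scattering data term by term, using the Wronskian identity displayed just before Theorem~\ref{t4.7} together with the formulas of Theorems~\ref{t4.3} and \ref{t4.6}. I treat the case $\mu \in (|\phi_-|,\infty)$ in detail. There $F_\pm = F_{1,\pm}(\mu,k_\pm,\dott)$ with $k_\pm = (\mu^2-\phi_\pm^2)^{1/2}$, which are exactly the momenta appearing in $S_1(\lambda)$ at $\lambda=\mu^2$. The case $\mu<-|\phi_-|$ is identical after replacing $F_{1,\pm}$ by $F_{2,\pm}(-\mu,\dott)$, $f_{1,\pm}$ by $f_{2,\pm}$, and $\mu^{-1}$ by the corresponding factor.

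First I verify the Wronskian identity directly. Writing $F_{1,-}=(f_{1,-},\mu^{-1}Af_{1,-})^\top$ and similarly for $F_{1,+}$, the $2\times2$ determinant is
\[
\mu^{-1}\big[f_{1,-}(Af_{1,+}) - (Af_{1,-})f_{1,+}\big].
\]
Since $A=d/dx+\phi$, the $\phi$-terms cancel and this equals $\mu^{-1}W(f_{1,-},f_{1,+})$, with any choice of signs $\sigma k_-$, $\sigma' k_+$. For $F_{2,\pm}$ one similarly uses $A^*=-d/dx+\phi$ and obtains a common factor of the same kind, up to sign. The only point to watch is that in the $F_{2}$ case the factor is $\pm(-\mu)^{-1}$, and one must check that it is the same for every Wronskian involved.

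Next I substitute into Theorem~\ref{t4.6}. In the reflection coefficients $R_Q^{\ell}$ and $R_Q^r$, and in the unimodular function $S_Q$ on $(|\phi_-|,|\phi_+|)$, the common factor $\mu^{-1}$ cancels between numerator and denominator. For the latter, $\mu$ is real, so $\overline{\mu^{-1}}=\mu^{-1}$ and the cancellation also goes through the complex conjugate. These quantities then reproduce $R_1^{\ell}(\mu^2)$, $R_1^r(\mu^2)$ and $S_1(\mu^2)$ from Theorem~\ref{t4.3}.

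I expect the transmission coefficient to be the main obstacle. There the factor $\mu^{-1}$ does not cancel automatically, since $T_Q$ has $2i(k_-k_+)^{1/2}$ in its numerator. So I would first recheck the normalization of the Dirac Jost solutions, or equivalently the normalization in the definition of $S_Q$ via flux. The aim is to confirm that the stated normalization absorbs the factor $\mu$ coming from the velocity $d\mu/dk$ of the Dirac operator, so that $T_Q(\mu)=T_1(\mu^2)$. If this normalization turns out to differ, I would adjust the Jost solutions by the scalar factor needed to make $S_Q$ unitary. Uniqueness of the unitary scattering matrix with the given asymptotics then forces the identification with $S_1(\mu^2)$, because $Q$ restricted to $\mu>0$ is unitarily equivalent to $|A|$ by Theorem~\ref{t3.3}(i), and $|A|$ has the same scattering as $H_1$ under the map $\lambda=\mu^2$.
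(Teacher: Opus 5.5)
Your route is the paper's. Its entire argument for Theorem~\ref{t4.7} is to insert the Wronskian identity $W(F_-(\mu,\sigma k_-,\dott),F_+(\mu,\sigma' k_+,\dott))=\mu^{-1}W(f_{j,-}(\sigma k_-,\dott),f_{j,+}(\sigma' k_+,\dott))$ into Theorem~\ref{t4.6} and compare with Theorem~\ref{t4.3}. Your verification of that identity is correct, and so is the cancellation of $\mu^{-1}$ in $R_Q^{\ell}$, $R_Q^{r}$ and in the unimodular $S_Q$. For $F_{2,\pm}(-\mu,\dott)$ the factor is again exactly $\mu^{-1}$, because $-\zeta^{-1}=\mu^{-1}$ and $(A^*u)v-(A^*v)u=W(u,v)$, so there is no sign ambiguity to check. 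You are also right that the transmission entry does not follow: with $T_Q$ literally as displayed, the identity gives $T_Q(\mu)=\mu\,T_1(\mu^2)$. The paper's one-line argument passes over this.

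This is exactly the entry your proposal leaves open, and neither of your branches closes it.
\begin{itemize}
\item The displayed normalization does \emph{not} absorb $\mu$, so your ``confirm'' step fails. The display is in fact incompatible with the unitarity asserted in the same theorem: $|T_Q(\mu)|^2=1-|R_Q^{\ell}(\mu)|^2=1-|R_1^{\ell}(\mu^2)|^2=|T_1(\mu^2)|^2$, whereas the display gives $|T_Q(\mu)|=\mu\,|T_1(\mu^2)|$ with $T_1(\mu^2)\neq 0$.
\item The missing step is the Dirac flux normalization; your remark about the velocity $d\mu/dk=k/\mu$ points in the right direction. The identity you proved gives $W(F_\pm(\mu,-k_\pm,\dott),F_\pm(\mu,k_\pm,\dott))=\pm 2ik_\pm/\mu$, i.e., the Schr\"odinger fluxes divided by $\mu$.
\item Write the left scattering solution as $\tau F_+(\mu,k_+,\dott)=F_-(\mu,-k_-,\dott)+\rho\,F_-(\mu,k_-,\dott)$. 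Taking Wronskians with $F_-(\mu,k_-,\dott)$ gives $\tau=2ik_-/[\mu\,W(F_-,F_+)]$. The flux ratio $(k_+/\mu)/(k_-/\mu)=k_+/k_-$ is the same as for $H_1$, so the unitary transmission coefficient is
\[
T_Q(\mu)=\frac{2i(k_-k_+)^{1/2}}{\mu\,W(F_-(\mu,k_-,\dott),F_+(\mu,k_+,\dott))}.
\]
Then $T_Q(\mu)=T_1(\mu^2)$ follows at once, and likewise $T_Q(\mu)=T_2(\mu^2)$ for $\mu<-|\phi_+|$.
\end{itemize}

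Your fallback cannot replace this computation. Unitarity together with $R_Q=R_1$ does not determine $T_Q$: at a reflectionless energy every unimodular value is compatible. The unitary equivalence of $Q$ on its positive spectral subspace with $|A|$ (Theorem~\ref{t3.3}\,$(i)$) is implemented through the nonlocal partial isometry $V_A$. By itself it says nothing about the entries of $S_Q$ in the Jost-solution representation, which are defined by comparison with the asymptotic dynamics at $\pm\infty$.
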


\subsection{Applications to Floquet Theory}\label{sec15}
In this subsection we consider Floquet theory for one-dimensional Schr\"odinger and Dirac-type operators in the supersymmetric context. In this case $A$ becomes an $L^2(\bbR)$-realization of the differential expression $A= (d/dx) + \phi(x)$, $x \in \bbR$, for an appropriate real-valued periodic function $\phi$ on $\bbR$. The material of this section is taken from \cite{Ge92}, \cite[Sect.~7.5]{GNZ24}, \cite{GSS91}, \cite{GW95a}. 

We start by reviewing some elements of Floquet theory and hence introduce the following convenient set of assumptions.

\begin{hypothesis} \lb{h5.1}
Let $\omega \in (0,\infty)$ and suppose that $\phi \colon \bbR \to \bbR$ satisfies
\begin{align}
\phi, \phi' \in L^{\infty}(\bbR), \quad \phi(x + \omega) = \phi(x), \quad x \in \bbR.
\end{align}
\end{hypothesis}

Consider a fundamental system of distributional solutions $\vartheta_j(z,\dott), \varphi_j(z,\dott)$ of $H_j u(z,\dott) =z u(z,\dott)$, $z \in \bbC$, $j=1,2$, normalized by 
\begin{align}
\begin{split} 
& \vartheta_j(z,0) = 1, \quad \vartheta_j'(z,0) = 0, \\
& \varphi_j(z,0) = 0, \quad \varphi_j'(z,0) = 1, \quad z \in \bbC.    \lb{5.2}
\end{split} 
\end{align}
Then for fixed $x \in \bbR$, $\vartheta_j(z,x)$ and $\varphi_j(z,x)$ are entire with respect to $z \in \bbC$. 

Thus, $\Phi_j(z,\dott)$ given by 
\begin{equation}
\Phi_j(z,\dott) = \begin{pmatrix} \vartheta_j(z,\dott) & \varphi_j(z,\dott) \\
\vartheta_j'(z,\dott) & \varphi_j'(z,\dott) \end{pmatrix}, \quad z \in \bbC, \; j=1,2, 
\end{equation}
represents a fundamental solution matrix of $H_j u(z,\dott) =z u(z,\dott)$, $z \in \bbC$, $j=1,2$, normalized by 
\begin{equation}
\Phi_j(z,0) = I_2, \quad z \in \bbC, \; j=1,2.
\end{equation}
The monodromy matrix $\cM_j(\dott)$ associated with $H_j$, $j=1,2$, is then given by
\begin{equation}
\cM_j(z) = \Phi_j(z,\omega) = \begin{pmatrix} \vartheta_j(z,\omega) & \varphi_j(z,\omega) \\
\vartheta_j'(z,\omega) & \varphi_j'(z,\omega) \end{pmatrix}, \quad z \in \bbC, \; j=1,2,    \lb{5.5}
\end{equation}
and the corresponding Floquet discriminant $D_j(\dott)$, $j=1,2$, equals one half the trace of the monodromy matrix, that is,
\begin{equation}
D_j(z) = \tr_{\bbC^2}(\cM_j(z))/2 = [\vartheta_j(z,\omega) + \varphi_j'(z,\omega)]/2, \quad z \in \bbC, \; j=1,2.    \lb{5.6}
\end{equation}
Taking into account that 
\begin{equation} 
H_1=A^*A = - (d^2/dx^2) + V_1, \quad H_2=AA^* = - (d^2/dx^2) + V_2, 
\end{equation} 
where once again 
$V_j = \phi^2 + (-1)^j \phi'$, $j=1,2$, and $\phi$ are related via Miura's transformation \eqref{4.6}, one infers that 
\begin{align}
\begin{split} 
& \vartheta_1(z,\dott), \quad \varphi_1(z,\dott),    \\
& \vartheta_2(z,\dott) = A \big\{z^{-1} \phi(0) \vartheta_1(z,\dott) +[1 - z^{-1} \phi(0)^2] \varphi_1(z,\dott)\big\},   \\
& \varphi_2(z,\dott) = A \big\{- z^{-1} \vartheta_1(z,\dott) + z^{-1} \phi(0) \varphi_1(z,\dott)\big\}; \quad z \in \bbC \backslash \{0\},    \lb{5.7}
\end{split} 
\end{align}
satisfy the boundary conditions in \eqref{5.2}. As a consequence one actually infers equality of $D_1(\dott)$ and $D_2(\dott)$, that is,
\begin{equation}
D_1(z) = D_2(z) = D(z), \quad z \in \bbC.    \lb{5.8} 
\end{equation}
To investigate $D(\dott)=D_j(\dott)$ further, we now introduce the following families $H_{\omega,j}(\varphi,x_0)$, $\varphi \in [-\pi,\pi]$, $j=1,2$, in $L^2((x_0,x_0+\omega))$ as follows:
\begin{align}
& H_{\omega,j}(\varphi,x_0) f =  - f'' +V_j f,  \quad \varphi \in [- \pi,\pi],      \lb{6.5.37} \\
& f \in \dom(H_{\omega,j}(\varphi,x_0)) = \big\{g \in L^2((x_0,x_0+\omega)) \,\big|\, g, g' \in AC([x_0,x_0+\omega]);
\no \\
& \hspace*{1.95cm}
g^{(k)}(x_0+\omega) = e^{i \varphi} g^{(k)}(x_0), \, k=0,1; \, [-g'' + V_j g] \in L^2((x_0,x_0+\omega))\big\}, \no
\end{align}
with $($$x_0$-independent\,$)$ discrete spectra of the type 
\begin{align}
\begin{split} 
\sigma(H_{\omega,j}(\varphi,x_0)) = \{\lambda_{\omega,n}(\varphi)\}_{n \in \bbN_0}, 
\lambda_{\omega,n}(\varphi) \leq \lambda_{\omega,n+1}(\varphi),&  \\ 
n \in \bbN_0, \; \varphi \in [- \pi,\pi], \; j=1,2,&
\end{split} 
\end{align}
with
\begin{equation}
\lambda \in \sigma(H_{\omega,j}(\varphi,x_0)) \, \text{ if and only if } \, D(\lambda) = \cos(\varphi),
\quad \varphi \in [- \pi,\pi], \; j=1,2.
\end{equation}
In particular, $H_{\omega,j}(\varphi,x_0)$ and $T_{\omega,j}(- \varphi,x_0)$, $j=1,2$, are antiunitarily equivalent via
complex conjugation, hence they have identical eigenvalues and complex conjugate eigenfunctions.

The precise band and gap structure of $\sigma(H_{\omega,j})$, depends on the discrete spectra of $T_{\omega,j}(\varphi,x_0)$, $j=1,2$, as $\varphi$ varies in $[- \pi, \pi]$, see Theorem~\ref{t5.2}. In this context, we now introduce the sequence
\begin{equation}
\{E_n\}_{n \in \bbN_0}, \quad E_n \leq E_{n+1}, \; n \in \bbN_0,
\end{equation}
via
\begin{equation}
\{\lambda_{\omega,n}(0)\}_{n \in \bbN_0} = \{E_0, E_{4n-1}, E_{4n}\}_{n \in \bbN}, \quad
\{\lambda_{\omega,n}(\pi)\}_{n \in \bbN_0} = \{E_{4n+1}, E_{4n+2}\}_{n \in \bbN_0},
\end{equation}
and note the identity
\begin{equation}
D(z)^2 - 1 = \omega^2 (E_0 - z) \prod_{n=1}^{\infty} \big[(E_{2n-1}-z) (E_{2n}-z) \omega^4 \pi^{-4} n^{-4}\big], 
\quad z \in \bbC.
\end{equation}

Weyl--Titchmarsh solutions for $H_j u(z,\dott) =z u(z,\dott)$, $z \in \bbC \backslash \bbR$, $j=1,2$, are then of the form
\begin{equation}
\psi_{j,\pm}(z,x) = \vartheta_j(z,x) + m_{j,\pm}(z) \varphi_j(z,x), \quad z \in \bbC \backslash \bbR, \; x \in \bbR, \; j=1,2,  \lb{5.9}
\end{equation}
and satisfy
\begin{equation}
\psi_{j,+} (z,\dott) \in L^2((0,\infty)), \; \psi_{j,-} (z,\dott) \in L^2((-\infty,0)), \quad z \in \bbC \backslash \bbR,    \lb{5.10}
\end{equation}
with the Weyl--Titchmarsh function $m_{j,\pm}(\dott)$, $j=1,2$, given by
\begin{equation}
m_{j,\pm}(z) = \big\{D(z) - \vartheta_j(z,\omega) \pm [D(z)^2 - 1]^{1/2}\big\}\big/\varphi_j(z,\omega), \quad z \in \bbC\backslash \bbR.     \lb{5.11} 
\end{equation}

In addition one can show that $\psi_{j,\pm}(z,\dott)$ are Floquet solutions satisfying 
\begin{equation}
\psi_{j,\pm}(z,x+\omega) = \rho_{j,\pm}(z) \psi_{j,\pm}(z,x), \quad z \in \bbC \backslash \bbR, \; x \in \bbR,    \lb{5.12} 
\end{equation}
and 
\begin{equation}
W(\psi_{j,+}(z,\dott), \psi_{j,-}(z,\dott)) = \f{\rho_{j,-}(z) - \rho_{j,+}(z)}{\varphi_j(z,\omega)}, \quad z \in \bbC \backslash \bbR, 
\lb{5.13} 
\end{equation}
where 
\begin{align}
& \sigma (\cM_j(z)) = \{\rho_{j,+}(z), \rho_{j,-}(z)\},    \lb{5.14} \\
& \rho_{j,\pm}(z) = D_j(z) \pm [D_j(z)^2 - 1]^{1/2},     \lb{5.15} \\
& \rho_{j,+}(z) + \rho_{j,-}(z) = 2 D_j(z), \quad \rho_{j,+}(z) \rho_{j,-}(z) =1,    \lb{5.16} \\
& \rho_{j,\pm}(z) \notin \{-1,1\}; \quad z \in \bbC \backslash \bbR, \; j=1,2.     \lb{5.17}
\end{align}
Moreover,
\begin{align}
&\psi_{j,\pm}(z,x) = e^{\mp \kappa(z) x} p_{j,\pm}(z,x), \quad x \in \bbR,   \lb{5.18} \\
& p_{j,\pm}(z,x+\omega) = p_{j,\pm}(z,x), \quad x \in \bbR,    \lb{5.19} \\
& \rho_{j,\pm}(z) = e^{\mp \kappa(z)}; \quad z \in \bbC \backslash \bbR, \; j=1,2.     \lb{5.20}
\end{align}

One notes that by analytic continuation, \eqref{5.9}--\eqref{5.20} extend to all $z \in \rho(H_j)$, $j=1,2$, in fact one can even show they extend to all $z \in \bbC$ as long as $\rho_{j,+}(z) \neq \rho_{j,-}(z)$ which occurs as long as $\rho_{j,\pm} \notin \{-1,1\}$, equivalently, as long as $D_j(z) \notin \{-1,1\}$ (see \cite[Example~7.5.1]{GNZ24} for a detailed discussion).

Regarding the spectra of $H_j$, $j=1,2$, we recall the following result:

\begin{theorem} [\cite{Ge89}, \cite{GNZ24},~Sect.~7.5, \cite{GSS91}] \lb{t5.2}  
Assume Hypothesis~\ref{h5.1}. Then the spe\-ctra of $H_1$ and $H_2$ coincide, they are purely absolutely continuous and of multiplicity two. In particular,
\begin{align} 
& \sigma(H_j) = \sigma_{ac}(H_j) = \{\lambda \in \bbR \, | \, |D(\lambda)| \leq 1\} = \bigcup_{n \in \bbN} \sigma_n 
= \bigcup_{\varphi \in [-\pi,\pi]} \sigma(H_{\omega,j}(\varphi,x_0)),    \no \\
& \sigma_n = [E_{2n-2}, E_{2n-1}], \quad n \in \bbN, \quad 0 \leq E_0 < E_1 \leq E_2 < E_3 \leq E_4 < \cdots , \no \\
& \sigma_p(H_j) = \sigma_{sc}(H_j) = \emptyset; \quad j=1,2.
\end{align}

\end{theorem}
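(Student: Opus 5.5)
The approach is to treat $H_1$ first by standard Floquet theory and then transfer everything to $H_2$ using the equality $D_1=D_2=D$ in \eqref{5.8} together with the supersymmetric structure $H_1=A^*A$, $H_2=AA^*$.

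\textbf{Band structure of $H_1$ via a direct integral.} Since $V_1=\phi^2-\phi'\in L^\infty(\bbR)$ is $\omega$-periodic, the Floquet--Bloch (Gelfand) transform gives a unitary equivalence
\[
H_1 \simeq \int_{[-\pi,\pi]}^{\oplus} H_{\omega,1}(\varphi,x_0)\,\frac{d\varphi}{2\pi}.
\]
Hence $\sigma(H_1)=\ol{\bigcup_{\varphi}\sigma(H_{\omega,1}(\varphi,x_0))}$. Each fiber has discrete spectrum, characterized by $D(\lambda)=\cos(\varphi)$. This yields $\sigma(H_1)=\{\lambda\in\bbR\,|\,|D(\lambda)|\le 1\}$, and the union over $\varphi$ is already closed by continuity of the eigenvalue branches $\lambda_{\omega,n}(\varphi)$.

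The band picture $\sigma_n=[E_{2n-2},E_{2n-1}]$ comes from the standard analysis of the entire function $D$. The relation $D^2-1$ equals the stated product, and $D'(\lambda)\neq 0$ whenever $|D(\lambda)|<1$, by the usual Wronskian/variational formula for $D'$. Thus $D$ is strictly monotone on each band, runs between $\pm1$, and $\lambda_{\omega,n}(\varphi)$ sweeps $\sigma_n$ once as $|\varphi|$ goes from $0$ to $\pi$.

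\textbf{Absence of point and singular continuous spectrum.} Each eigenvalue branch $\lambda_{\omega,n}(\cdot)$ is real analytic and nonconstant on $(0,\pi)$, since $D$ is nonconstant. Standard direct-integral theory (Reed--Simon XIII.16) then rules out singular spectrum and gives $\sigma_p(H_1)=\sigma_{sc}(H_1)=\emptyset$.

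Alternatively, one can argue via the Weyl--Titchmarsh functions \eqref{5.11}. For $\lambda$ in a band interior, $[D(\lambda)^2-1]^{1/2}$ is purely imaginary and nonzero, so $\Im m_{1,\pm}(\lambda+i0)\neq0$ and is finite, except possibly at the finitely many points per band where $\varphi_1(\lambda,\omega)=0$. That exceptional set is discrete, and one checks boundedness there via \eqref{5.13}. Multiplicity two follows because both half-line $m$-functions have nontrivial absolutely continuous parts on the band interiors; equivalently, the $2\times2$ Weyl matrix has rank-two imaginary part.

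\textbf{Transfer to $H_2$.} By \eqref{5.8}, $D_1=D_2$, so the same band description holds verbatim for $H_2$. Alternatively, Theorem~\ref{t3.2}\,$(ii)$ gives $\sigma(H_1)\backslash\{0\}=\sigma(H_2)\backslash\{0\}$, with the absolutely continuous parts matching through the unitary equivalence \eqref{3.179} off the kernels.

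The remaining point is $0$. We have $E_0\ge 0$ since $H_j\ge0$. The kernels are trivial in $L^2$: $\ker A$ is spanned by $e^{-\int\phi}$ and $\ker A^*$ by $e^{\int\phi}$, and with periodic $\phi$ neither lies in $L^2(\bbR)$. So $0$ is never an eigenvalue, and whether $0\in\sigma$ is decided by $|D(0)|\le1$ for both operators alike.

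\textbf{Main obstacle.} The step I expect to be hardest is the rigorous monotonicity of $D$ on bands, i.e. $D'(\lambda)\neq0$ when $|D(\lambda)|<1$, together with the ordering $E_0<E_1\le E_2<\cdots$. I would prove it through the identity
\[
D'(\lambda)=-\tfrac{1}{2}\int_0^\omega\big[\varphi(\lambda,\omega)\vartheta(\lambda,x)^2+(\varphi'(\lambda,\omega)-\vartheta(\lambda,\omega))\vartheta(\lambda,x)\varphi(\lambda,x)-\vartheta'(\lambda,\omega)\varphi(\lambda,x)^2\big]\,dx.
\]
The integrand is a definite quadratic form when $D^2<1$.
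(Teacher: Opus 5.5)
Your proposal is correct, and it is the standard Floquet argument the paper relies on. The paper gives no proof of its own: it quotes the theorem from the cited literature, after setting up exactly your ingredients, namely $D_1=D_2=D$ via the intertwining \eqref{5.7}--\eqref{5.8} and the characterization $\lambda\in\sigma(H_{\omega,j}(\varphi,x_0))$ if and only if $D(\lambda)=\cos(\varphi)$.

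Two small sharpenings:
\begin{itemize}
\item The Wronskian relation $\vartheta_1\varphi_1'-\vartheta_1'\varphi_1=1$ at $x=\omega$ forces $|D(\lambda)|\ge 1$ whenever $\varphi_1(\lambda,\omega)=0$. So the exceptional set in your $m$-function argument never meets a band interior.
\item Since you showed $\ker(A)=\ker(A^*)=\{0\}$, the partial isometry $V_A$ is unitary. Hence $H_2=V_AH_1V_A^*$, which transfers everything to $H_2$ at once, including spectral type and multiplicity.
\end{itemize}
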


Similarly, the spectrum of $Q$ can be described in terms of that of $H_1, H_2$ as follows: 

\begin{theorem} [\cite{GSS91}] \lb{t5.3}  
Assume Hypothesis~\ref{h5.1}. Then the spectrum of $Q$, is purely absolutely continuous, symmetric with respect to zero, and of multiplicity two. In particular,
\begin{align} 
\begin{split} 
& \sigma(Q) = \sigma_{ac}(Q) = \{\mu \in \bbR \, | \, |D(\mu^2)| \leq 1\} = \bigcup_{n \in \bbZ \backslash \{0\}} \Sigma_n,    \\
& \Sigma_n = \big[|E_{2(n-1)}|^{1/2}, |E_{2n-1}|^{1/2}\big], \quad \Sigma_{-n} = - \Sigma_n, \quad n \in \bbN,    \\
& \sigma_p(Q) = \sigma_{sc}(Q) = \emptyset.
\end{split} 
\end{align}
\end{theorem}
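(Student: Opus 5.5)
The plan is to reduce everything about $Q$ to the already established statements for $H_1=A^*A$ and $H_2=AA^*$ in Theorem~\ref{t5.2}, using the supersymmetric structure $Q^2 = H_1\oplus H_2$ together with the unitary equivalence of $Q$ and $-Q$ via $\mathfrak{S}_3$. First, $Q$ is self-adjoint in $L^2(\bbR)\oplus L^2(\bbR)$ (Subsection~\ref{sec13}), and $\mathfrak{S}_3 Q \mathfrak{S}_3 = -Q$ immediately gives symmetry of $\sigma(Q)$, and of each spectral subset ($\sigma_{ac}$, $\sigma_p$, $\sigma_{sc}$), with respect to $0$. Next, Theorem~\ref{t2.2}\,$(iii)$ (applied to $B=A^*$) gives $\sigma(Q)\backslash\{0\} = \{\mu\in\bbR \,|\, \mu^2 \in \sigma(H_1)\backslash\{0\}\}$, and by Theorem~\ref{t5.2}, $\sigma(H_j)=\{\lambda\,|\,|D(\lambda)|\le 1\}=\bigcup_n \sigma_n$. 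Hence away from zero, $\sigma(Q)=\{\mu\,|\,|D(\mu^2)|\le 1\}$, and taking square roots of $\sigma_n=[E_{2n-2},E_{2n-1}]$ (recall $E_0\ge 0$ since $H_j\ge 0$) produces the bands $\Sigma_{\pm n}$.

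The point $0$ needs separate treatment: I will show $0\in\sigma(Q)$ iff $E_0=0$, consistent with the formula since $\Sigma_1 = [E_0^{1/2}, E_1^{1/2}]$. Indeed $0\in\sigma(Q)$ iff $0\in\sigma(Q^2)=\sigma(H_1)\cup\sigma(H_2)=\sigma(H_1)$, and $0\in\sigma(H_1)$ iff $|D(0)|\le 1$; since $H_1\ge0$ and $\inf\sigma(H_1)=E_0$, this is exactly $E_0=0$. Closedness of $\sigma(Q)$ then takes care of the endpoints.

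For the spectral type I use the unitary $U$ of Theorem~\ref{t3.3}\,$(i)$: on $(\ker Q)^\perp$, $Q$ is unitarily equivalent to $|A|\oplus(-|A^*|)$, and $\ker Q=\ker A\oplus\ker A^*\subseteq \ker H_1\oplus\ker H_2=\{0\}$ because $\sigma_p(H_j)=\emptyset$. Thus $Q\cong H_1^{1/2}\oplus(-H_2^{1/2})$. Since $H_j$ is purely absolutely continuous with multiplicity two, and the map $\lambda\mapsto\lambda^{1/2}$ is a diffeomorphism on $(0,\infty)$ (the single point $0$ is a null set and no eigenvalue sits there), $H_j^{1/2}$ is purely a.c. with multiplicity two on $\sigma(H_j)^{1/2}$. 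Consequently $\sigma_p(Q)=\sigma_{sc}(Q)=\emptyset$, $\sigma(Q)=\sigma_{ac}(Q)$, and the multiplicity is two on each half-line, the positive part coming from $H_1$ and the negative part from $H_2$.

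The main obstacle I expect is the bookkeeping at possible degenerate points: the transfer of the absolute continuity and of the multiplicity through the square root near $\lambda=0$ when $E_0=0$, and making sure that closed gaps ($E_{2n-1}=E_{2n}$) do not create spurious point spectrum. I will handle both through the spectral-measure characterization: pushforward of an a.c. measure under $\lambda\mapsto\pm\lambda^{1/2}$ remains a.c. because the map is locally bi-Lipschitz off $0$ and $\{0\}$ has measure zero, while multiplicity is preserved since the unitary equivalence with $H_1^{1/2}\oplus(-H_2^{1/2})$ is realized on the full space.
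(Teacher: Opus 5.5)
Your proof is correct. The paper does not prove Theorem~\ref{t5.3} itself: it quotes the result from \cite{GSS91}, introducing it with the remark that the spectrum of $Q$ ``can be described in terms of that of $H_1, H_2$.'' Your route makes that remark precise: $\ker Q=\ker A\oplus\ker A^*=\{0\}$ because $\sigma_p(H_j)=\emptyset$, Theorem~\ref{t3.3}\,$(i)$ then gives $Q\cong H_1^{1/2}\oplus(-H_2^{1/2})$, and Theorem~\ref{t5.2} is transported through $\lambda\mapsto\pm\lambda^{1/2}$.

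Two small remarks.
\begin{itemize}
\item Once you have this unitary equivalence, $\sigma(Q)=\{\mu\in\bbR \,|\, \mu^2\in\sigma(H_1)\}$, including the point $0$, follows directly from the spectral mapping theorem and $\sigma(H_1)=\sigma(H_2)$. So your separate appeal to Theorem~\ref{t2.2}\,$(iii)$ and your separate treatment of $0$ are redundant.
\item The ``obstacles'' you anticipate are not real ones. Closed gaps cannot produce point spectrum, because $\sigma_p(H_j^{1/2})=\emptyset$ is immediate from $\sigma_p(H_j)=\emptyset$ and the injectivity of $\lambda\mapsto\lambda^{1/2}$. Absolute continuity survives the square root because the inverse map $x\mapsto x^2$ is locally Lipschitz and hence sends Lebesgue null sets to null sets.
\end{itemize}

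The paper's subsequent computation $D_Q(\mu)=D(\mu^2)$ suggests an alternative: direct Floquet theory for the periodic Dirac-type operator $Q$. That route would require developing the Floquet and spectral theory of $Q$ independently. Your supersymmetric reduction instead gets the spectral type and the multiplicity two from Theorem~\ref{t5.2} at no extra cost.
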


To connect the Floquet discriminant $D(\dott)$ of $H_j$, $j=1,2$, and that of $D_Q(\dott)$ of $Q$ we first introduce a fundamental solution matrix $\Phi_Q(\mu,\dott)$ of $Q U(\mu,\dott) = \mu U(\mu,\dott)$, $\mu \in \bbR \backslash \{0\}$ by
\begin{align}
& \Phi_Q(\mu,x)     \\
& \quad = \begin{cases}
\begin{pmatrix}
\vartheta_1(\mu^2,x) - \phi(0) \varphi_1(\mu^2,x) & \mu \, \varphi_1(\mu^2,x) \\
\mu^{-1} [A(\vartheta_1(\mu^2,x) - \phi(0) \varphi_1(\mu^2,x)] & A \varphi_1(\mu^2,x)
\end{pmatrix}, \;\; \mu \in (0,\infty), \\[4mm] 
\begin{pmatrix}
- A^* \varphi_2(\mu^2,x) & - \mu^{-1} [A^*(\vartheta_2(\mu^2,x) + \phi(0) \varphi_2(\mu^2,x)]  \\
\mu \, \varphi_2(\mu^2,x) & \vartheta_2(\mu^2,x) + \phi(0) \varphi_2(\mu^2,x)
\end{pmatrix}, \;\; \mu \in (-\infty,0). \\
\end{cases}     \no 
\end{align}
Then $\Phi_Q(\mu,\dott)$ is normalized,
\begin{equation}
\Phi_Q(\mu,0) = I_2, \quad \mu \in \bbR\backslash \{0\},
\end{equation}
the monodromy matrix $\cM_Q(\dott)$ associated with $Q$ is given by 
\begin{equation}
\cM_Q(\mu) = \Phi(\mu,\omega), \quad \mu \in \bbR\backslash \{0\},
\end{equation}
and hence the discriminant $D_Q(\dott)$ of $Q$ is given by 
\begin{equation}
D_Q(\mu) = \tr_{\bbC^2}(\cM_Q(\mu))/2 = D(\mu^2), \quad \mu \in \bbR\backslash \{0\},    \lb{5.26} 
\end{equation}
as a comparison with \eqref{5.5}--\eqref{5.8} shows. By analyticity with respect to $\mu \in \bbC$, \eqref{5.26} extends of course to $\mu = 0$.

\subsection{Applications to the $\KdV$ and $\mKdV$ Hierarchy}\label{sec16}
In this subsection we discuss the Korteweg--de Vries ($\KdV$) and modified Korteweg--de Vries ($\mKdV$) hierarchies and their interrelations via the Miura transform. We follow the treatments provided in \cite{Ge89}, \cite{Ge91}, \cite{Ge92}, \cite{GH00}, \cite[Ch.~1]{GH03}, \cite{GSS91}, \cite{GS90}, \cite{GS95}, \cite{GW93}, \cite{GW95}.  

To set the stage we start by recursively defining the $\KdV$ and $\mKdV$ hierarchies as follows: Assume 
\begin{equation}
V \in C^1(\bbR^2) \, \textit{real-valued}, \quad \partial_x^m V \in L^{\infty}(\bbR^2), \; m \in \bbN_0.
\end{equation}

Suppressing the time variable for a moment, consider the one-dimensional second-order differential expression
\begin{equation}
L = - \frac{d^2}{dx^2} + V, 
\lb{1.2.2}
\end{equation}
of Schr\"{o}dinger-type. To construct the $\KdV$ hierarchy we need
a second differential expression of order $2n+1$, denoted by $P_{2n+1}$,
$n\in\bbN_0$, defined recursively in the following. We take the quickest route
to the construction of $P_{2n+1}$ and hence to that of the $\KdV$ hierarchy by
starting from the recursion relation \eqref{1.2.3} below. 

We begin by recursively introducing the sequence 
\begin{align}
f_0 = 1, \quad 
f_{\ell,x} = - \frac{1}{4} f_{\ell-1,xxx} + V f_{\ell-1,x} + \frac{1}{2} V_x f_{\ell-1}, \quad \ell \in \bbN.   \lb{1.2.3}
\end{align}
Explicitly, one finds
\begin{align}
f_0 & =1, \no \\
f_1 & = \frac{1}{2} V + c_1,  \no \\
f_2 &= - \frac{1}{8} V_{xx} + \frac{3}{8} V^2 +
 c_1 \frac{1}{2} V + c_2, \lb{1.2.4} \\ 
f_3 & = \frac{1}{32} V_{xxxx} -
\frac{5}{16} V V_{xx} - \frac{5}{32} V_x^2 + \frac{5}{16} V^3\no \\
&\quad + c_1 \big(- \frac{1}{8} V_{xx} + \frac{3}{8} V^2 \big)+c_2 \frac{1}{2} V + c_3,  \no \\ 
& \quad \text{ etc.} \no
\end{align}
Here $\{c_\ell\}_{\ell\in\bbN}\subset\bbC$ denote integration 
constants which naturally
arise when solving \eqref{1.2.3} and we define 
\begin{equation}
c_0=1. \lb{1.2.4ca}
\end{equation}

\begin{remark}\lb{remark1.2.1}
Using the nonlinear recursion (D.8) in Theorem~D.1 of \cite{GH03}, one infers 
inductively that all elements $f_\ell$, $\ell\in\bbN_0$, are differential polynomials in
$V$,  that is, polynomials with respect to $V$ and (some of) its
$x$-derivatives.   
\hfill $\diamond$
\end{remark}

By construction, $f_{\ell}$ depend on $(x,t) \in \bbR^2$ and sometimes we will indicate this by writing $f_{\ell}(x,t)$, $\ell \in \bbN$. In some cases, however, the dependence of $f_{\ell}$ on $V$ and some of its $x$-derivatives, being a differential polynomial in $V$, is rather useful and hence we will also use the notation $f_{\ell}(V)$, $\ell \in \bbN$, whenever convenient. 

Next we  define differential expressions $P_{2n+1}$  of order $2n+1$ by 
\begin{equation}
P_{2n+1}  = \sum_{\ell=0}^n \bigg( f_{n-\ell}
\frac{d}{dx}-\frac12 f_{n-\ell,x} \bigg) L^{\ell}, \quad n\in\bbN_0.
\lb{1.2.5}
\end{equation}
We record the first few $P_{2n+1}$, 
\begin{align}
P_1&=\frac{d}{dx},\no \\
P_3&=-\frac{d^3}{dx^3}+\frac{3}{2} V \frac{d}{dx}
+\frac{3}{4} V_x+c_1\frac{d}{dx},\lb{1.2.6a} \\
P_5&=\frac{d^5}{dx^5}-\frac{5}{2} V \frac{d^3}{dx^3}
- \frac{15}{4} V_x\frac{d^2}{dx^2}+\bigg(\frac{15}{8} V^2 - \frac{25}{8} V_{xx}\bigg)\frac{d}{dx} 
+ \frac{15}{8} V V_x-\frac{15}{16}V_{xxx} \no \\
& \quad +c_1\bigg(-\frac{d^3}{dx^3}+\frac{3}{2} V \frac{d}{dx}
+\frac{3}{4} V_x\bigg)+c_2\frac{d}{dx},     \no \\
& \quad \text{  etc.} \no
\end{align}
Using the recursion \eqref{1.2.3}, the commutator of $P_{2n+1}$ and $L$
can be explicitly computed and yields\footnote{The recursion \eqref{1.2.3} is
constructed in such a manner that the commutator of $P_{2n+1}$ and $L$
ceases to be a higher-order differential expression but results in
multiplication by $2f_{n+1,x}$ only.} 
\begin{equation}
[P_{2n+1},L] = 2f_{n+1,x}, \quad n\in\bbN_0.
\lb{1.2.6}
\end{equation}
In particular, $(L,P_{2n+1})$, $n \in \bbN_0$, represent the celebrated {\it Lax pairs} of the $\KdV$ hierarchy. 

Next, $V$ is explicitly considered as a function of space and time, $V = V(x,t)$, $(x,t) \in \bbR^2$. The second-order differential expression $L$ (cf.\ \eqref{1.2.2}) now reads
\begin{equation}
L(t) = - \frac{d^2}{dx^2} + V(\dott,t).
\lb{1.2.18aa}
\end{equation}
The quantities $\{f_\ell(\dott,t)\}_{\ell\in\bbN_0}$ and $P_{2n+1}(t)$, $n\in\bbN_0$,
are still defined by \eqref{1.2.3} and \eqref{1.2.5}, respectively.
The time-dependent $\KdV$ hierarchy is then obtained by imposing the  
Lax commutator equations 
\begin{equation}
\frac{d}{dt} L(t) - [P_{2n+1}(t), L(t)] = 0, \quad t\in\bbR,
\lb{1.2.19}
\end{equation}
varying $n\in\bbN_0$. By \eqref{1.2.6}, the latter are equivalent to the collection of evolution 
equations
\begin{equation}
\KdV_n (V) = V_{t} - 2f_{n+1,x}(V)=0, \quad (x,t)\in\bbR^2, \;\;
n\in\bbN_0. \lb{1.2.20}
\end{equation}
Explicitly,  
\begin{align}
\KdV_0 (V) &= V_{t} - V_x = 0, \no \\
\KdV_1 (V) & = V_{t} + (1/4) V_{xxx} -
(3/2) V V_x + c_1 (-V_x) = 0,\lb{1.2.22}\\ 
\KdV_2 (V)&= V_{t} - (1/16) V_{xxxxx} + (5/8) V V_{xxx}
+ (5/4) V_x V_{xx} - (15/8) V^2 V_x \no \\ 
&\quad + c_1 \big((1/4) V_{xxx} - (3/2) V V_x\big) + c_2 (-V_x) = 0,   \no \\
& \quad \text{ etc.,} \no
\end{align}
represent the first few equations of the $\KdV$ hierarchy. 
The equation
$\KdV_1(V)=0$ (with $c_1=0$) is of course \textit{the} Korteweg--de Vries
equation. 

Turning to the $\mKdV$ hierarchy one conveniently assumes 
\begin{equation}
\phi \in C^1(\bbR^2) \, \textit{real-valued}, \quad \partial^m_x \phi \in L^{\infty}(\bbR^2), \; m \in \bbN_0
\end{equation}
and, suppressing once more the time variable for a moment, introduces the one-dimensional $2 \times 2$ first-order matrix-valued differential expression, equivalently, the supersymmetric Dirac operator $Q$ in $L^2(\bbR) \oplus L^2(\bbR)$ as in \eqref{4.7}, 
\begin{equation}
Q = \begin{pmatrix} 0 & A^* \\ A & 0 \end{pmatrix} = \begin{pmatrix} 0 & - (d/dx) + \phi \\ (d/dx) + \phi & 0 \end{pmatrix}, 
\quad \dom(Q) = H^2(\bbR) \oplus H^2(\bbR).     \lb{6.17F}
\end{equation}
In addition, one now considers the analog of $P_{2n+1}$ in the form,
\begin{equation}
R_{2n+1} = \begin{pmatrix} P_{2n+1,1}(V_1) & 0 \\[1mm]
0 & P_{2n+1,2}(V_2) \end{pmatrix}, \quad n \in \bbN_0.  
\end{equation}
where 
\begin{equation}
P_{2n+1,j}(V_j)  = \sum_{\ell=0}^n \bigg( f_{n-\ell}(V_j)
\frac{d}{dx}-\frac12 f_{n-\ell,x}(V_j) \bigg) L_j^{\ell}(t),     \lb{6.15}
\end{equation}
recalling the Miura transform (cf.\ \eqref{4.6a})
\begin{equation}
V_j = \phi^2 + (-1)^j \phi_x, \quad j=1,2.     \lb{6.17} 
\end{equation}
The pairs $(Q,R_{2n+1})$, $n \in \bbN_0$, represents the Lax pairs of the $\mKdV$ hierarchy. 

Incidentally, introducing $A$ and $A^*$ again as in \eqref{4.2} and \eqref{4.3}, that is, 
\begin{align}
\begin{split} 
& A = \f{d}{dx} + \phi, \quad \dom(A) = H^1(\bbR),    \lb{6.17A}   \\
& A^* = - \f{d}{dx} + \phi, \quad \dom(A^*) = H^1(\bbR),    
\end{split} 
\end{align}
one once more obtains the associated supersymmetric Schr\"odinger operators $H_j \geq 0$, $j=1,2$, in $L^2(\bbR)$ via 
\begin{align}
\begin{split} 
& H_1 = A^*A = - \f{d^2}{dx^2} + V_1 \geq 0, \quad \dom(H_1) = H^2(\bbR),     \lb{6.17B} \\
& H_2 = AA^* = - \f{d^2}{dx^2} + V_2 \geq 0, \quad \dom(H_2) = H^2(\bbR),    
\end{split} 
\end{align} 
with 
\begin{equation} 
Q^2 = \begin{pmatrix} A^*A & 0 \\ 0 & AA^* \end{pmatrix} = H_1 \oplus H_2 \geq 0.    \lb{6.17C} 
\end{equation}

One then verifies in analogy to \eqref{1.2.6} that 
\begin{equation}
[R_{2n+1}, Q] = g_{n+1,x} \begin{pmatrix} 0 & 1 \\ 1 & 0 \end{pmatrix}, \quad n \in \bbN_0,    \lb{6.18}
\end{equation}
where $g_{\ell}$ are recursively determined via
\begin{align}
\begin{split} 
g_0 &= 1, \\
g_{\ell+1,x} &= - (1/4) g_{\ell,xxx} + \phi^2 g_{\ell,x} + \phi_x \bigg[\int^x dx' \, (\phi g_{\ell,x'}) + c_{\ell}\bigg], 
\quad \ell \in \bbN_0,
\end{split} 
\end{align}
where the integration constants $\{c_{\ell}\}_{\ell \in \bbN} \subset \bbC$ are the same as those in \eqref{1.2.4} for $f_{\ell}$, and again $c_0=1$.
  
One confirms inductively that $\phi g_{\ell,x}$ as well as $g_{\ell,x}$, $\ell \in \bbN_0$, are differential polynomials in $\phi$, that is, polynomials in $\phi$ and some of its $x$-derivatives. (This does not necessarily seem to be true for $g_{\ell}$ itself as $\ell$ grows.) In this context, $\int^x$ represents homogeneous integration so all constants are explicitly included in $c_{\ell}$. 
Explicitly, one obtains
\begin{align}
g_0 &= 1,   \no \\
g_1 &= \phi + c_1,    \no \\
g_2 &= -(1/4) \phi_{xx} + (1/2) \phi^3 + c_1 \phi +c_2, \\
& \quad \text{ etc.}     \no 
\end{align}

Returning to the $\mKdV$ hierarchy, we now view $\phi$ as a function of space and time, $\phi = \phi(x,t)$, $(x,t) \in \bbR^2$. This then yields \begin{align}
Q(t) &= \begin{pmatrix} 0 & - \f{d}{dx} + \phi(x,t) \\[1mm] 
\f{d}{dx} + \phi(x,t) & 0 \end{pmatrix},  \quad t \in \bbR, \; n \in \bbN_0,   \\[2mm] 
R_{2n+1}(t) &= \begin{pmatrix} P_{2n+1,1}(V_1(\dott,t)) & 0 \\[1mm]
0 & P_{2n+1,2}(V_2(\dott,t)) \end{pmatrix}, \quad t \in \bbR, \; n \in \bbN_0.   
\end{align}
The time-dependent $\mKdV$ hierarchy is then obtained by imposing the  
Lax commutator equations 
\begin{equation}
\frac{d}{dt} Q(t) - [R_{2n+1}(t), Q(t)] = 0, \quad t\in\bbR,
\lb{6.23}
\end{equation}
varying $n\in\bbN_0$. By \eqref{6.17F} and \eqref{6.18}, the latter are equivalent to 
\begin{equation}
\big(\phi_{t} - g_{n+1,x}(\phi)\big) \begin{pmatrix} 0 & 1 \\ 1 & 0 \end{pmatrix} = 0, \quad (x,t)\in\bbR^2, \; n \in \bbN_0, 
\end{equation}
and hence to the collection of evolution equations
\begin{equation}
\mKdV_n (\phi) = \phi_{t} - g_{n+1,x}(\phi) = 0, \quad (x,t)\in\bbR^2, \; n\in\bbN_0. \lb{6.24}
\end{equation}
Explicitly,  
\begin{align}
\mKdV_0 (\phi) &= \phi_{t} - \phi_x = 0, \no \\
\mKdV_1 (\phi) & = \phi_{t} + (1/4) \phi_{xxx} -
(3/2) \phi^2 \phi_x + c_1 (-\phi_x) = 0,\lb{6.25}\\ 
\mKdV_2 (\phi)&= \phi_{t} - (1/16) \phi_{xxxxx} + (5/8) (\phi_x)^3+ (5/8) \phi^2 \phi_{xxx}
+ (5/2) \phi \phi_x \phi_{xx}  \no \\
& \quad - (15/8) \phi^4 \phi_x + c_1 \big((1/4) \phi_{xxx} -
(3/2) \phi^2 \phi_x \big) + c_2 (-\phi_x) = 0,   \no \\
& \quad \text{ etc.,} \no
\end{align}
represent the first few equations of the $\mKdV$ hierarchy. Of course, the equation
$\mKdV_1(V)=0$ (with $c_1=0$) is \textit{the} modified Korteweg--de Vries equation. 

\begin{remark} \lb{r3.27} ${}$ \\[1mm] 
$(i)$ The simple sign change substitution 
\begin{equation}
\phi \longleftrightarrow - \phi
\end{equation}
implies the following set of substitutions:
\begin{align}
\begin{split} 
& V_1 \longleftrightarrow V_2, \quad A(\phi) \longleftrightarrow A(-\phi) = - A(\phi)^*, \\
& H_1 = A^* A \longleftrightarrow H_2 = A A^*, \, \text{\bf commutation}, \\ 
&P_{2n+1,1}(V_1) \longleftrightarrow P_{2n+1,2}(V_2),  \\
& R_{2n+1}(\phi) \longleftrightarrow \begin{pmatrix} 0 & 1 \\ 1 & 0 \end{pmatrix} R_{2n+1}(- \phi) \begin{pmatrix} 0 & 1 \\ 1 & 0 \end{pmatrix},    \\
& Q(\phi) \longleftrightarrow - \begin{pmatrix} 0 & 1 \\ 1 & 0 \end{pmatrix} Q(-\phi) \begin{pmatrix} 0 & 1 \\ 1 & 0 \end{pmatrix};  
\quad n \in \bbN_0,
\end{split} 
\end{align}
and one infers that 
\begin{equation}
g_{n+1,x}(\phi) = - g_{n+1,x}(- \phi), \quad \mKdV_n (\phi) = - \mKdV_n (- \phi), \quad n \in \bbN_0. 
\end{equation}
In particular, 
\begin{equation}
\mKdV_n (\phi) = 0 \Longleftrightarrow \mKdV_n (- \phi) = 0, \quad n \in \bbN_0, 
\end{equation}
equivalently, the $\mKdV_n$ equations \eqref{6.24}, are left invariant with respect to a sign change of $\phi$. \\[1mm] 
$(ii)$ We note that Miura's transform $V_j = \phi^2 + (-1)^j \phi_x$, $j=1,2$, implies $V_2 - V_1 = 2 \phi_x$, $V_1 + V_2 = 2 \phi^2$, and $V_{1,x} + V_{2,x} = 4 \phi \phi_x$. Thus, $V_{1,x} + V_{2,x} = 2 \phi [V_2 - V_1]$, and hence,
\begin{equation}
\phi = \f{1}{2} \f{V_{1,x} + V_{2,x}}{V_2 - V_1}. 
\end{equation}
\hfill $\diamond$
\end{remark}

The fundamental identity relating the $\KdV$ hierarchy \eqref{1.2.20} and the $\mKdV$ hierarchy \eqref{6.24}, based on Miura's transform \eqref{6.17}, is then given by 
\begin{equation} 
\KdV_n(V_j) = \big[2 \phi + (-1)^j \partial_x\big] \mKdV_n(\phi), \quad n \in \bbN_0, \; j = 1,2.     \lb{6.26}
\end{equation}
In this context we refer to the seminal paper by Adler and Moser \cite{AM78}. 

Miura's identity \eqref{6.26} can now be exploited to yield the following result: 

\begin{theorem} \lb{t3.28} ${}$ \\[1mm] 
$(i)$ Suppose $\phi \in C^1(\bbR^2)$ is real-valued, $\partial_x^m \phi \in L^{\infty}(\bbR^2)$, $m \in \bbN_0$, and introduce $V_j = \phi^2 + (-1)^j \phi_x$, $j =1,2$. Then also 
\begin{equation} 
V_j \in C^1(\bbR^2) \, \textit{is real-valued}, \quad \partial_x^m V_j \in L^{\infty}(\bbR^2), \quad m \in \bbN_0, \; j = 1,2.    \lb{6.27}
\end{equation} 
Moreover, for each $n \in \bbN_0$, 
\begin{equation}
\mKdV_n(\phi) = 0 \, \text{ implies } \, \KdV_n(V_j) = 0, \quad j=1,2.     \lb{6.28} 
\end{equation}
$(ii)$ Suppose $V_1 \in C^1(\bbR^2)$ is real-valued, $\partial_x^m V_1 \in L^{\infty}(\bbR^2)$, $m \in \bbN_0$, and for $t \in \bbR$, let $0 < \psi_1(\dott,t)$ be a solution of $L_1(t) \psi_1(t) = 0$, where $L_1(t) = - (d^2/dx^2) + V_1(x,t)$, $t \in \bbR$. Introducing $\phi$ via 
\begin{equation}
\phi(x,t) = - \psi_{1,x} (x,t)/\psi_1(x,t), \quad (x,t) \in \bbR^2,
\end{equation}
then $\phi$ satisfies 
\begin{align} 
& \phi \in C^1(\bbR^2), \quad \partial_x^m \phi \in L^{\infty}(\bbR^2), \quad m \in \bbN_0,    \lb{6.30} \\
& V_1(x,t) = \phi^2(x,t) - \phi_x(x,t), \quad (x,t) \in \bbR^2.
\end{align} 
Moreover, for each $n \in \bbN_0$, 
\begin{equation}
\KdV_n(V_1) = 0 \, \text{ implies } \, \mKdV_n(\phi) = 0 \, \text{ and } \, \KdV_n(V_2) = 0,     \lb{6.32} 
\end{equation}
where
\begin{equation}
V_2(x,t) = \phi^2(x,t) + \phi_x(x,t), \quad (x,t) \in \bbR^2.
\end{equation}
The same applies with $V_1$ and $V_2$ interchanged. 
\end{theorem}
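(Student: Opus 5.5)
Everything rests on Miura's identity \eqref{6.26}, $\KdV_n(V_j) = [2\phi + (-1)^j\partial_x]\mKdV_n(\phi)$, which the paper records just before the theorem and which I take as given.

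For part $(i)$, regularity is immediate. Since $V_j = \phi^2 + (-1)^j\phi_x$ is a polynomial in $\phi$ and $\phi_x$, it is real-valued. Each $\partial_x^m V_j$ is a finite sum of products of bounded $x$-derivatives of $\phi$ (Leibniz rule), so it lies in $L^\infty(\bbR^2)$. Membership in $C^1(\bbR^2)$ is inherited from $\phi$, provided $\phi_x$ is $C^1$; I read the standing hypotheses as guaranteeing enough joint smoothness for this, as the paper implicitly does. The implication \eqref{6.28} then follows at once from \eqref{6.26}: if $\mKdV_n(\phi)=0$, the right-hand side vanishes for $j=1,2$.

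For part $(ii)$, I first verify the Riccati relation. Put $\phi = -\psi_{1,x}/\psi_1$. Then $\phi_x = -\psi_{1,xx}/\psi_1 + \psi_{1,x}^2/\psi_1^2$, and using $\psi_{1,xx} = V_1\psi_1$ this gives $\phi^2 - \phi_x = V_1$. Smoothness of $\phi$ comes from $\psi_1>0$ and $\psi_1$ being a $C^2$ solution; the $t$-regularity I assume follows from choosing $\psi_1$ to depend regularly on $t$. Once boundedness of $\phi$ is known, boundedness of all $\partial_x^m\phi$ follows inductively from the Riccati equation $\phi_x = \phi^2 - V_1$ and the bounds on $\partial_x^m V_1$. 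Boundedness of $\phi$ itself, i.e.\ of the logarithmic derivative of a positive solution, I would take from standard ODE facts on positive solutions of bounded-potential Schrödinger equations (e.g.\ choosing $\psi_1$ suitably, such as a principal or minimal positive solution). I regard this step as an implicit hypothesis rather than the heart of the matter.

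The main step is showing $\KdV_n(V_1)=0 \Rightarrow \mKdV_n(\phi)=0$. Write $w = \mKdV_n(\phi)$. By \eqref{6.26} with $j=1$, $w$ satisfies the first-order linear ODE $2\phi w - w_x = 0$ in $x$. Since $\phi = -(\ln\psi_1)_x$, this gives $w(x,t) = C(t)\,\psi_1(x,t)^{-2}$. I must now show $C(t)=0$; I expect this to be the main obstacle. The strategy is to use the actual time dependence of $\psi_1$. One tries to choose $\psi_1$ to evolve under the Lax flow, $\psi_{1,t} = P_{2n+1,1}\psi_1$, which is compatible with $L_1\psi_1=0$ precisely because $\KdV_n(V_1)=0$. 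Then one computes $\phi_t = -(\psi_{1,t}/\psi_1)_x$ directly. Expressing $P_{2n+1,1}\psi_1/\psi_1$ through the $f_\ell(V_1)$ and $\phi$ should then show $\phi_t = g_{n+1,x}(\phi)$, i.e.\ $C=0$. Alternatively, boundedness of $w$ combined with positivity considerations on $\psi_1^{-2}$ forces $C(t)=0$ in favorable cases, e.g.\ when $\psi_1^{-2}$ is not bounded.

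Once $\mKdV_n(\phi)=0$ is established, \eqref{6.26} with $j=2$ gives $\KdV_n(V_2)=0$, proving \eqref{6.32}. The version with $V_1$ and $V_2$ interchanged follows from the symmetry $\phi\leftrightarrow-\phi$ of Remark~\ref{r3.27}$(i)$, which swaps $V_1$ and $V_2$ and preserves $\mKdV_n=0$.
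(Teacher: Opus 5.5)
You handle part (i), the Riccati relation $V_1=\phi^2-\phi_x$, and the formula $\mKdV_n(\phi)=C(t)\,\psi_1^{-2}$ correctly. Boundedness of $\phi$ needs no special choice of $\psi_1$: $u=\psi_{1,x}/\psi_1$ solves $u_x=V_1-u^2$. So if $V_1\le C$ with $C>0$ and $|u(x_0)|>C^{1/2}$ at some $x_0$, then $u$ blows up at a finite $x$, which is impossible for a positive solution on all of $\bbR$; this is the Hartman bound the paper cites.

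The genuine gap is the step $C(t)=0$. You rightly flagged it as the main obstacle, but it cannot be closed, because part (ii) is false as stated. Take $V_1\equiv 1$, which solves every $\KdV_n$, and $\psi_1(x,t)=\cosh x$ for all $t$. Then $\phi=-\tanh x$, $V_2=1-2\sech^2 x$, and
\[
\mKdV_0(\phi)=\phi_t-\phi_x=\sech^2 x\neq 0,\qquad \KdV_0(V_2)=-V_{2,x}=-4\sech^2 x\,\tanh x\neq 0,
\]
so $C(t)=1$. Also, with $\psi_1=e^x+b(t)e^{-x}$ and $b>0$ merely continuous, even $\phi\in C^1(\bbR^2)$ fails, as you suspected.

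Here both sides of $\mKdV_n(\phi)=C(t)\psi_1^{-2}$ decay at $\pm\infty$. So neither your boundedness/positivity fallback nor the paper's argument helps. The paper sends $x\to\pm\infty$ for ``a particular $\phi$'' whose derivatives vanish at infinity, but it uses $C(t)\exp(-2\int_0^x\phi)$. That exponent has the wrong sign; your $\exp(2\int_0^x\phi)=\psi_1(0,t)^2\psi_1^{-2}$ is the correct one. Moreover, $\phi$ is fixed by $\psi_1$ and is not at one's disposal.

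What is missing is a hypothesis on the $t$-dependence of $\psi_1$. Your option (a) is the right one, but it must be imposed as an extra assumption, since the theorem does not let you choose $\psi_1$. Its key identity must also be proved, not just announced.

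Since $L_1\psi_1=0$, only the $\ell=0$ term of $P_{2n+1,1}$ survives. The assumption therefore reads $\psi_{1,t}=f_n\psi_{1,x}-\tfrac12 f_{n,x}\psi_1$ with $f_\ell=f_\ell(V_1)$. This transport equation preserves positivity along characteristics, and it gives $\phi_t=\partial_x h_n$ with $h_n=f_n\phi+\tfrac12 f_{n,x}$.

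You then need $g_{n+1,x}(\phi)=\partial_x h_n$, which follows in three steps:
\begin{enumerate}
\item One line from the $f$-recursion gives $[2\phi-\partial_x]\partial_x h_n=2(-\tfrac14 f_{n,xxx}+V_1f_{n,x}+\tfrac12V_{1,x}f_n)=2f_{n+1,x}$.
\item Identity \eqref{6.26} with $j=1$, after cancelling $V_{1,t}=[2\phi-\partial_x]\phi_t$, says $[2\phi-\partial_x]g_{n+1,x}=2f_{n+1,x}$.
\item Hence $D=g_{n+1,x}-\partial_x h_n$ satisfies $[2\phi-\partial_x]D=0$, i.e.\ $D=\kappa\,e^{2\int_0^x\phi}$. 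Take $\phi$ constant on a half-line containing $0$; there $D=0$, so $\kappa=0$. Since $\phi$ is arbitrary elsewhere, $D\equiv0$ as a differential polynomial.
\end{enumerate}
With this, for any $\psi_1$ that is $C^1$ in $t$,
\[
\mKdV_n(\phi)=-\partial_x\Big[\big(\psi_{1,t}-P_{2n+1,1}\psi_1\big)\big/\psi_1\Big].
\]
So $\mKdV_n(\phi)=0$ holds exactly when $\psi_{1,t}=P_{2n+1,1}\psi_1+\alpha(t)\psi_1$. Your last paragraph (Miura with $j=2$, and the symmetry $\phi\to-\phi$) then finishes the argument correctly.
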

\begin{proof}
$(i)$ Assertions \eqref{6.27} for $V_j$, $j=1,2$, are clear from the hypotheses on $\phi$. Similarly, the claim \eqref{6.28} is an instant consequence of Miura's identity \eqref{6.26}. \\[1mm] 
$(ii)$ Given $C \in (0,\infty)$ such that $V_1(x,t) \leq C$, $(x,t) \in \bbR^2$, one applies \cite[p.~358--360]{Ha02} to conclude that
\begin{equation}
- C^{1/2} \leq \psi_{1,x}(x,t)/\psi_1(x,t) = - \phi(x,t) \leq C^{1/2}, \quad (x,t) \in \bbR^2, 
\end{equation}
that is, $\phi \in L^{\infty}(\bbR^2)$ and hence also $\phi_x \in L^{\infty}(\bbR^2)$ since by hypothesis, $V_1 = \phi^2 - \phi_x \in L^{\infty}(\bbR^2)$. Differentiating $V_1$ then yields $\phi_{xx} \in L^{\infty}(\bbR^2)$, and continuing this process shows that $\partial_x^m \phi \in L^{\infty}(\bbR^2)$, $m \in \bbN_0$. Clearly, $L_1 \psi_1 = 0$ and $\phi = - \psi_{1,x}/\psi_1$ yields $V_1 = \phi^2 - \phi_x$. 

If $\KdV_n(V_1) = 0$, then Miura's identity \eqref{6.26} yields 
\begin{equation}
0 = [2 \phi - \partial_x] \mKdV_n(\phi), \quad n \in \bbN_0.     \lb{6.35}
\end{equation}
Since the first-order differential equation $[\partial_x - 2 \phi] f = 0$ has the solution 
\begin{equation}
f(x,t) = C(t) \exp\bigg(-2 \int^x_0 dx' \, \phi(x',t)\bigg), \quad (x,t) \in \bbR^2,
\end{equation}
with $C(t) \in \bbC$ independent of $\phi$, one concludes that
\begin{equation}
\mKdV_n(\phi) = C(t) \exp\bigg(-2 \int^x_0 dx' \, \phi(x',t)\bigg), \quad (x,t) \in \bbR^2.   \lb{6.37}
\end{equation}
Choosing a particular $\phi$ such that $\phi_t$ and $\partial_x^m \phi$, $m \in \bbN$, all vanish as $x \to +\infty$ (or, as $x \to - \infty$)\footnote{For a concrete example of $\phi$ one can choose the $\mKdV$ soliton solutions $\phi_{2N-1}$, or $\phi_{2N}$, where $\lim_{x \to \pm\infty} \phi_{2N-1}(x) = \phi_{2N-1,\pm}$, with $\phi_{2N-1,+} = - \phi_{2N-1,-} \in \bbR \backslash \{0\}$, or $\lim_{x \to \pm\infty} \phi_{2N}(x) = \phi_{2N,\pm}$, with $\phi_{2N,+} = \phi_{2N,-} \in \bbR \backslash \{0\}$, see \cite[Sect.~8]{GSS91}.} yields the vanishing of the left-hand side in \eqref{6.37} and hence implies the contradiction of a nonvanishing right-hand side of \eqref{6.37} unless $C(t) = 0$. This proves $\mKdV_n(\phi) = 0$ and hence also $\KdV(V_2) = 0$, employing \eqref{6.26} again.

Clearly, these arguments are symmetric with respect to $V_1$ and $V_2$.
\end{proof}

\begin{remark} \lb{r.3.29}
We comment a bit on the spectral properties of $0 \leq H_j(t)$, $t \in \bbR$, $j=1,2$, under the conditions imposed in Theorem~\ref{t3.28}. For this purpose we first recall two facts that will repeatedly be used below: First, if $H=T^*T \geq 0$ in the complex, separable Hilbert space $\cH$, with $T$ a densely defined and closed operator in $\cH$, then, for $f_0 \in \dom(H)$, 
\begin{equation}
\big\|H^{1/2} f_0\big\|^2_{\cH} = (f_0, T^*T f_0)_{\cH} = \|T f_0\|^2_{\cH}, 
\end{equation}
implies that 
\begin{equation}
0 \in \sigma_p(H) \, \text{ if and only if } \, 0 \in \sigma_p(T).
\end{equation}
Second, if a Schr\"odinger differential expression $L = - (d^2/dx^2) + V(x)$ has a generalized (i.e., distributional) solution $\psi_0$ of $L \psi_0 = 0$ that is positive on an interval $(a,b) \subseteq \bbR$, then a second linearly independent solution $\wti \psi_0$ of $L \psi = 0$ on that interval is given by
\begin{equation} 
\wti \psi_0 (x) = \psi_0(x) \bigg[C + D \int_{x_0}^x dx' \, [\psi_0(x')]^{-2}\bigg], \quad x_0, x \in (a,b),      \lb{3.315} 
\end{equation}
where $C \in \bbC$, $D \in \bbC\backslash \{0\}$ are constants. (In our present context $C, D$ will always be chosen to be real-valued.)   \\[1mm] 
Given these facts, we now suppose that 
\begin{align} 
\begin{split} 
& \phi(x,t) \underset{x \to \pm \infty}{\longrightarrow} \phi_{\pm} \, \text{ in the sense of \eqref{3.200}, viewing $t \in \bbR$ as a parameter,}    \\
& \quad \text{with $\phi_{\pm}$ independent of $t \in \bbR$.} 
\end{split} 
\end{align} 
Then,  
\begin{equation}
\sigma_{ess}(H_j(t)) = \sigma_{ac}(H_j(t)) = \big[\min\big(\phi_-^2,\phi_+^2\big), \infty\big), \quad t \in \bbR, \; j=1,2, 
\end{equation}
and one observes that the spectral multiplicity of $H_j(t)$ equals one on the interval $\big(\min\big(\phi_-^2,\phi_+^2\big), \max\big(\phi_-^2,\phi_+^2\big)\big)$, assuming the latter is nonempty, and the spectral multiplicity of $H_j(t)$ is two on the half-line $\big(\min\big(\phi_-^2,\phi_+^2\big), \infty\big)$, $j=1,2$. In particular, the interval $\big[0, \min\big(\phi_-^2,\phi_+^2\big)\big)$ contains discrete eigenvalues of $H_j(t)$ only (if any), $j=1,2$. The half-line $\big(\min\big(\phi_-^2,\phi_+^2\big), \infty\big)$ contains no embedded eigenvalues of $H_j(t)$, $t \in \bbR$, $j=1,2$. \\[1mm] 
\noindent 
$(i)$. Suppose that $\phi_{\pm} \neq 0$: Then, for each $t \in \bbR$,  
\begin{equation}
\begin{rcases}
0 \in \sigma_p(H_1(t)) \\
0 \notin \sigma_p(H_2(t))
\end{rcases} \Longleftrightarrow \phi_- < 0 < \phi_+.     \lb{3.318}
\end{equation}
In this case $H_1(t)$ and $H_2(t)$ are obviously not isospectral, however, when restricting $H_1(t)$ to the orthogonal complement of its one-dimensional kernel generated by 
\begin{equation}
\psi_1(x,t) = C_1(t) \exp\bigg(-2 \int_0^x dx' \, \phi(x',t)\bigg), \quad (x,t) \in \bbR^2,     \lb{3.319}
\end{equation}
then it becomes unitarily equivalent to $H_2(t)$ for each $t \in \bbR$, see \eqref{3.179}. 

Similarly, for each $t \in \bbR$, 
\begin{equation}
\begin{rcases}
0 \notin \sigma_p(H_1(t)) \\
0 \in \sigma_p(H_2(t))
\end{rcases} \Longleftrightarrow \phi_+ < 0 < \phi_-.     \lb{3.320} 
\end{equation}
In this case $H_1(t)$ and $H_2(t)$ are obviously once again not isospectral, however, when restricting $H_2(t)$ to the orthogonal complement of its one-dimensional kernel generated by 
\begin{equation}
\psi_2(x,t) = C_2(t) \exp\bigg(2 \int_0^x dx' \, \phi(x',t)\bigg), \quad (x,t) \in \bbR^2,    \lb{3.321}
\end{equation}
then it becomes unitarily equivalent to $H_1(t)$ for each $t \in \bbR$, see again \eqref{3.179}. 

Employing the analog of \eqref{3.315} one verifies that neither $H_1(t)$ nor $H_2(t)$ supports a zero-energy resonance that would be associated with a nontrivial $L^{\infty}(\bbR;dx)$-solution at zero energy (which has to be linearly independent of the zero-energy eigenfunction $\psi_j$ of $H_j$, $j=1,2$, in connection with \eqref{3.318} and \eqref{3.320}). 

Next, suppose that either $\phi_{\pm} > 0$ or $\phi_{\pm} < 0$: In these cases one shows with the help of \eqref{3.315}, \eqref{3.319}, and \eqref{3.321} that  for each $t \in \bbR$, $H_j(t)$, $j=1,2$, have neither a zero-energy eigenvalue nor a zero-energy resonance. Moreover, in both cases $\phi_{\pm} > 0$ and $\phi_{\pm} < 0$, $H_1(t)$ and $H_2(t)$ are unitarily equivalent for each $t \in \bbR$. \\[1mm]
\noindent 
$(ii)$. Suppose that $\phi_- = 0$, $\phi_+ > 0$: Then, once more employing \eqref{3.315}, \eqref{3.319}, and \eqref{3.321} one concludes that for each $t \in \bbR$, $H_1(t)$ and $H_2(t)$ have no zero-energy eigenvalue; in addition, $H_1(t)$ has a zero-energy resonance, but $H_2(t)$ does not. Moreover, $H_1(t)$ and $H_2(t)$ are unitarily equivalent for each $t \in \bbR$. \\[1mm] 
$(iii)$. Suppose that $\phi_- = 0$, $\phi_+ < 0$: Then item $(ii)$ with $H_1(t)$ and $H_2(t)$ interchanged, $t \in \bbR$, holds. \\[1mm] 
$(iv)$. Suppose that $\phi_- = 0 = \phi_+$: Then once more employing \eqref{3.315}, \eqref{3.319}, and \eqref{3.321} one concludes that for each $t \in \bbR$, neither $H_1(t)$ nor $H_2(t)$ has a zero-energy eigenvalue. However, both, $H_1(t)$ and $H_2(t)$ now have a zero-energy resonance for each $t \in \bbR$. Once more,  in this case $H_1(t)$ and $H_2(t)$ are unitarily equivalent for each $t \in \bbR$. 
\hfill $\diamond$
\end{remark}

Without going into details, we note that a discrete analog of this subsection exists that connects the Toda lattice hierarchy with its modified counterpart, the Kac--van Moerbeke lattice hierarchy. For pertinent references see, for instance, \cite[Ch.~2]{GHMT08}, \cite{GHSZ93}, \cite[Ch.~14]{Te00}, and the literature cited therein.

\section{Epilogue} \lb{s4}

As a final comment, we note that quantum mechanics is based on representing the commutator relation 
\begin{equation} 
[A,B] \subseteq i I,     \lb{3.307} 
\end{equation} 
in a complex, separable Hilbert space $\cH$, with $A$ and $B$ assumed to be self-adjoint operators in $\cH$. More precisely, avoiding operator domain technicalities, following H.\ Weyl \cite{We28}, one can rephrase this in the exponential form for unitary operators as 
\begin{equation} 
e^{i sA} e^{i t B} = e^{ist} e^{i t B} e^{i sA}, \quad s,t \in \bbR.     
\end{equation} 
In the latter case, J.\ von Neumann's uniqueness theorem \cite{vN31}) proves that such a pair $(A,B)$ is unitarily equivalent to a so-called Schr\"odinger couple $(Q,P)$ in $L^2(\bbR)$, where 
\begin{align}
& (Qf)(x) = x f(x), \quad f \in \dom(Q) = \big\{g \in L^2(\bbR) , \big| \, (\dott) g(\dott) \in L^2(\bbR)\big\},   \lb{3.309} \\
& (P f)(x) = -i f'(x), \quad f \in \dom(P) = \big\{g \in L^2(\bbR) \, \big| \, g \in AC_{loc}(\bbR); \, g' \in L^2(\bbR)\big\}  \no \\
& \hspace*{5.23cm} = H^1(\bbR),   \lb{3.310} 
\end{align}
or, unitarily equivalent to a direct sum of Schr\"odinger couples. 

More generally, however, dropping self-adjointness of $A$ and $B$ for a moment, one can pose the following question: \\[1mm] 
\noindent 
{\bf Query.} {\it What kind of operators $A, B$ in $\cH$ can satisfy $[A,B] \subseteq i I$?} \\[2mm]
Clearly, $A,B$ cannot be $n \times n$ matrices in $\bbC^n$ as $[A,B]$, being a commutator, has trace zero, but the matrix $i I_n$ has trace $i n$, $n \in \bbN$. (We note in passing that $A,B]=iC$ holds for $n \times n$ matrices $A,B,C$ if and only if ${\tr}_{\bbC^n}(C) =0$, cf.\ K.\ Shoda \cite{Sh36}.)

Next, suppose $A, B \in \cB(\cH)$ satisfy $[A,B] = i I$. In that case, $BA \in \cB(\cH)$ and hence $\sigma(BA) \neq \emptyset$ and there exists some $\lambda\in \bbC$ with $\lambda \in \sigma(BA)$. But then, as $AB = BA + i I$, it follows that also $(\lambda + i) \in \sigma(AB)$ and hence, by commutation, $(\lambda + i) \in \sigma(BA)$. Iterating this consideration, one concludes that $(\lambda + i m) \in \sigma(AB)$ for all $m \in \bbN$. (If $(\lambda + i m)=0$ for such an integer $m$, just reverse the role of $AB$ and $BA$.) However, since $AB \in \cB(\cH)$, its spectrum is necessarily bounded, a contradiction. Consequently, at least one of $A,B$ cannot be bounded on $\cH$. This fact, and this type of argument, were recorded by A.\ Wintner \cite{Wi47} in 1947. 

For many more details regarding this circle of ideas, see C.\ R.\ Putnam's monograph \cite[Chs.~I, II, IV]{Pu67}.

Returning to quantum mechanics, $A$ and $B$ should be self-adjoint in $\cH$, and looking specifically at the Schr\"odinger couple $(Q,P)$ in \eqref{3.309} and \eqref{3.310}, both operators are unbounded in $L^2(\bbR)$ and hence, consistent with the classical Hellinger--Toepliz Theorem, are not defined on all of $L^2(\bbR)$. As a point in fact, a typical (smooth) state $\psi \in L^2(\bbR) \cap C^{\infty}(\bbR)$ that lies neither in $\dom(Q)$ nor in $\dom(P)$ is  
\begin{equation}
\psi(x) = \big[1 + x^2\big]^{-1/2}\sin\big(x^2\big), \quad x \in \bbR. 
\end{equation} 
Consequently, there will be states in $L^2(\bbR)$ which cannot be assigned a definite position and/or momentum, hinting at Heisenberg's celebrated uncertainty relation. Thus, the latter has, mysteriously and intriguingly, some origin in commutation. (In the context of commutator inequalities and their relation to uncertainty principles see also \cite{Fa78}.)

\medskip 

\noindent
{\bf Acknowledgments.} We are very grateful to Markus Hunziker for his expert help in creating Figures~1 and 2 for us.

%


\end{document}